\newcommand{\dbar}[1]{\overline{\overline{#1}}}
\newcommand{\sbar}[1]{\overline{#1}}
\newcommand{\agr}[2][{}]{{{#2}^{\pmb g}_{#1}}}
\def\maxgendeg{\operatorname{maxgd}}
\def\mgd{\maxgendeg} 
\def\HF{\operatorname{HF}}
\def\HS{\operatorname{HS}}
\def\socle{\operatorname{socle}}
\def\ann{\operatorname{ann}}
\def\p{\oplus}
\def\a{\mathfrak a}
\def\b{\mathfrak b}
\def\id{\operatorname{id}}
\def\kk{\pmb k}
\def\m{\mathfrak m}
\def\syz{\operatorname{syz}}
\def \w {\wedge}
\def\t{\otimes}
\def\Sym{\operatorname{Sym}}
\def\im{\operatorname{im}}
\def\Tor{\operatorname{Tor}}
\def\Hom{\operatorname{Hom}}
\def\Ext{\operatorname{Ext}}
\def\HH{\operatorname{H}}
\def\n{\mathfrak n}
\def\Fitt{\operatorname{Fitt}}
\def\tsd{\operatorname{topdeg}}
\def\topdeg{\tsd}
\def\M{\mathfrak M}
\def\s{d}
\newcounter{nameOfYourChoice}
\newtheorem{theorem}{Theorem}[section]
\newtheorem{lemma}[theorem]{Lemma}
\newtheorem{proposition}[theorem]{Proposition}
\newtheorem{proposition-no-advance}[equation]{Proposition}
\newtheorem{observation}[theorem]{Observation}
\newtheorem{quick consequences}[theorem]{Quick Consequences}
\newtheorem{hypothesis-no-advance}[equation]{Hypothesis}
\newtheorem{claim-no-advance}[equation]{Claim}
\newtheorem{observation-no-advance}[equation]{Observation}
\theoremstyle{definition}
\newtheorem{facts and definitions}[theorem]{Facts and Definitions}
\newtheorem{definition-no-advance}[equation]{Definition}
\newtheorem{setup}[theorem]{Setup}
\newtheorem{get to work}[theorem]{Get to work}
\newtheorem{remark}[theorem]{Remark}
\newtheorem{remark-no-advance}[equation]{Remark}
\newtheorem{remarks-no-advance}[equation]{Remarks}
\newtheorem{assumption}[theorem]{Assumption}
\newtheorem{careful calculation}[theorem]{Careful Calculation}
\newtheorem{present summary}[theorem]{Present Summary}
\newtheorem{further reductions}[theorem]{Further Reductions}
\newtheorem{chunk}[theorem]{}
\newtheorem{marching orders}[theorem]{Marching Orders}
\newtheorem{circle the wagons}[theorem]{Circle the wagons}
\newtheorem{further notation and quick consequences}[theorem]{Further notation and quick consequences}
\newtheorem{further notation}[theorem]{Further notation}
\newtheorem{chunk-no-advance}[equation]{}
\newtheorem*{Remark}{Remark}
\newtheorem*{proof-of-4a}{Proof of {\rm(\ref{15.2.4a})}}
\newtheorem*{proof outline}{Proof Outline}
\numberwithin{equation}{theorem}
\begin{document}

\title[Totally Reflexive Modules Over Rings
That Are Close To Gorenstein]{Totally Reflexive Modules Over Rings
That Are Close To Gorenstein}

\date{\today}

\author[A.~R.~Kustin]{Andrew R.~Kustin}
\address{Andrew R.~Kustin\\ Department of Mathematics\\ University of South Carolina\\\newline 
Columbia\\ SC 29208\\ U.S.A.} \email{kustin@math.sc.edu}

\author[A.~Vraciu]{Adela~Vraciu}
\address{Adela~Vraciu\\ Department of Mathematics\\ University of South Carolina\\
Columbia\\ SC 29208\\ U.S.A.} \email{vraciu@math.sc.edu}

\subjclass[2010]{13A02, 13H10, 13D02, 18G20}

\keywords{Canonical module, G-regular local ring, Gorenstein colength, Higher matrix factorization, Summands of syzygies, Test modules, Teter ring,  Totally reflexive modules.}

\thanks{Research partly supported by  Simons Foundation collaboration grant 233597 (ARK) and NSF grant  DMS-1200085 (AV)}

\begin{abstract}Let $S$ be a deeply embedded, equicharacteristic,   Artinian Gorenstein local ring. We prove that if  
$R$ is a non-Gorenstein quotient  of  $S$ of  small colength, then 
every totally reflexive $R$-module is free.
Indeed, the second syzygy of the canonical module of $R$ has a direct summand $T$ which is a test module for freeness over $R$ in the sense that if $\Tor_+^R(T,N)=0$, for some finitely generated $R$-module $N$, then $N$ is free. \end{abstract}

\maketitle

{\small {\em In honor of Craig Huneke, on the occasion of his sixty-fifth birthday.}}

\section{Introduction.}\label{Intro}
Let $R$ be a commutative Noetherian ring.
A finitely generated $R$-module $M$ is called {\it totally reflexive} if there exists a doubly infinite complex of
finitely generated free $R$-modules
$$F:\quad  \cdots \to F_{i+1} \to F_i \to F_{i-1}\to \cdots ,$$
such that $M$ is isomorphic to the cokernel of $F_{i+1}\to F_{i}$, for some $i$, and such that both $F$ and  the dual complex
$\Hom_R (F , R)$ are exact. Totally reflexive modules were introduced by Auslander and Bridger \cite{AB}, who
proved that $R$ is Gorenstein if and only if every finitely generated $R$-module has a finite resolution by totally reflexive $R$-modules. Over a
Gorenstein ring, the totally reflexive modules are precisely the maximal Cohen-Macaulay modules; hence, in particular, every singular Gorenstein local ring has a non-free totally
reflexive module.

Totally reflexive modules have been the object of extensive study ever since their introduction;  yet it remains  an open problem to determine conditions on a ring $R$ that are necessary and sufficient for the existence of a non-free totally reflexive $R$-module. We  follow the lead of Takahashi \cite{T08}  and call a commutative Noetherian local ring
$G$-regular
if every totally reflexive
module over the ring is free. Regular local rings are trivial examples of $G$-regular local rings. Avramov and Martsinkovsky \cite[3.5.(2)]{AM}
 proved that every Golod local ring that is not a hypersurface
 is $G$-regular.

The main result of the paper is Theorem~\ref{Main Result} where we prove that if  
$R$ is a non-Gorenstein quotient of small colength of a deeply embedded, 
equicharacteristic Artinian Gorenstein local ring,  then $R$ is $G$-regular. 
Roughly speaking, a local ring $R$ is deeply embedded if $R=P/I$ for some regular local ring $(P,\M)$ and $I\subseteq \M^{n_0}$ for some large integer $n_0$; see \ref{local}.(\ref{v(R)}) or \cite[4.1]{RS} for a more precise formulation.

We consider local rings $(R,\m)$ of the
form $R = S/(0:_SK)$ where $(S, \n)$ is an Artinian Gorenstein local ring, and
$K$ is an ideal of $S$. Notice that $K$ is also an $R$-module; furthermore, the canonical module of $R$, which is denoted by $\omega_R$, is equal to $K$. 
Sometimes we 
 assume $S = P/\mathfrak a$, where $P$ is a
standard graded polynomial ring and $\a$ is a homogeneous ideal of $P$. 

Our favorite method of showing that a ring $R$ is $G$-regular is to  exhibit a proj-test module  which is a direct summand of a syzygy of the canonical module of $R$. The finitely generated $R$-module $T$ is a proj-test module if the only finitely generated $R$-modules $N$ with $\Tor^R_+(T,N)=0$ are the projective $R$-modules;  see \ref{TMGR}.
We begin by summarizing our
results that establish the existence of direct summands of syzygies of the
canonical module for certain classes of rings. See \ref{con1} for our use of $\Fitt^1$, \ref{beats me} for the proof of Theorem~\ref{(0.1)},
\ref{prove T1} for the proof of Theorem~\ref{0.2A},
 and \ref{proof-of-65.1} for the proof of Theorem~\ref{0.2B}.

\begin{theorem}\label{(0.1)}Let $(R,\m,\kk)$ be a local ring of  the form $R= S/(0:_SK)$ where $(S, \n)$ is an Artinian Gorenstein local ring and
$K$ is an ideal of $S$. If
\begin{align}\label{(1)}\Fitt_S^1(K) &{}= \n,\\\intertext{and}
\label{(2)}\n K :_S \n &{}\ne K :_S \n,\end{align}
then the second syzygy of the canonical module of $R$, viewed as an $R$-module, has a direct summand isomorphic
to $R/\m$.\end{theorem}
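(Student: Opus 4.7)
The plan is to apply the criterion that a finitely generated $R$-module $M$ contains $R/\m$ as a direct summand if and only if $(0 :_M \m) \not\subseteq \m M$; accordingly the goal is to construct $\bar v \in \syz_2^R(\omega_R)$ satisfying $\m \bar v = 0$ and $\bar v \notin \m \cdot \syz_2^R(\omega_R)$.

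First I would make $\syz_2^R(K)$ concrete. Fix a minimal $S$-free presentation $S^m \xrightarrow{\phi} S^n \xrightarrow{\epsilon} K \to 0$ with $\epsilon(e_i) = k_i$ the minimal generators of $K$. Because $J := \ann_S(K)$ annihilates $K$, tensoring with $R = S/J$ gives an exact $R$-presentation $R^m \xrightarrow{\phi_R} R^n \to K \to 0$ (using $K \otimes_S R = K$ together with the check that $\ker(R^n \to K) = \im\phi_R$). Hence $\syz_2^R(K) = \ker \phi_R = \{\bar v \in R^m : \phi v \in JS^n\}$. Since the entries of $\phi$ lie in $\n$, every $v \in \socle(R)^m$ satisfies $\phi_R \bar v = 0$ automatically, so $\socle(R)^m \subseteq \syz_2^R(K)$. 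The theorem thereby reduces to exhibiting a socle element outside $\m \cdot \syz_2^R(K)$.

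Next I would invoke Hypothesis (\ref{(2)}): select $\alpha \in (K :_S \n) \setminus (\n K :_S \n)$ and $y \in \n$ with $\alpha y \in K \setminus \n K$. Expanding $\alpha y = \sum_i c_i k_i$ in $S$, the condition $\alpha y \notin \n K$ forces some $c_i$ to be a unit; after reindexing assume $c_n$ is a unit, and after rescaling $k_n$ arrange $c_n = 1$. For each generator $x$ of $\n$ the calculation $x \cdot \alpha y = (\alpha x) \cdot y \in (\alpha \n) \cdot y \subseteq K \cdot \n = \n K$ shows that $x \vec c \in \im\phi + \n S^n$, where $\vec c = (c_1, \ldots, c_n) \in S^n$.

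Finally, Hypothesis (\ref{(1)}) --- $\Fitt_S^1(K) = I_{n-1}(\phi) = \n$ --- would be used to promote this elementwise relation into a single element of $\syz_2^R(K)$. Each generator of $\n$ is an $S$-linear combination of $(n-1) \times (n-1)$ minors of $\phi$, and the classical cofactor identities let one assemble, from $\vec c$ and these minor expressions, a vector $v \in S^m$ with three properties: (i) $\phi v \in JS^n$, so $\bar v \in \syz_2^R(K)$; (ii) $\m \cdot \bar v = 0$ in $R^m$, because the cofactor structure together with the inclusion $\alpha \n^2 \subseteq \n K$ forces each product of an $\m$-element with $v$ to land in $JS^m$; and (iii) a comparison map $\syz_2^R(K) \to R/\m$, built from the very same cofactor data, evaluates $\bar v$ to the class of $c_n = 1$, hence non-zero. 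These three properties combine to give the desired socle element outside $\m \cdot \syz_2^R(K)$. The main technical obstacle will be the cofactor bookkeeping in (ii)--(iii): one must arrange both the socle property and the non-vanishing of the comparison map simultaneously. Hypothesis (\ref{(1)}) supplies enough minors to treat all generators of $\n$ uniformly, and Hypothesis (\ref{(2)}) supplies the unit coefficient $c_n$ that certifies the non-redundancy of $\bar v$.
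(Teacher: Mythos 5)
Your first two paragraphs are fine and match the framework of the paper's Lemma~\ref{lemma-for-0.1}: the identification $\syz_2^R(K)=\ker(\phi\otimes_SR)=\{\bar v\in R^m\mid \phi v\in JS^n\}$ for $J=0:_SK$, the criterion ``a socle element outside $\m\cdot\syz_2^R(K)$ splits off a copy of $\kk$'', and the extraction from (\ref{(2)}) of $\alpha\in(K:_S\n)\setminus(\n K:_S\n)$ and a minimal generator $\alpha y$ of $K$ are all correct. The proof breaks down exactly where the syzygy element must be produced, for two reasons. First, you have misread Hypothesis (\ref{(1)}): by \ref{con1}, $\Fitt^1_S(K)$ is the ideal $I_1(\phi)$ generated by the \emph{entries} of a minimal presentation matrix (the classical $\Fitt_{\mu(K)-1}(K)$), not $I_{n-1}(\phi)$. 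Since a minimal presentation has all entries in $\n$, one has $I_{n-1}(\phi)\subseteq\n^{\,n-1}$, so your reading ``$I_{n-1}(\phi)=\n$'' could only hold when $\mu_S(K)\le 2$; it is a far stronger hypothesis than the one stated and would exclude most of the rings the theorem is applied to (for instance Teter rings of embedding dimension at least three, where $K=\n$). Hence the entire ``cofactor'' plan, which rests on writing generators of $\n$ as combinations of $(n-1)\times(n-1)$ minors, is aimed at a different statement.

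Second, even granting some hypothesis on minors, the final step is not an argument: no vector $v\in S^m$ is actually constructed, and properties (i)--(iii) are only asserted to follow from ``cofactor bookkeeping,'' which you yourself identify as the main obstacle. The difficulty is structural. To have $\m\bar v=0$ in $R^m$ the coordinates of $v$ must lie in $J:_S\n=0:_S\n K$, i.e.\ they must be socle elements of $R$; but all the data your sketch produces ($\alpha\in K:_S\n=0:_S\n J$, the coefficients $c_i$, entries or minors of $\phi$) lives on the ``$K$ side,'' and nothing in it yields an annihilator of $\n K$. Bridging this is precisely what the paper's Gorenstein-duality step does: hypotheses (\ref{(1)}) and (\ref{(2)}) are shown equivalent to $(J:_S\n)\cdot I_1(\phi)\not\subseteq\n J$, which hands you an element $s\in J:_S\n$ and an entry $\phi_{ij}$ with $s\phi_{ij}$ a minimal generator of $J$; then $se_j$ is the required socle element of $\ker(\phi\otimes_SR)$, and its minimality is a short computation (Lemma~\ref{lemma-for-0.1}). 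Without that duality step, or an explicit substitute for it, your outline has no mechanism to establish (ii) and (iii), so the proof as proposed does not go through.
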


 \begin{theorem}\label{0.2A}Let $P$  
be a  commutative Noetherian ring and $A$ and $B$ be ideals of $P$ with $A\subseteq B^2$, and $P/A$ and $P/B$ both Artinian Gorenstein local rings. 
Assume $B$ is generated by a regular $P$-sequence of length at least two. 

Let $R$ be the ring $R=P/(A:_PB)$. Then the second syzygy of the canonical module of $R$, viewed as an $R$-module,  has a direct summand isomorphic
to $B/B^2$. \end{theorem}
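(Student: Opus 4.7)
The plan is to set $J = A :_P B$, so that $R = P/J$ and $\omega_R = B/A$, and then to build enough of a minimal $R$-free resolution of $\omega_R$ to exhibit $B/B^2$ as a direct summand of the second syzygy $\Omega_R^2 \omega_R$.

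First I would verify that $J \subseteq B$: the chain $JB \subseteq A \subseteq B^2$ places $J$ in $B^2 :_P B$, and since $b_1,\dots,b_n$ is a regular $P$-sequence, $B/B^2$ is $(P/B)$-free of rank $n$, which forces $B^2 :_P B = B$. Hence $P/B$ is a residue ring of $R$, and $B/B^2$ is naturally an $R$-module, fitting into the $R$-linear short exact sequence
$$
0 \longrightarrow B^2/A \longrightarrow \omega_R \longrightarrow B/B^2 \longrightarrow 0.
$$
Because $A \subseteq B^2 \subseteq \m_P B$, both $\omega_R$ and $B/B^2$ have exactly $n$ minimal generators over $R$ (the images $\bar b_1,\dots,\bar b_n$), and hence common minimal projective cover $R^n$. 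Taking the induced lift $R^n\to R^n$ to be the identity produces the first-syzygy sequence
$$
0 \longrightarrow \Omega_R\omega_R \longrightarrow \Omega_R(B/B^2) \longrightarrow B^2/A \longrightarrow 0,
$$
in which $\Omega_R(B/B^2) = (B/J)^n$ by the $(P/B)$-freeness of $B/B^2$.

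Next, I would construct an explicit presentation of $\Omega_R\omega_R$. Let $a_1,\dots,a_m$ be minimal generators of $A$; using $A \subseteq B^2$, choose expressions $a_k = \sum_i c_{ik} b_i$ with $c_{ik} \in B$. Together with the Koszul relations $\zeta_{ij} = \bar b_j e_i - \bar b_i e_j$, these give an $R$-linear surjection $R^{m + \binom{n}{2}} \twoheadrightarrow \Omega_R \omega_R$: surjectivity uses that $b_1, \dots, b_n$ being regular means every $P$-syzygy of $(b_1,\dots,b_n)$ is Koszul, so any $(p_i) \in P^n$ with $\sum p_i b_i \in A$ rewrites modulo $J$ as a Koszul-plus-$A$-combination. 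Since every entry of the resulting presentation matrix lies in $B/J \subseteq \m_R$, the cover is minimal, and $\Omega_R^2 \omega_R$ is the kernel of this presentation matrix.

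To extract the $B/B^2$ summand, I would define a ``leading-coefficient'' projection $\Omega_R^2 \omega_R \twoheadrightarrow B/B^2$ that records only the Koszul part of a relation (ignoring the $A$-relation coefficients), and construct a compatible section $B/B^2 \hookrightarrow \Omega_R^2 \omega_R$ by lifting each basis element $\bar b_i$ of $B/B^2$ to an explicit second syzygy built from the Koszul multiplication table of $b_1, \dots, b_n$. The principal obstacle is verifying that this splitting is well-defined and $R$-linear; this reduces to the twin Gorenstein dualities at play: the Gorenstein duality of $P/A$ yields $A :_P J = B$, which prevents the $A$-relations from contaminating the Koszul splitting, and the Gorenstein duality of $P/B$ underlies the rank-$n$ freeness of $B/B^2$, which makes the leading-coefficient map well-defined. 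Once these compatibilities are in place, $\Omega_R^2 \omega_R$ decomposes with $B/B^2$ as a direct summand.
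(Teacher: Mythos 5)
Your setup is sound and in fact parallels the paper's: you correctly identify $\omega_R=B/A$, and your presentation of $\Omega_R\omega_R$ by the Koszul relations on $b_1,\dots,b_n$ together with the ``$A$-relations'' $(c_{ik})_i$ is exactly the mapping-cone presentation the paper uses (Koszul complex of $B$ plus a free resolution of $A$ plus a comparison map chosen to land in $BB_1$). But the decisive step --- actually producing the splitting --- is only gestured at, and the gesture, taken at face value, does not work. Your ``leading-coefficient projection'' records the Koszul coordinates of a second syzygy, which live in $R^{\binom n2}$; there is no evident $R$-linear surjection from that data onto $B/B^2\cong (R/BR)^n$, and different second syzygies representing the same phenomenon genuinely mix Koszul and $A$-coordinates. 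Worse, your proposed section cannot be built ``from the Koszul multiplication table'': for $\bar b_i\mapsto \sigma_i$ to give a well-defined $R$-linear map out of the free $P/B$-module $B/B^2$, the elements $\sigma_i$ must be killed by $B$ inside the ambient free $R$-module, i.e.\ their entries must lie in $(A:_PB):_PB=A:_PB^2$; entries of the form $\pm b_j$ coming from Koszul data would require $B^3\subseteq A$, which is false in general. So the ``principal obstacle'' you name is precisely the theorem, and nothing in the proposal overcomes it.

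What is missing is the paper's key construction. Gorenstein duality (here is where the hypothesis that $P/B$ is Gorenstein enters, since $(A:_PB)/A\cong\omega_{P/B}$ is cyclic) produces a single element $\Delta$ with $(A:_PB)=(A,\Delta)$ and $(A:_P\Delta)=B$; the summand is then realized as the image of a map $L:B_1\to B_2$ (from the degree-one to the degree-two term of the Koszul complex on $b_1,\dots,b_n$) satisfying $b_2\circ L\equiv\Delta\cdot\mathrm{id}\ (\mathrm{mod}\ AB_1)$ and $L(B_1)\subseteq (A:_PB^2)B_2$. These two conditions are exactly what make the induced map on $\Omega_R^2\omega_R$ both well defined and split: they give $\ker(L\otimes R)=BB_1/(A:_PB)B_1$, hence $\operatorname{im}(L\otimes R)\cong B/B^2$, and they prevent the $A$-relations from meeting this image. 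The existence of such an $L$ is itself a nontrivial lemma; the paper reduces it, via Gorenstein duality again, to the colon containment $(\theta_1,B^2):_P(\theta_2,\dots,\theta_n)\subseteq B$ for the regular sequence $\theta_1,\dots,\theta_n$ generating $B$ --- this is where ``complete intersection of grade at least two'' is used, not merely the freeness of $B/B^2$. As a minor point, your inference that the cover $R^{m+\binom n2}\twoheadrightarrow\Omega_R\omega_R$ is minimal because the presentation matrix has entries in $\mathfrak m_R$ is not valid (that only shows minimality one step earlier), though this is harmless since second syzygies are well defined up to free summands and Krull--Schmidt holds over the Artinian local ring $R$.
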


\begin{theorem}\label{0.2B} 
 Let $\kk$ be a field of arbitrary characteristic,   $P=\kk[X_1,X_2,X_3,Y_1,\dots,Y_s]$ be a standard-graded polynomial ring over $\kk$  in $3+s$  variables for some nonnegative integer $s$,  $B'$ be an ideal in $P$ which is generated by five linearly independent quadratic forms in the variables $X_1,X_2,X_3$, 
$B''$ be the ideal $(Y_1,\dots,Y_s)$ of $P$,  $B$ be the ideal $B'+B''$ of $P$,
and  $A\subseteq ([P]_1)^5$ be a homogeneous ideal of $P$ with $P/A$ an Artinian Gorenstein local ring.  
Assume    $[\socle(P/B)]_1=0$. 

Let $R$ be the ring $R=P/(A:_PB)$. 
Then the second syzygy of the canonical module of $R$, viewed as an $R$-module,  has a direct summand isomorphic
to $B'/BB'$.
 \end{theorem}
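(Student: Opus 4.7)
The plan is to adapt the proof of Theorem~\ref{0.2A} to the present setting, where $B$ is no longer generated by a regular $P$-sequence: the five linearly independent quadratic forms in only three variables $X_1,X_2,X_3$ cannot lie in a regular sequence, so the Koszul complex on the generators of $B$ no longer resolves $P/B$. The role of the hypothesis $A\subseteq ([P]_1)^5$ is to guarantee that the image $K$ of $B$ in $S=P/A$ is the canonical module of $R$, and moreover that the $P$-module $B'/BB'$ acquires a natural $R$-module structure via $(A:_PB)\cdot B'\subseteq BB'$, so that it is legitimate to speak of it as a summand of a syzygy of $\omega_R$.

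First I would write down the presentation $R^{5+s}\twoheadrightarrow K=\omega_R$ sending the standard basis vectors to $f_1,\dots,f_5,Y_1,\dots,Y_s$, and then begin computing the first and second syzygies. The first syzygies split into three pieces: the Koszul relations among the $Y_i$'s, the mixed relations $Y_if_j-f_jY_i=0$, and the linear syzygies of the five quadrics $f_1,\dots,f_5$ in $\kk[X_1,X_2,X_3]$ (the multiplication map $\bigoplus_{i=1}^{5}[P]_1\to [P]_3$ has $15$-dimensional source and $10$-dimensional target, so its kernel has dimension at least five). The $Y$-part contributes a controlled Koszul summand, allowing me to reduce the essential bookkeeping to the three-variable quadratic part.

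The key step is to identify $B'/BB'$ as a direct summand of $\syz_2^R(\omega_R)$. The natural embedding sends the class of $f_i$ to the second syzygy element obtained by lifting the linear syzygies of $f_1,\dots,f_5$ one step further through the resolution. The hypothesis $[\socle(P/B)]_1=0$ prevents the candidate summand from collapsing into $\m\cdot\syz_2^R(\omega_R)$: it guarantees that no linear form of $P/B$ annihilates all of $B'$, which in turn preserves the minimality of the candidate generators. The main obstacle is constructing an $R$-linear retraction $\syz_2^R(\omega_R)\to B'/BB'$; I expect this to require a careful use of Gorenstein duality on $S$, together with the socle hypothesis, to rule out accidental relations coming from $A:_PB$ that would otherwise obstruct the splitting.
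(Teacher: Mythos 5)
You correctly identify the shape of the problem, but the proposal stops exactly where the real work begins, so there is a genuine gap: no splitting is actually produced. The paper's proof (see \ref{proof-of-65.1}) runs through Lemma~\ref{75.3} and Lemma~\ref{2.3}. It does not manipulate the minimal $R$-free resolution of $\omega_R$ directly; instead it identifies $\syz_2^R(BS)$ from a mapping cone built out of a comparison map between $P$-free resolutions of $A$ and $B$ (Notation~\ref{75.4}, Proposition~\ref{75.5}), and the summand $B'/BB'$ is realized as the image of $\dbar{L'}$, where $L\colon B_1\to B_2$ is a homotopy-like map satisfying $b_2\circ L\equiv \Delta\cdot\id_{B_1}$ mod $AB_1$, with $\Delta$ a representative of the socle dual to $B$, i.e.\ $(A:_PB)=(A,\Delta)$ and $(A:_P\Delta)=B$. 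The entire content of the proof is the construction and verification of this $L$: the explicit $5\times 5$ block $L'$ of Table~\ref{table}, with entries written in the dual basis $\{\alpha_m\}$ coming from the perfect pairing on $S=P/A$, satisfying $b_2'L'\equiv L'b_2'\equiv \Delta I_5$ mod $A$ together with $B\cdot I_1(L')\subseteq (A,\Delta)$; and Claim~\ref{Apr-10} ($\Delta\in B'\cdot(A:_PB'')+A$), which supplies the block $L''$ handling $Y_1,\dots,Y_s$. Your closing sentence, that the retraction ``should require a careful use of Gorenstein duality together with the socle hypothesis,'' names this obstacle without overcoming it; nothing in the outline produces the retraction, a candidate for it, or a reason it exists. (The authors in fact had to solve a large linear system to find $L'$, so its existence is not something that falls out of the general setup.)

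Two further points of disagreement with the paper. The hypothesis $[\socle(P/B)]_1=0$ is not used to keep the candidate generators out of $\m\cdot\syz_2^R(\omega_R)$; its role (Observation~\ref{2.2}) is to force $P'/B'$, with $P'=\kk[X_1,X_2,X_3]$, to be Artinian Gorenstein with Hilbert series $1+5t+t^2$, so that after a change of variables $B'$ is resolved by a $5\times5$ alternating matrix $b_2'$ and ${B'}^2=([P']_1)^4$; this normalization is what makes the choice of $\Delta$ and the construction of $L'$ possible at all. Likewise $A\subseteq([P]_1)^5$ is not what makes $BS$ the canonical module of $R$ (that is Gorenstein duality alone, Remark~\ref{2.4.1}); it is what guarantees $A\subseteq B^2$ (via ${B'}^2=([P']_1)^4$) and the linear independence in $S$ of all monomials of degree at most four, which underlies the dual-basis construction of $\Delta$ and $L'$ and the degree count in Claim~\ref{Apr-10}. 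Finally, your description of the first syzygies of $\omega_R$ over $R$ omits the relations coming from the generators of $A$ and from $\Delta$ itself; in the paper these appear as the column $c_1$ inside $\delta_1$ and the extra column $\delta_{2,r}$, and they are precisely the terms one must separate from the candidate summand, so a correct argument cannot ignore them.
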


In Section~\ref{small colength} we prove that for each ring under consideration in the main result, Theorem \ref{Main Result}, one of the above Splitting Theorems applies. We prove the Splitting Theorems in Sections~\ref{FST}, \ref{68}, and \ref{65}. In Section~\ref{TM} we prove that the summands produced by the Splitting Theorems are indeed proj-test $R$-modules. We complete the proof by appealing to Observation~\ref{2.13}.

The Splitting Theorems of Sections~\ref{FST}, \ref{68}, and \ref{65} and the Test Module Theorems of Section~\ref{TM} are of independent interest and are stated with more general hypotheses than those of the main result, Theorem~\ref{Main Result}.

\section{Notation, conventions, and preliminary results.}\label{Prelims}

In this paper ${\kk} $ is always a field. If $V$ is vector space over $\kk$, then $\dim_{\kk} V$ is the  dimension of $V$ as a  vector space over $\kk$. 
\begin{chunk}Let $I$ be an ideal in a ring $S$, $N$ be an $S$-module, and  $L$ and $M$ be submodules of $N$. Then 
$$L:_IM=\{x\in I\mid xM\subseteq L\}\quad\text{and}\quad L:_MI=\{m\in M\mid Im
\subseteq L\}
.$$
If $L$ is the zero module, then we also use ``annihilator notation'' to describe these ``colon modules''; that is,
$$\ann_SM=0:_SM
.$$ \end{chunk}

\begin{chunk}\label{con1}If $M$ is a finitely generated module over a local ring $S$, and
$$S^b\xrightarrow{\phi}S^{\mu(M)}\xrightarrow{}M\to 0$$ is a presentation of $M$ with $\mu(M)$ equal to the minimal number of generators of $M$, then we write 
$\Fitt_S^1(M)$ for the ideal of $S$ generated by the entries of $\phi$. Notice that $\Fitt_S^1(M)$ is equal to the usual Fitting ideal $\Fitt_{\mu(M)-1}(M)$ of $M$ and does not depend on the choice of $\phi$. 
\end{chunk}

\begin{chunk} If $M$ is a matrix (or a homomorphism of free $R$-modules), then $I_r(M)$ is the ideal generated by the
$r\times r$ minors of $M$ (or any matrix representation of $M$). We denote the transpose of a matrix $M$ by $M^{\rm T}$. \end{chunk}

\begin{chunk}If $S$ is a ring and $M$ is an $S$-module, then let $\lambda_S(M)$ denote the length of $M$ as an $S$-module. If $R$ is a quotient of $S$, then the colength of $R$ as an $S$-module is $$c_S(R)=\lambda_S(S)-\lambda_S(R).$$\end{chunk}

\begin{chunk}\label{local}``Let $(S,\mathfrak n,\kk)$ be a local ring'' identifies $\mathfrak n$ as the unique maximal ideal of the commutative Noetherian local ring $S$ and $\kk$ as the residue class field $\kk=S/\mathfrak n$. \begin{enumerate}[\rm(a)]
\item The {\it embedding dimension} of $S$ is $\dim_{\kk} (\n/\n^{2})$. 
\item\label{v(R)} The parameter $v(S)$ is defined by $$\textstyle v(S)=\inf\left\{i\left| \dim_{\kk} (\n^i/\n^{i+1}) < \binom{(n-1)+i}{i}\right.\right\},$$where $n$ is the embedding dimension of $S$. (This notation  is introduced in \cite[(4.1.1)]{RS}.) Notice that if $\n$ is minimally generated by $x_1,\dots,x_n$ and $i$ is an integer with $0\le i\le v(S)-1$, then the $\binom{(n-1)+i}i$ monomials of degree $i$ in the symbols $x_1,\dots,x_n$ represent a basis for the $\kk$-vector space $\n^i/\n^{i+1}$.
\item We say that a local ring $S$ is ``deeply embedded'' if $v(S)$ is large.
\end{enumerate}
\end{chunk}
\begin{chunk} \label{2C}
{\bf Gorenstein duality.} If $(S,\n,\kk)$ is an Artinian Gorenstein local ring, then $S$ is an injective $S$-module, $\Hom_S(-,S)$ is an exact functor, and $\lambda_S(M)=\lambda_S(\Hom_S(M,S))$ for all $S$-modules $M$ of finite length. Furthermore, if $A$ is an ideal of $S$, then $(0:_SA)$ is the canonical module of $S/A$. (See, for example, \cite{Ba63,BH}.) The following statements are immediate consequences of the above facts. We refer to them as Gorenstein duality.

\begin{proposition-no-advance} 
If $A$ and $B$ are ideals of  an Artinian Gorenstein local ring $S$, 
then 
\begin{enumerate}[\rm(a)]
\item $\lambda_S(S/A)=\lambda_S (\ann_SA)$,
\item $\ann_S(\ann_S A)=A$,
\item $\ann_S(A+B)=(\ann_SA)\cap (\ann_SB)$,
\item $\ann_S(A\cap B)=(\ann_SA)+ (\ann_SB)$, and
\item if $A\subseteq B$, then $\lambda_S (\frac BA)=\lambda_S(\frac{\ann_SA}{\ann_SB} 
)$.
\end{enumerate}\end{proposition-no-advance}
 \end{chunk}

\begin{chunk}\label{68.1} If $R$ is a commutative ring and $M$ is an $R$-module, then an $R$-module $M'$ is a second syzygy module of $M$ if there 
are projective $R$-modules $P_1$ and $P_2$ and 
 an exact sequence of $R$-modules of the form
$$0\to M'\to P_2 \to P_1 \to M\to 0.$$We denote the second syzygy module of the $R$-module $M$ by $\syz_2^R(M)$. \end{chunk}
\begin{remark-no-advance}Recall from Schanuel's Lemma that the notion of second syzygy module is well-defined up to formation of direct sum with a projective module. In other words, if $M'$ and $M''$ are both second syzygy modules of $M$, then there are projective $R$-modules $P'$ and $P''$ with $M'\p P'$ isomorphic to $M''\p P''$.
\end{remark-no-advance}

\begin{chunk}\label{2.8}As always, let $\kk$ be a field.
\begin{enumerate}[\rm(a)]\item If $M$ is a graded module, then we write $[M]_i$ to represent the component of $M$ which consists of all homogeneous elements of degree $i$. 
\item If $Q$ is a non-negatively graded ring over a local ring $([Q]_0,\m,\kk)$, then $Q$ has a unique maximal homogeneous ideal $\mathfrak M=\m Q+\bigoplus_{1\le i}[Q]_i$. If $M$ is a $Q$-module, then the {\it socle} of $M$ is the vector space $\socle(M)=0:_M\mathfrak M$.

\item\label{2.8.c} Recall that if $M$ is a finitely generated graded module over a graded polynomial ring $P=\kk[x_1,\dots,x_n]$, then the socle degrees of $M$ may be read from the back twists in a minimal homogeneous resolution
$$\textstyle 0\to   C_n=\bigoplus_{i=1}^sP(-\beta_i)\to ...\to C_0$$ 
of $M$ by free $P$-modules. Indeed, the
computation of $\Tor_n^P(M,\kk)$ in each coordinate yields 
 a graded isomorphism $$\socle M\cong \bigoplus_{i=1}^s\kk\big(\big(\sum_{j=1}^n \deg x_j\big )-\beta_i\big).$$
\item  A graded ring $S=\bigoplus_{0\le i}[S]_i$ is called a {\it standard-graded  ${\kk} $-algebra}, if $S$ is a commutative ring with $[S]_0={\kk} $, $S$ is generated as an $[S]_0$-algebra by $[S]_1$, and $[S]_1$ is finitely generated as an $[S]_0$-module.  
\item If $(S,\n,\kk)$ is a local ring, then the associated graded ring of $S$ is the standard-graded $\kk$-algebra
$$\agr{S}=\bigoplus_{i=0}^\infty \frac{\n^i}{\n^{i+1}}.$$

\end{enumerate}
\end{chunk}

\begin{chunk} Let $S$ be a commutative Noetherian  graded ring and $M$ be a  graded $S$-module.
\begin{enumerate}[\rm(a)]\item Define
\begin{align*}
\topdeg (M)&{}=\inf\{\tau\mid [M]_j=0\text{  for all $j$ with $\tau<j$}\},
\notag\\
\mgd(M)&\textstyle{}=\inf \left\{\tau\left| S\left(\bigoplus_{j\le \tau}[M]_j\right)=M\right.\right\}.\notag\end{align*}
The expressions ``$\topdeg$'' and  ``$\mgd$'' are read ``top degree'' and ``maximal generator degree'', respectively.
If $M$ is the zero module, then $\topdeg(M)$ and $\mgd(M)$ are both equal to $-\infty$.
\item If $S$ is non-negatively graded,  $[S]_0$ is Artinian, and $M$ is finitely generated, then 
\begin{enumerate}[\rm(i)]
\item the  {\it Hilbert function} of $M$ is the function $\operatorname{HF}_S(M,\underline{\phantom{x}})$, from the set of integers to the set of non-negative integers, with
$\operatorname{HF}_S(M,i)=\lambda_{[S]_0}([M]_i)$, and  

\item the {\it Hilbert series} of $M$
 is the formal generating function $$\operatorname{HS}_S(M,t)=\sum_{i\in \Bbb Z}\operatorname{HF}_S(M,i)t^i.$$ If $M=S$, we often write $\HS(S,t)$ (or simply $\HS(S)$) in place of $\HS_S(S,t)$.
\end{enumerate}\end{enumerate}
\end{chunk}

\begin{chunk}\label{TMGR}{\bf Test modules and $G$-regularity.} Observation~\ref{2.13} is the starting point for the present paper. The authors of \cite{SV} refer to  \cite[Lemma 3.2]{SV}, which is a similar result,  as ``well-known by the experts''. The present version uses the notion of proj-test module. Related test modules have been used in \cite{M14,CDT,C-SW}.
\begin{definition-no-advance}\label{proj-test}Let $R$ be a commutative Noetherian ring and $T$ be a finitely generated $R$-module. Then   
$T$ is called  a {\it proj-test module}
 for $R$, if the only  finitely generated $R$-modules $N$ with 
$\Tor_{i}^R(T,N)=0$ for all positive $i$ are the  projective $R$-modules. 
\end{definition-no-advance}
\begin{observation-no-advance}\label{Matlis} If $(R,\m,\kk)$ is an Artinian local ring with canonical module $\omega_R$ and $M$ is a finitely generated $R$-module, then $\Ext^i_R(M,R)$ and $\Tor_i^R(M,\omega_R)$ are Matlis duals of one another.\end{observation-no-advance}

\begin{proof}
Recall that   Matlis dual is the functor $(-)^\vee=\Hom_R(-,E)$, where $E$ is the injective envelope of the $R$-module $\kk$; furthermore, in the present situation, $E$ and $\omega_R$ are isomorphic $R$-modules.  
The ring $R$ is complete; so, $N^{\vee\vee}\cong N$ for every finitely generated $R$-module $N$.

Let $F$ be a resolution of $M$ by finitely generated free $R$-modules. Observe that
\begin{align*}
\Ext^i_R(M,R)&{}=\HH^i(\Hom_R(F,R))\\
&{}=\HH^i(\Hom_R(F,\Hom_R(\omega_R,\omega_R)))&&\text{because $R^{\vee\vee}\cong R$}  \\
&{}\cong \HH^i(\Hom_R(F\otimes_R\omega_R,\omega_R))&&\text{by the adjoint isomorphism theorem}\\
&{}=\HH^i((F\otimes_R\omega_R)^\vee)\\
&{}\cong (\HH_i(F\otimes_R\omega_R))^\vee&&\text{because $(-)^\vee$ is exact} 
\\
&{}=(\Tor_i^R(M,\omega_R))^\vee.\end{align*}
\end{proof}

\begin{observation-no-advance}\label{2.13}Let $(R,\m,\kk)$ be an Artinian local ring with canonical module $\omega_R$. If $T$ is a proj-test module for $R$ and $T$ is a summand of a syzygy of $\omega_R$, then $R$ is $G$-regular.  \end{observation-no-advance}

\begin{proof}If  $X$ is  a totally reflexive $R$-module, then, by definition, $\Ext^i_R(X,R)=0$ for all positive $i$ and therefore, from Observation~\ref{Matlis}, $\Tor_i^R(X,\omega_R)=0$ for all positive $i$. It follows that $\Tor_i^R(X,T)=0$ for all positive $i$; and therefore $X$ is a free $R$-module by the definition of proj-test module.\end{proof}
\end{chunk}

\section{Rings of small Gorenstein colength.}\label{small colength}
Let $S$ be an equicharacteristic Artinian Gorenstein local ring. 
Assume that the invariant $v(S)$, of \ref{local}.(\ref{v(R)}), is large.
In this section we prove that if $R$ is a non-Gorenstein quotient of $S$ of small colength, then one of the Splitting Theorems of Section~\ref{Intro} applies to $R$.

\begin{theorem}\label{mar-16-17} Let $(S,\n,\kk)$ be an equicharacteristic  Artinian Gorenstein local ring with embedding dimension at least two, 
 and  $R$ be the ring $R=S/J$ for some proper non-zero 
 ideal   $J$ of $S$. Assume that either
\begin{itemize}
\item $1\le c_{S}(R)\le 4$, or
\item $c_S(R)=5$ and $S$ is a standard-graded algebra over a field $[S]_0$.
\end{itemize}
If the parameter  $v(S)$ 
 is sufficiently large, then one of the Theorems {\rm\ref{(0.1)}, \ref{0.2A}, or \ref{0.2B}} applies to $R$. In particular, the following statements hold, where $k$ denotes the minimal number of generators of $\n/(0:_SJ)$.
\begin{enumerate}[\rm(a)]
\item\label{mar-16-17.a}If $k=0$, then Theorem~{\rm\ref{(0.1)}} applies to $R$.
\item\label{mar-16-17.c}If $k=1$, then   Theorem~{\rm\ref{0.2A}} applies to $R$,  provided  $2c_S(R)\le v(S)$.
\item\label{mar-16-17.b}If $2\le k=c_S(R)-1$, then Theorem~{\rm\ref{(0.1)}} applies to $R$, provided $3\le v(S)$.
\item\label{mar-16-17.d}If $k=2$,  $c_S(R)=4$, and 
$\n^3\subseteq \n(0:_SJ)$, 
then 
 Theorem~{\rm\ref{0.2A}} applies to $R$, provided $5\le v(S)$.
\item\label{mar-16-17.e}If $k=2$,  $c_S(R)=4$, and 
$\n^3\not\subseteq \n(0:_SJ)$,
 then 
 Theorem~{\rm\ref{(0.1)}} applies to $R$, provided $4\le v(S)$.
\item\label{mar-16-17.f}If $k=2$,  $c_S(R)=5$, and $S$ is a standard-graded algebra over a field $[S]_0$, then 
 Theorem~{\rm\ref{(0.1)}} applies to $R$, provided $4\le v(S)$. 
\item\label{mar-16-17.h}If $k=3$,  $c_S(R)=5$,  $3\le \maxgendeg(0:_SJ)$, 
and $S$ is a standard-graded algebra over a field $[S]_0$,
 then 
 Theorem~{\rm\ref{(0.1)}} applies to $R$, provided $3\le v(S)$.
\item\label{mar-16-17.i}If $k=3$,  $c_S(R)=5$, 
$2\le \dim_{\kk}\socle(S/0:_SJ)$, and $S$ is a standard-graded algebra over a field $[S]_0$,
then 
 Theorem~{\rm\ref{(0.1)}} applies to $R$, provided $3\le v(S)$.
\item\label{mar-16-17.g}If $k=3$,  $c_S(R)=5$, 
$\maxgendeg(0:_SJ)\le 2$, 
 $\dim_{\kk}\socle(S/(0:_SJ))=1$,  and $S$ is a standard-graded algebra over a field $[S]_0$, then 
 Theorem~{\rm\ref{0.2B}}  applies  to $R$, provided $5\le v(S)$.
 \end{enumerate}
\end{theorem}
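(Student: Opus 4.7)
\emph{The plan.} Put $K := 0:_S J$. By Gorenstein duality \ref{2C}, one has $J = \ann_S K$ and $\omega_R = K$, so that
\[
c_S(R) \;=\; \lambda_S(J) \;=\; \lambda_S(S/K) \;=\; 1 + \lambda_S(\n/K).
\]
Hence $k = \mu(\n/K)$ satisfies $0 \le k \le c_S(R)-1$, with equality iff $\n^2 \subseteq K$. Combining this with the given bound on $c_S(R)$ and the bookkeeping parameters ($\mgd(K)$, $\dim_\kk\socle(S/K)$, and whether $\n^3\subseteq \n K$), a short enumeration shows that every $R$ satisfying the hypotheses of Theorem~\ref{mar-16-17} falls under exactly one of the items (a)--(i): for each value of $c_S(R)\in\{1,2,3,4\}$ (and $c_S(R)=5$ in the standard-graded case), the range $0\le k\le c_S(R)-1$ together with the auxiliary invariants above partitions into precisely the listed sub-cases. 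Thus it suffices to treat each item individually, producing for each one the hypotheses of Theorem~\ref{(0.1)}, Theorem~\ref{0.2A}, or Theorem~\ref{0.2B}.

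For items (a), (b), (e), (f), (h), (i) we verify the two hypotheses of Theorem~\ref{(0.1)}: (1) $\Fitt_S^1(K) = \n$ and (2) $\n K :_S \n \ne K :_S \n$. Condition (2) is a socle/length statement: the inclusion $\n K :_S \n \subseteq K :_S \n$ is strict iff the natural map $\socle(S/\n K)\to\socle(S/K)$ fails to be surjective, and the latter is detected via Gorenstein duality using the bound on $c_S(R)$ together with the case-specific structure of $K$. Condition (1) is where large $v(S)$ enters: by \ref{local}.(\ref{v(R)}), multiplication in $S$ through degree $v(S)-1$ is that of a polynomial ring, so for each minimal generator $x$ of $\n$ and each generator $w$ of $K$ of sufficiently small degree one gets a Koszul-type relation $x\cdot w - w\cdot x = 0$ forcing $x \in \Fitt_S^1(K)$; in each of items (a), (b), (e), (f), (h), (i), the case-specific hypotheses guarantee enough low-degree generators of $K$ to catch every minimal generator of $\n$.

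For items (c), (d), (g) we realize $R$ in the form $R = P/(A:_P B)$, so as to invoke Theorem~\ref{0.2A} (items (c), (d)) or Theorem~\ref{0.2B} (item (g)). Using Cohen's structure theorem (or, in the graded case, a standard polynomial cover), take a regular local ring $P$ and an ideal $A\subseteq P$ with $S = P/A$. Since only $k\in\{1,2,3\}$ of the minimal generators of $\n$ lie outside $K$, a lift of $K$ to $P$ can be augmented by lifts of those extra generators to produce an ideal $B\subseteq P$ that is generated by a regular $P$-sequence of length $\ge 2$ with $P/B$ Gorenstein. The required containment $A\subseteq B^2$ (or $A\subseteq [P]_1^5$ in item (g)) is a statement about the degree of the defining relations of $S$ relative to $B$, and is exactly what the explicit lower bound on $v(S)$ in each item secures. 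In item (g) the specific shape ``five linearly independent quadratic forms in $X_1, X_2, X_3$'' emerges from Macaulay's inverse systems: the numerical data $k = 3$, $c_S(R) = 5$, $\mgd(K)\le 2$, and $\dim_\kk\socle(S/K) = 1$ force the inverse system of $P/A$ to have precisely this shape, while the variables $Y_1,\dots,Y_s$ absorb the remaining minimal generators of $\n$.

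\emph{The main obstacle.} The most intricate stratum is $k=3$, $c_S(R)=5$. First, one must use the standard-graded hypothesis together with a Hilbert-function enumeration to show that the sub-cases (g), (h), (i) together cover this stratum. Second, verifying $\Fitt_S^1(K) = \n$ when $K$ has few low-degree generators (items (h), (i)) and exhibiting the three-variable / five-quadric polynomial cover required in item (g) are each technically demanding, and are the reason for the explicit lower bound $v(S)\ge 5$ in item (g) and for the standard-graded hypothesis in items (f)--(i).
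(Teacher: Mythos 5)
Your high-level plan (divide into cases by $k$ and the auxiliary invariants, then check the hypotheses of Theorem~\ref{(0.1)}, \ref{0.2A}, or \ref{0.2B} case by case) is the paper's strategy, but the mechanism you propose for the key step is wrong. To verify Hypothesis~(\ref{(1)}) of Theorem~\ref{(0.1)}, i.e.\ $\Fitt_S^1(K)=\n$, you invoke ``Koszul-type relations $x\cdot w-w\cdot x=0$'' with $x$ a minimal generator of $\n$ and $w$ a generator of $K$. But entries of the minimal presentation matrix of $K$ are coefficients of syzygies on a minimal generating set of $K$; the Koszul relation between two minimal generators $w,w'$ of $K$ has coefficients $w',w\in K$, so Koszul relations only ever show $K\subseteq\Fitt_S^1(K)$ (the $y_i$'s), never a generator $x_i$ of $\n$ lying outside $K$ --- the expression $x\cdot w-w\cdot x$ is not a syzygy on generators of $K$ unless $x\in K$. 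In every case with $2\le k$ (items (b), (e), (f), (h), (i)) the entire difficulty is to manufacture low-degree syzygies on the generators of $K$ whose \emph{coefficients} are the $x_i$'s, and this needs case-specific structure: in (e) the two quadric generators share a linear factor (forced by $\n^3\not\subseteq\n K$), giving the linear syzygy $\ell_1q_2-\ell_2q_1=0$ with $\ell_1,\ell_2$ independent; in (f) the paper proves a Hilbert--Burch statement (Lemma~\ref{71.5}) that the linear entries of the presentation matrix of $B'$ generate $(X_1,X_2)$ and transfers it to $S$ via Lemma~\ref{71.9}; in (h)/(i) one shows the ten products $x_iq_j$, $i\in\{1,2\}$, are linearly dependent because $x_3^3$ lies outside their span, producing $\sum_i\ell_iq_i=0$ with the $\ell_i$ spanning a space of dimension at least two. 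None of this follows from your mechanism, so those items are unproved as written. Your treatment of Hypothesis~(\ref{(2)}) is likewise only a reformulation (strictness of $\n K:_S\n\subseteq K:_S\n$ as non-surjectivity of a socle map) with no verification; the paper supplies explicit witnesses ($1$, $x_1$, $q_0$, $(x_1,x_2,x_3)^2$, a linear socle element), and case (f) additionally requires Lemma~\ref{71.8} comparing $\mgd(B')$ with $\topdeg(P'/B')$.

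For items (c), (d), (g) the assertion that a lift of $K$ ``can be augmented \dots to produce an ideal $B$ generated by a regular $P$-sequence'' is exactly what must be proved, and it is false in general: in case (e) ($k=2$, $c_S(R)=4$, $\n^3\not\subseteq\n K$) the preimage $B$ is \emph{not} a complete intersection, which is precisely why that case is routed through Theorem~\ref{(0.1)}. In case (d) the regularity of $Q_1,Q_2$ is deduced from $\n^3\subseteq\n K$ by a dimension count (a common factor would make the span of the $X_iQ_j$ at most three-dimensional, while its image in $S$ contains the four-dimensional space of cubics in $x_1,x_2$); in case (c) one must first establish the equality $K=(y_1,\dots,y_s,x_1^{d+1})$ by comparing lengths; and in case (g) one must show $\mgd(K)\le 2$ rules out further generators, that $5\le v(S)$ forces $B=(Y_1,\dots,Y_s,Q_1,\dots,Q_5)$, and that $\dim_\kk\socle(S/K)=1$ yields $[\socle(P/B)]_1=0$ --- the relevant shape is that of $P/B$, not of the inverse system of $P/A$. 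So while your case skeleton coincides with the paper's, the load-bearing verifications are either missing or rest on an incorrect Fitting-ideal argument, and the proposal does not constitute a proof.
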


Assertions (\ref{mar-16-17.a}) -- (\ref{mar-16-17.e}) of Theorem~\ref{mar-16-17} are proven in \ref{Proof of Theorem}; assertions (\ref{mar-16-17.f}) -- (\ref{mar-16-17.g}) are proven in \ref{Proof of Theorem -- part 2}. 
First we make some remarks about the statement and introduce some notation.

\begin{remark-no-advance}\label{83.1.1}The ring $S$ of Theorem~\ref{mar-16-17}. is equicharacteristic and complete; so, the Cohen structure theorem guarantees that $S$ contains a copy of the residue field $\kk$; that is, there is a commutative diagram $$\xymatrix{\kk\ar@{^(->}[rr]\ar[dr]_(.45){\cong}&&S\ar[dl]^(.45){\text{natural map}}\\&S/\n.}$$ Let $n$ be the embedding dimension of $S$,  
 $U$ be an $n$-dimensional vector space over $\kk$, $P$ be the polynomial ring $\Sym_{\bullet}^{\kk}(U)$, and   $\M$ be the maximal  ideal of $P$ generated by $\Sym_1 U$. 
Fix a vector space isomorphism $U\xrightarrow{\cong}\n/\n^2$.
The ring $S$ is Artinian; so, the 
inclusion $\xymatrix{\kk\ar@{^(->}[r]&S}$
and the isomorphism $U\cong \n/\n^2$ combine to induce a surjection $\xymatrix{\pi:\ P\ar@{->>}[r]&S}$. Let $A$ be the kernel of $\pi$. No harm is done if we write $$P/A=S.$$ 
Recall from \ref{local}.(\ref{v(R)}) that $A\subseteq \M^{v(S)}$; but $A\not\subseteq \M^{v(S)+1}$. \end{remark-no-advance}

\begin{remark-no-advance}\label{apr14} In the situation of of Theorem~\ref{mar-16-17}, let $K$ be the ideal $(0:_SJ)$ of $S$. Gorenstein duality (see \ref{2C}) guarantees that $$(0:_SK)=J.$$ Observe that
\begin{enumerate}[\rm(a)]
\item\label{83.1.a} $K$ is an $R$-module,
\item\label{83.1.b} $K$ is the canonical module of $R$, and
\item\label{83.1.c} $\lambda_S(K)=c_S(R)$.
\end{enumerate}
Assertion (\ref{83.1.a}) is obvious. For (\ref{83.1.b}), observe that  
$S$ and $R$ have the same Krull dimension 
 and $S$ is a Gorenstein local ring which maps onto $R$. It follows that the canonical module of $R$ is
$$
\textstyle\omega_R=\Hom_S(R,S)=\Hom_S(S/J,S)=(0:_SJ)=K.$$Assertion (\ref{83.1.c}) holds because
$\lambda_S(R)=\lambda_S(\omega_R)=\lambda_S(K)=\lambda_S(S)-\lambda_S(S/K)$;
hence, $$\lambda_S(S/K)=\lambda_S(S)-\lambda_S(R)=c_S(R).$$ 
\end{remark-no-advance}

\begin{remark-no-advance}\label{B1} In the situation of of Theorem~\ref{mar-16-17}, let $B=\pi^{-1}(K)$ be the preimage of $K$ in $P$. It follows that $$R=\frac SJ=\frac{S}{(0:_SK)}= \frac{P/A}{(A:_PB)/A}=\frac P{(A:_PB)}.$$
\end{remark-no-advance}

\begin{remark-no-advance} 
\label{R3.2.a} In the situation of of Theorem~\ref{mar-16-17}, the 
 inequality $
k+1\le c_S(R)$ always holds because 
$$\textstyle k=\dim_{\kk}\left(
\frac{\n}{K+\n^2}\right)
\le \lambda_S(\n/K)=\lambda_S(S/K)-1=c_S(R)-1.$$The final equality is established in  Remark~\ref{apr14}.(\ref{83.1.c}).
\end{remark-no-advance}

\begin{remark-no-advance}\label{R3.2.b} The assertion of Theorem~\ref{mar-16-17} does not hold if the ring $R$ is Gorenstein; and for this reason the hypotheses of Theorem~\ref{mar-16-17} exclude $J=0$, exclude $c_S(R)=0$, and exclude the case where $\n$ is a principal ideal. Indeed, if $J$ were equal to $0$ or if $c_S(R)$ were equal to $0$, then $R$ would be equal the Gorenstein ring $S$. Similarly, if $\n$ were a principal ideal, then $S$ and $R$ would both be Gorenstein rings of  the form $\kk[X_1]/(X_1^{N_i})$ for  integers $N_1$ and $N_2$.\end{remark-no-advance}

\begin{remark-no-advance}\label{R3.2.c}The rings of Theorem~\ref{mar-16-17}.(\ref{mar-16-17.a}) are called Teter rings. It was already shown in \cite{SV} that Teter rings are $G$-regular. In fact, the present paper is inspired by \cite{SV}. 
\end{remark-no-advance}

\begin{remark-no-advance}\label{R3.2.d}All cases with  $1\le c_S(R)\le 5$ are covered in (\ref{mar-16-17.a})--(\ref{mar-16-17.h}) of Theorem~\ref{mar-16-17} because once (\ref{mar-16-17.a}), (\ref{mar-16-17.c}), and (\ref{mar-16-17.b}) are established, then, it follows from Remark~\ref{R3.2.a}, that it is  only necessary to consider $2\le k\le c_S(R)-2$ for $c_S(R)$ equal to $4$ or $5$. 
\end{remark-no-advance}

\begin{chunk}\label{Proof of Theorem}
{\bf Proof of assertions (\ref{mar-16-17.a}) -- (\ref{mar-16-17.e}) of Theorem~\ref{mar-16-17}.} Retain the notation $P$, $U$, $\M$, $A$, $K$, and $B$ which is introduced in Remarks~\ref{83.1.1}, \ref{apr14}, and \ref{B1}. 
Choose a minimal generating set $x_1,\dots,x_k,y_1,\dots,y_s$ for $\mathfrak n$ such that $x_1,\dots,x_k$ represents a minimal generating set for $\mathfrak n/K$ and $y_1,\dots,y_s$ are in $K$. Select $Y_1,\allowbreak \dots,\allowbreak Y_s,\allowbreak X_1,\allowbreak \dots,\allowbreak X_k\in U$ to be preimages of $y_1,\dots,y_s,x_1,\dots,x_k$, respectively. Observe that $Y_1,\dots,Y_s,X_1,\dots,X_k$ is a basis for the $\kk$ vector space $U$; indeed,
$P$ is the 
polynomial ring $$P=\kk[Y_1,\dots,Y_s,X_1,\dots,X_k].$$

  Let $(x)$ and $(y)$  denote the ideals  $(x_1,\dots, x_k)$ and $(y_1,\dots, y_s)$ of $S$, respectively, and let  $d$ denote the top degree of the associated graded ring $$\agr{(S/K)}=\bigoplus_{i=0}^\infty \frac{\n^i+K}{\n^{i+1}+K}$$ of $S/K$.  
In other words, $d$ is the smallest positive
integer with $(x)^{d+1} \subseteq K$. 
There are strict inclusions
\begin{equation}\label{(4)} K \subsetneq K + (x)^{d} \subsetneq \dots \subsetneq K + (x)^2 \subsetneq K + (x) = \n\subseteq S.
\end{equation}
 Notice that each quotient of consecutive terms
in (\ref{(4)}) is annihilated  by $\n$ and 
the Hilbert series of $\agr{(S/K)}$ is
$${\textstyle \HS\left(\agr{(S/K)},t\right)}=\sum_{i=0}^d\dim_{\kk}\frac{K + (x)^i}{K + (x)^{i+1}} t^i;$$
furthermore, 
\begin{align*}k&{}=\dim_{\kk}\frac{K+(x)}{K+(x)^2}= \text{the coefficient of $t$ in
$\HS(\agr{(S/K)},t)$}\quad\text{and}\\
c_S(R)&{}=\lambda_S(S/K)=\sum_{i=0}^d\dim_{\kk}\frac{K + (x)^i}{K + (x)^{i+1}}
=\HS(\agr{(S/K)},1).
\end{align*}

\medskip\noindent {\bf(\ref{mar-16-17.a})} The parameter $k$ is equal to $0$; consequently, $\n=K$ and $y_1,\dots,y_s$ is a minimal generating set for $\n=K$. 
The  hypotheses that $\n$ is not principal and $K$ is not zero (that is, $J$ is a proper ideal of $S$) guarantee that $2\le s$.  
Therefore, each $y_i$ appears in a Koszul relation on $y_1,\dots,y_s$. It follows that $$\n\subseteq \Fitt_S^1(\n)=\Fitt_S^1(K)\subseteq \n;$$ and therefore condition (\ref{(1)}) of Theorem~\ref{(0.1)} is satisfied. Furthermore, condition (\ref{(2)}) is satisfied because $1\in (K:\n)$, but $1\not \in (\n K:\n)$.
We have shown that if $k=0$, then Theorem~\ref{(0.1)} applies to the ring $R$.

\medskip\noindent{\bf(\ref{mar-16-17.c})} 
We assume that $k=1$ and $2c_S(R)\le  v(S)$. It follows that each quotient of consecutive terms in (\ref{(4)}) has
length one;  thus, $d+1 = c_S(R)$ and  \begin{equation}\label{eq801}(y_1,\dots, y_s, x_1^{d+1})\subseteq K.\end{equation} On the
other hand, $$\lambda_S(S/(y_1,\dots, y_s, x_1^{d+1})) = d+1=c_S(R)=\lambda_S(S/K);$$ and equality holds in (\ref{eq801}). 
 In this case, the ideal $B$ of $P$ from (\ref{B1}) is generated by a regular sequence. The regular sequence has length at least two because $\n$ is not zero and not principal. Furthermore, the hypothesis that $$2c_S(R)\le v(S)$$  forces 
$$A\subseteq \M^{v(S)}\subseteq \M^{2c_S(R)}=\M^{2(d+1)}\subseteq B^2.$$ All of the hypotheses of Theorem~\ref{0.2A} are satisfied by the ring $R$.

\medskip\noindent{\bf(\ref{mar-16-17.b})} We assume that $2\le k= c_S(R)-1$ and $3\le v(S)$. 
 It follows from (\ref{(4)}) that $K + (x)^2 = K$; hence, 
$$(y)+(x)^2\subseteq K.$$ 
In fact,  equality holds, because 
$$
\frac{\n}{(y)+(x)^2}
\quad\text{and}\quad \frac \n K$$
both have length $k$.

The hypothesis  $3\le v(S)$ ensures that $K=(y)+(x)^2$ is minimally generated by $y_1,\dots,y_s$, together with the $\binom{k+1}2$ monomials of degree two in $x_1,\dots,x_k$. 

\begin{chunk-no-advance}\label{any}
The ideal $K$ has at least two minimal generators; so, 
any Koszul relation involving any $y_i$ (with $1\le i\le s$) and any other minimal generator exhibits $y_i$ as an element of $\Fitt_S^1(K)$. \end{chunk-no-advance}

\noindent Furthermore, the parameter $k$ is at least two; so, if $i$ and $j$ are arbitrary with $$1\le i\neq j\le k,$$ then the relation $x_i(x_j^2)-x_j(x_ix_j)$ exhibits as $x_i$ and $x_j$ elements of $\Fitt^1_SK$.  
It is now apparent that hypothesis (\ref{(1)}) of Theorem~\ref{(0.1)} holds.  It is clear that $x_1\n\subseteq K$. The hypothesis  $3\le v(S)$ also ensures   that $x_1\n\not\subseteq \n K$. 
 We conclude that both  assumptions of 
Theorem \ref{(0.1)} hold in this case.

\begin{chunk-no-advance}\label{mar-18}
{\bf  The case $(k,c_S(R))=(2,4)$.} We make some preliminary calculations 
 before dividing this case into the two sub-cases (\ref{mar-16-17.d}) and (\ref{mar-16-17.e}). 

Examine the filtration  (\ref{(4)}) in order to see that $\HS(\agr{(S/K)})=1+2t+t^2$.
It follows, in particular, that  \begin{equation}\label{one-dim}(K+(x_1, x_2)^2)/K\end{equation} is a one-dimensional $\kk$-vector space. It also follows that $(x_1,x_2)^3\subseteq K$. 
The hypothesis $3\le v(S)$ ensures that the
$\kk$-submodule of $S$ which is spanned by $x_1^2$, $x_1x_2$, $x_2^2$ is a three-dimensional vector space, which we call $V$.
Select a basis $q_0,q_1,q_2$ for $V$  so that $q_0$ represents a basis for (\ref{one-dim}) 
 and $q_1$ and $q_2$ are in $K$. It follows that 
\begin{equation}\label{(5)} (y_1,\dots, y_s,q_1,q_2) + (x_1, x_2)^3 \subseteq K;\end{equation} and therefore,
\begin{equation}\label{3.2.3}3=\lambda_S \left(\frac{\n}{K}\right) \le \lambda_S
\left(\frac{\n}{
(y_1,\dots, y_s,q_1 ,q_2 ) + (x_1, x_2)^3}
\right).
\end{equation}On the other hand, the module on the right side of (\ref{3.2.3}) has length at most three because this module is generated by the images of 
$q_0$, $x_1$, and $x_2$, and these generators give rise to a filtration of length three whose factors are vector spaces of dimension at most one. 
We conclude that
equality holds in both (\ref{3.2.3}) and  
(\ref{(5)}); in particular,
 \begin{equation}\label{(5')} (y_1,\dots, y_s,q_1,q_2) + (x_1, x_2)^3 = K.\end{equation}

The hypothesis $4\le v(S)$ guarantees that the elements $y_1,\dots,y_s,q_1,q_2$ are the beginning of a minimal generating set for $K$; furthermore,
\begin{equation}\label{iff}(y)+(q_1,q_2)=K\iff \n^3\subseteq \n K.\end{equation}
 We consider two sub-cases.\end{chunk-no-advance}

\noindent{\bf(\ref{mar-16-17.d})} Assume $(k,c_S(R))=(2,4)$, 
$\n^3\subseteq \n K$,
 and  $5\le v(S)$. Continue the discussion of \ref{mar-18}.
It follows from 
  (\ref{(5')}) and (\ref{iff}) 
that $$(x_1, x_2)^3 \subseteq (y_1,\dots, y_s,q_1 ,q_2 ).$$ 

Recall the polynomial ring $P=\Sym_\bullet^{\kk}U$ with $P/A=S$ which was introduced Remark~\ref{83.1.1}. Select $Y_1,\dots,Y_s,X_1,X_2\in U$ to be preimages of $y_1,\dots,y_s,x_1,x_2$, respectively. Observe that $Y_1,\dots,Y_s,X_1,X_2$ is a basis for the $\kk$ vector space $U$. Select $Q_1$ and $Q_2$ in the vector space $\kk X_1^2\p\kk X_1X_2\p\kk X_2^2$
 to be preimages of $q_1$ and $q_2$, respectively. 
Notice that  $B$, from Remark~\ref{B1},  is equal to $(Y_1,\dots,Y_s,Q_1,Q_2)+A$.

Observe that $Q_1,Q_2$ is a regular sequence in $P$. Otherwise, $Q_1$ and $Q_2$ have a common factor and the
$\kk$-submodule of $P$ which is 
 spanned by $$\{X_iQ_j\mid 1\le i,j\le 2\}$$
is a vector space (which we call $V'$) of dimension at most $3$. 
On the other hand, the image of the above $V'$ in $S$
contains the four dimensional vector space 
spanned by the monomials of degree $3$ in $x_1,x_2$. 
(Keep in mind that the monomials in $y_1, \allowbreak \dots,\allowbreak y_s,\allowbreak x_1,\allowbreak x_2$ of degree $i$ represent a basis for 
$\n^i/\n^{i+1}$ for $0\le i\le v(S)-1$.) This contradiction establishes the claim that $Q_1,Q_2$ is a regular sequence in $P$.

Now that we know that $Q_1,Q_2$ is a regular sequence, a quick calculation shows that 
$$\M^{5} \subseteq (Y_1,\dots,Y_s,Q_1,Q_2)^{2}.$$
Therefore the hypothesis that $5\le v(S)$ guarantees that
$$A\subseteq \M^{v(S)} \subseteq \M^{5} \subseteq (Y_1,\dots,Y_s,Q_1,Q_2)^{2}.$$ Thus, $B$ is generated by the regular sequence $Y_1,\dots,Y_s,Q_1,Q_2$; this regular sequence has length at least two; and $A\subseteq B^2$.  All of the hypotheses of Theorem~\ref{0.2A} are in effect.

\medskip\noindent{\bf(\ref{mar-16-17.e})}  Assume $(k,c_S(R))=(2,4)$,  
$\n^3 \not\subseteq \n K$,
 and $4\le v(S)$.    
Continue the discussion of \ref{mar-18}. It follows from  
(\ref{(5')}) and (\ref{iff}) 
 that \begin{equation}\label{AH}(x_1, x_2)^3 \not\subseteq (y_1,\dots, y_s,q_1 ,q_2 ).\end{equation}
We saw in the proof of (\ref{mar-16-17.d}) that the quadratic forms $Q_1,Q_2$ of  $\kk[X_1,X_2]$ must have a common linear factor. In other words, there are homogeneous linear forms $L, L_1, L_2$ in $\kk[X_1,X_2]$ with  $Q_1 = LL_1$ and $Q_2 = LL_2$.
If $\ell_1$ and $\ell_2$ are the images in $S$ of $L_1$ and $L_2$, then
$\ell_1q_2 = \ell_2q_1$ 
is a relation on a minimal generating set for $K$ that exhibits $\ell_1$ and $\ell_2$ as elements of $\Fitt_S^1(K)$.  
The linear forms  $L_1,L_2$ must be linearly independent in $P$; therefore, 
$\kk(\ell_1,\ell_2) = \kk(x_1, x_2)$ as sub-vector spaces of $S$; hence, $x_1$ and $x_2$ are in $\Fitt_S^1(K)$. Of course, the argument of (\ref{any}) shows that all of the elements  
 $y_1,\dots, y_s$  are in $\Fitt_S^1(K)$.  
This
verifies that Hypothesis (\ref{(1)}) in Theorem \ref{(0.1)} holds for $R$. 
We verify Hypothesis
(\ref{(2)}) by showing that $q_0\in K:_S\n$ but $q_0\not\in \n K:_S\n$. The first assertion holds because
$$q_0\n\subseteq (y_1,\dots,y_s)+(x_1,x_2)^3\subseteq K.$$ For the second assertion,  write $(x_1,x_2)^3=(q_0,q_1,q_2)(x_1,x_2)$. The 
 ambient hypothesis $\n^3\not\subseteq \n K$ guarantees that 
there is an element $\ell\in \kk x_1\p \kk x_2$ with 
$\ell q_0\in K\not \in \n K$.
\qed
\end{chunk}

Assertions (\ref{mar-16-17.f}) -- (\ref{mar-16-17.g}) of Theorem~\ref{mar-16-17} are proven in \ref{Proof of Theorem -- part 2}. 
We first  prove some preliminary results.

\begin{lemma}\label{71.5} Let $B'$ be a homogeneous ideal of the standard-graded polynomial ring $P'=\kk[X_1,X_2]$. 
If 
\begin{enumerate}[\rm(a)]\item\label{71.5.a}
the embedding dimension of $P'/B'$ is 2 and the length of $P'/B'$ is $5$, or
\item\label{71.5.b} the embedding dimension of $P'/B'$ is 2,  the length of $P'/B'$ is $4$, and
the maximal generator degree of $B'$ is at least $3$,\end{enumerate} then $\Fitt^1_{P'}(B')=(X_1,X_2)$.\end{lemma}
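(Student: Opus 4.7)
The plan is to invoke the Hilbert-Burch structure theorem on $B'$ and then analyze the resulting matrix $\phi$ in each admissible Hilbert series, verifying directly that the ideal of entries $I_1(\phi)$ coincides with $(X_1,X_2)$. Because $P'=\kk[X_1,X_2]$ is a two-dimensional standard-graded regular ring and $P'/B'$ has finite length, $B'$ is a perfect ideal of grade $2$ and Hilbert-Burch yields a minimal graded free resolution
\[
0\longrightarrow\bigoplus_{j=1}^{n-1}P'(-d_j)\xrightarrow{\ \phi\ }\bigoplus_{i=1}^{n}P'(-t_i)\longrightarrow B'\longrightarrow 0,
\]
with $n=\mu(B')$ and $B'$ generated by the signed $(n-1)\times(n-1)$ minors of $\phi$. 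By definition $\Fitt^1_{P'}(B')=I_1(\phi)$, and by minimality $I_1(\phi)\subseteq(X_1,X_2)$; the task reduces to showing that the linear entries of $\phi$ span $[P']_1$.

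Next, classify the Hilbert functions $(h_i)$ of $P'/B'$. The constraints $h_0=1$, $h_1=2$, $\sum_i h_i$ equal to the length, and the property ``$h_i=0$ implies $h_j=0$ for $j\ge i$'' leave only $(1,2,2)$ and $(1,2,1,1)$ in case~(a), and only $(1,2,1)$ in case~(b). A direct count of $\dim_{\kk}B'_d-\dim_{\kk}(X_1,X_2)B'_{d-1}$ shows $\mu(B')=3$ in every subcase, with generator degrees $(2,3,3)$, $(2,2,4)$, and $(2,2,3)$ respectively; in case~(b) this uses $\mgd(B')\ge 3$ to rule out the complete-intersection alternative, and in the latter two subcases the presence of a generator of degree $\ge 3$ forces the two quadric generators to share a common linear factor. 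Since $n=3$, $\phi$ is $3\times 2$, and the two relation degrees $(d_1,d_2)$---constrained by $d_1+d_2=\sum_i t_i$ and by the absence of unit entries---are $(4,4)$, $(3,5)$, and $(3,4)$ respectively.

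In the two subcases with generator degrees $(2,2,4)$ and $(2,2,3)$, the $(3,1)$-entry of $\phi$ vanishes (by negative degree in the first, by minimality in the second), giving
\[
\phi=\begin{pmatrix}a & \ast\\ b & \ast\\ 0 & c\end{pmatrix},
\]
where $a,b,c\in[P']_1$ and the starred entries have positive degree. The two quadric generators then equal $\pm bc$ and $\pm ac$; their linear independence forces $a$ and $b$ to be linearly independent linear forms, so $I_1(\phi)\supseteq(a,b)=(X_1,X_2)$. In the remaining $(2,3,3)$ subcase, $\phi$ has quadric entries in row~$1$ and four linear entries in rows $2$ and $3$; if these four linear entries all lay in a single one-dimensional subspace $\kk\ell\subseteq[P']_1$, direct expansion shows each of the three $2\times 2$ minors of $\phi$ is divisible by $\ell$ (the minor from rows $2,3$ becomes a scalar multiple of $\ell^2$, and each of the other two factors as $\ell$ times a quadric), forcing $B'\subseteq(\ell)$. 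But then $P'/B'$ would surject onto the infinite-length ring $P'/(\ell)$, contradicting our hypothesis. Hence the four linear entries span $[P']_1$ as required.

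The main obstacle is the $(2,3,3)$ subcase: in the other two subcases, linear independence of the quadric generators directly produces two linearly independent linear entries of $\phi$, but in the $(2,3,3)$ subcase no such direct linear-independence argument is available, and the degenerate alternative must be ruled out via the more global observation that proportional linear entries would trap $B'$ in a height-one principal ideal, violating finite colength.
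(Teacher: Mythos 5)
Your proposal is correct and follows essentially the same route as the paper: Hilbert--Burch, enumeration of the possible Hilbert series $1+2t+2t^2$, $1+2t+t^2+t^3$, and $1+2t+t^2$, determination of the graded Betti numbers, and then the observation that the linear entries of $\phi$ must span $[P']_1$ (the paper pins down the back twists via socle degrees and closes with a grade-two argument, while you use degree bookkeeping and the linear independence of the quadric minors, respectively the $B'\subseteq(\ell)$ contradiction; these are equivalent in substance). One step is under-justified as written: in the $(2,3,3)$ and $(2,2,4)$ subcases the constraints ``$d_1+d_2=\sum_i t_i$ and no unit entries'' do not by themselves exclude the alternatives $(3,5)$ and $(4,4)$, and your later analysis uses the specific shape of $\phi$. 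The fix is one line in each case: a relation of degree $3$ in the $(2,3,3)$ subcase would be a column of entry degrees $(1,0,0)$, hence of the form $(\ell,0,0)^{\rm T}$ after minimality kills the constants, giving the impossible identity $\ell\cdot(\text{quadric})=0$ in the domain $P'$; and in the $(2,2,4)$ subcase the Koszul syzygy $(L_2,-L_1,0)$ of the two quadrics $LL_1,LL_2$ is a nonzero degree-$3$ element of the syzygy module, so both relation degrees cannot equal $4$.
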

\begin{proof} (\ref{71.5.a}) The Hilbert series of $P'/B'$ is either $1+2t+2t^2$ or $1+2t+t^2+t^3$. We first assume that $\HS(P'/B')=1+2t+2t^2$. Compare $\HS(P'/B')$ and $\HS(P')$ to see that $\HS(B')=t^2+4t^3+\dots$~. It follows that $B'$ is minimally generated by one quadratic form and two cubic forms. Furthermore, we observe that $$\socle(P'/B')=[P'/B']_2\cong \kk(-2)^2.$$ The ideal $B'$ of $P'$ is perfect of grade two and the  minimal homogeneous resolution of $P'/B'$ by free $P'$-modules looks like
\begin{equation}\notag 0\to P'(-4)^2\xrightarrow{\ \ \phi \ \ } \begin{matrix}P'(-2)^1\\\p\\ P'(-3)^2\end{matrix} \to P'.\end{equation} (We used \ref{2.8}.(\ref{2.8.c}).) The matrix $\phi$ has the form \begin{equation}\label{form}\phi=\bmatrix \phi_{1,1}&\phi_{1,2} \\\phi_{2,1}&\phi_{2,2}\\\phi_{3,1}&\phi_{3,2}\endbmatrix,\end{equation} where each $\phi_{i,j}$ is a homogeneous form in $P'$ and $$\deg \phi_{i,j}=\begin{cases} 2&\text{if $i=1$}\\1&\text{if $2\le i$}.\end{cases}$$
The Hilbert-Burch Theorem guarantees that $B'$ is generated by the maximal order minors of $\phi$; hence $B'$ is contained in the ideal generated by the degree one entries of $\phi$. The ideal $B'$ has grade $2$; thus,  the degree one entries of $\phi$ generate the entire ideal $(X_1,X_2)$.

We now assume that $\HS(P'/B')=1+2t+t^2+t^3$. In this case, 
the Hilbert function of $B'$ is $2t^2+3t^3+5t^4+\cdots$\ .
The ideal $B'$ has two minimal quadratic generators; but these generators have a linear factor $L$ in common; consequently,
the Hilbert function of the ideal generated by the two quadratics is $2t^2+3t^3+4t^4+\dots$.  We conclude that $B'$ is minimally generated by two quadratic forms and a homogeneous form of degree $4$. The element of $P'/B'$ represented by $L$ is in the socle of $P'/B'$ and $[P'/B']_3$ is contained in the socle of $P'/B'$. At this point we know that the  minimal homogeneous resolution of $P'/B'$ by free $P'$-modules looks like
$$0\to \begin{matrix}P'(-3)\\\p\\ P'(-5)\\\p\\ F  
\end{matrix}
\xrightarrow{\ \ \phi \ \ } \begin{matrix}P'(-2)^2\\\p\\ P'(-4)^1\end{matrix} \to P',$$for some homogeneous free $P'$-module $F$. Rank considerations show that $F$ is zero. The matrix $\phi$ has the form (\ref{form}),  
where each $\phi_{i,j}$ is a homogeneous form in $P'$, $\phi_{3,1}=0$ and $$\deg \phi_{i,j}=\begin{cases} 1&\text{if $(i,j)$ equals $(1,1)$, $(2,1)$, or $(3,2)$}\\3&\text{if  $(i,j)$ equals $(1,2)$ or $(2,2)$}.\end{cases}$$
 Once again, the Hilbert-Burch Theorem guarantees that $B'$ is generated by the maximal order minors of $\phi$; hence $B'\subseteq (\phi_{1,1},\phi_{2,1})$.  The ideal $B'$ still has grade $2$; thus,  $(\phi_{1,1},\phi_{2,1})=(X_1,X_2)$, and the proof is complete.

\medskip \noindent (\ref{71.5.b}) The ring $P'/B'$ is standard-graded; consequently,  $$[P'/B']_i=0\implies [P'/B']_{i+1}=0.$$ It follows that $\HS(P'/B')=1+2t+t^2$. The ideal $B'$ has two minimal generators of degree two and, by hypothesis, at least one minimal generator of degree at least three. The quadratic generators must have a linear factor $L$ in common as in the $\HS=1+2t+t^2+t^3$ case. It follows follows that $B'$ has a minimal cubic generator and no further generators are needed. The element $L$ represents an element of degree one  in the socle of $P'/B'$ and $[P'/B']_2$ is also contained in the socle. Rank considerations show that the socle can not be any larger than $\kk(-1)\oplus \kk(-2)$ as in the $\HS=1+2t+t^2+t^3$ case. Thus, the  minimal homogeneous resolution of $P'/B'$ by free $P'$-modules looks like
$$0\to \begin{matrix}P'(-3)\\\p\\ P'(-4)  
\end{matrix}
\xrightarrow{\ \ \phi \ \ } \begin{matrix}P'(-2)^2\\\p\\ P'(-3)^1\end{matrix} \to P',$$for some homogeneous free $P'$-module $F$.  The matrix $\phi$ has the form (\ref{form}),  
where each $\phi_{i,j}$ is a homogeneous form in $P'$, $\phi_{3,1}=0$ and $$\deg \phi_{i,j}=\begin{cases} 1&\text{if $(i,j)$ equals $(1,1)$, $(2,1)$, or $(3,2)$}\\2&\text{if  $(i,j)$ equals $(1,2)$ or $(2,2)$}.\end{cases}$$Once again, the entries in the first column of $\phi$ must generate $(X_1,X_2)$. 
\end{proof}

\begin{lemma}\label{71.8} Let $P'$ be a standard-graded polynomial ring over the field $\kk$ with maximal homogeneous ideal $\mathfrak M'$ and let $B'$ be an $\mathfrak M'$-primary homogeneous ideal of  $P'$. If the top degree of $P'/B'$ is less than the maximal generator degree of $B'$, then 
$$(B':_{P'}\mathfrak M')\neq (\mathfrak M' B':_{P'} \mathfrak M').$$
\end{lemma}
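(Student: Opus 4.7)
The plan is to exhibit a strict inclusion in a single graded component, namely degree $e-1$, where $e=\mgd(B')$. Write $d=\topdeg(P'/B')$ and let $f$ be a minimal generator of $B'$ with $\deg f = e$. The hypothesis $d<e$ should translate into a strong statement about $[B']_e$ and, via multiplication by $\mathfrak M'$, yield the asymmetry we want between $B':_{P'}\mathfrak M'$ and $\mathfrak M' B':_{P'}\mathfrak M'$.

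First I would argue that $[P']_e\subseteq B'$. This is immediate from $e>d=\topdeg(P'/B')$, since top degree means $[P'/B']_i=0$ for every $i>d$. Combined with the fact that in a standard-graded polynomial ring one has $\mathfrak M'\cdot [P']_{e-1}=[P']_e$ (assuming $e\ge 1$, which holds because $B'\subseteq\mathfrak M'$ has no degree-zero generators), this gives $z\mathfrak M'\subseteq [P']_e\subseteq B'$ for every homogeneous $z\in [P']_{e-1}$. Hence
\[
[P']_{e-1}\subseteq [B':_{P'}\mathfrak M']_{e-1}.
\]

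Next I would show the opposite for $\mathfrak M' B':_{P'}\mathfrak M'$ in degree $e-1$, by contradiction. Suppose every $z\in [P']_{e-1}$ satisfied $z\mathfrak M'\subseteq \mathfrak M' B'$. Taking the degree-$e$ part and again using $\mathfrak M'\cdot [P']_{e-1}=[P']_e$, we would obtain $[P']_e\subseteq [\mathfrak M' B']_e$, and in particular $f\in \mathfrak M' B'$. But $f$ was chosen to be a minimal generator of $B'$, so $f\notin \mathfrak M' B'$ by Nakayama's lemma. This contradiction shows
\[
[P']_{e-1}\not\subseteq [\mathfrak M' B':_{P'}\mathfrak M']_{e-1},
\]
which combined with the inclusion in the previous paragraph yields the desired inequality $B':_{P'}\mathfrak M' \neq \mathfrak M' B':_{P'}\mathfrak M'$.

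The only things to be careful about are (i) justifying $e\ge 1$ and the identity $\mathfrak M'\cdot [P']_{e-1}=[P']_e$ (both routine, since $P'$ is standard-graded and $B'$ is $\mathfrak M'$-primary, hence a proper ideal), and (ii) ensuring a degree-$e$ minimal generator actually exists, which is exactly what $e=\mgd(B')$ asserts. There is no real obstacle — the argument is a clean degree-by-degree comparison, and I do not expect any step to be substantively difficult.
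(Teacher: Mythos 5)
Your argument is correct and is essentially the paper's proof: both produce a homogeneous element of degree $\mgd(B')-1$ (the paper first notes $\mgd(B')=\topdeg(P'/B')+1$ and works in that degree) lying in $B':_{P'}\mathfrak M'$ because everything of degree above $\topdeg(P'/B')$ is in $B'$, but not in $\mathfrak M'B':_{P'}\mathfrak M'$ because the top-degree minimal generator cannot lie in $\mathfrak M'B'$ --- your Nakayama contradiction is just the paper's strict inclusion $[B']_{d}[\mathfrak M']_1\subsetneqq[B']_{d+1}$ in different clothing. One cosmetic slip: $z\mathfrak M'$ is not contained in $[P']_e$ but in $\bigoplus_{j\ge e}[P']_j$, all of which lies in $B'$ since $j\ge e>d$, so the conclusion stands.
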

\begin{proof}Let $d$ denote the top degree of $P'/B'$.
First notice  that $\mgd(B')$ is equal to $d+1$. Indeed, the inequality 
$d+1\le \mgd(B')$ is part of the hypothesis. On the other hand, $[P']_{d+1}=[\mathfrak M']_1[P']_{d}\subseteq B'$. (The equality holds because $P'$ is standard-graded.) It follows that if $p\in [P']_{d+\ell}$, for some $\ell$ with $2\le \ell$, then $p\in \mathfrak M' B'$ (again because $P'$ is standard graded) and $p$ is not a minimal generator of $B'$; in other words, $\mgd(B')\le d+1$.  

Now observe that $$[B']_{d}[\mathfrak M']_1\subsetneqq [B']_{d+1} =[\mathfrak M']_{d+1}=[\mathfrak M']_{d}[\mathfrak M']_{1}.$$ The strict inclusion occurs because $B'$  requires a minimal generator in degree ${d+1}$. It follows that there exists an element $\theta$ of $[\mathfrak M']_{d}$ with $\theta  [\mathfrak M']_{1}\not\subseteq [B']_{d}[\mathfrak M']_1$. This $\theta$ is in $(B':_{P'}\mathfrak M')\setminus (\mathfrak M' B':_{P'} \mathfrak M')$.
\end{proof}

\begin{lemma}\label{71.9} Let $P$ be a standard-graded polynomial ring over the field $\kk$ with maximal homogeneous ideal $\mathfrak M$, and  let $A\subseteq B$ be $\mathfrak M$-primary  
ideals of $P$, with $P/A$ a Gorenstein ring. Denote $P/A$ by $S$, $B/A$ by $K$, and $S/(0:_SK)$ by $R$. Let $Y_1,\dots,Y_s,X_1,\dots,X_k$ be a basis for $[P]_1$ and  $P'$ be the subring $\kk[X_1,\dots,X_k]$ of $P$. Denote the maximal homogeneous ideal of $P'$ by $\mathfrak M'$. Suppose that $B=(Y_1,\dots,Y_s,B'P)$ for some ideal $B'$ of $P'$. If 
\begin{enumerate}[\rm(a)]\item\label{71.9.a}$\Fitt^1_{P'}B'=\mathfrak M'$, \item\label{71.9.b}$\mgd(B')<\tsd (P'/B')$, and \item\label{71.9.c}$\tsd (P'/B')+1\le v(S)$,\end{enumerate} then  Theorem~{\rm\ref{(0.1)}} applies to $R$. \end{lemma}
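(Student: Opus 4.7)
My plan is to apply Theorem~\ref{(0.1)} to $R=S/(0:_SK)$ by verifying its two hypotheses: (1)~$\Fitt^1_S(K)=\n$ and (2)~$\n K:_S\n\neq K:_S\n$. Set $d=\tsd(P'/B')$, so that $\mgd(B')<d$ by (b) and $d<v(S)$ by (c). Fix a minimal homogeneous generating set $f_1,\dots,f_m$ of $B'$ in $P'$; every $f_j$ has degree at most $\mgd(B')<v(S)$ and every $Y_i$ has degree $1<v(S)$, so since $A$ contributes no homogeneous element in degrees below $v(S)$, the list $Y_1,\dots,Y_s,f_1,\dots,f_m$ is a minimal generating set of $B$ in $P$, its images $y_1,\dots,y_s,\bar f_1,\dots,\bar f_m$ form a minimal $S$-generating set of $K$, and every homogeneous syzygy of $B$ in $P$ of total degree $<v(S)$ descends faithfully to a syzygy of $K$ in $S$.

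For hypothesis (1), the commutator relations $Y_if_j-f_jY_i=0$ put each $y_i$ into $\Fitt^1_S(K)$. Hypothesis (a), $\Fitt^1_{P'}(B')=\mathfrak M'$, furnishes syzygies on $f_1,\dots,f_m$ in $P'$ whose entries collectively generate $\mathfrak M'=(X_1,\dots,X_k)$; each such syzygy has total degree at most $\mgd(B')+1\le d<v(S)$, so by the previous paragraph it lifts to a syzygy of the chosen minimal generators of $K$ in $S$, placing each $x_i$ into $\Fitt^1_S(K)$. The reverse containment $\Fitt^1_S(K)\subseteq\n$ is automatic since the presentation is minimal.

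For hypothesis (2), I will construct $\theta\in[P']_e$ for some $e\le d-1$ that represents a nonzero socle element of $P'/B'$ and for which $\theta X_i\in[B']_{e+1}\setminus[\mathfrak M'B']_{e+1}$ for some $i$; in other words, $\theta X_i$ is a minimal generator of $B'$. Granting such a $\theta$, its image $\bar\theta$ in $S$ lies in $K:_S\n$ because $\theta\mathfrak M'\subseteq B'$ (the socle condition) and $\theta(Y_1,\dots,Y_s)\subseteq(Y)P\subseteq B$. To show $\bar\theta\notin\n K:_S\n$, I decompose $\mathfrak MB$ according to $Y$-degree, using $\mathfrak M=\mathfrak M'P+(Y)P$ and $B=B'P+(Y)P$: the $Y$-free component of $\mathfrak MB$ in degree $e+1$ equals $[\mathfrak M'B']_{e+1}$, and $[A]_{e+1}=0$ since $e+1\le d<v(S)$. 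Therefore $\theta X_i\in P'$ cannot belong to $[\mathfrak MB+A]_{e+1}$, so $\bar\theta\bar X_i\in K\setminus\n K$, giving $\bar\theta\n\not\subseteq\n K$ as needed.

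The crux of the argument is producing $\theta$. My plan is to extract it from a linear entry of a minimal presentation of $B'$, whose existence is guaranteed by (a): a column of the presentation matrix containing an entry $X_k$ records a syzygy $X_kf_{j_0}+\sum_{i\neq j_0}\phi_{i,c}f_i=0$ with each $\phi_{i,c}$ homogeneous of positive degree. Combined with hypothesis (b), $\mgd(B')<d=\tsd(P'/B')$, this syzygy should force a factorization of a minimal generator $f_{j_0}$ as $\theta X_l$ in $P'$, where $\theta$ represents a socle class of $P'/B'$ in some degree $\le d-1$. Carrying out this translation---from the Fitting-ideal data of~(a) to an explicit socle-factorization of a minimal generator of $B'$, leveraging~(b) to guarantee that such a factorization occurs strictly below the top degree of $P'/B'$---is the main technical hurdle I anticipate.
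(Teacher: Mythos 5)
Your verification of hypothesis (\ref{(1)}) of Theorem~\ref{(0.1)} is essentially the paper's argument: combine a minimal presentation of $B'$ with the Koszul relations involving $Y_1,\dots,Y_s$, use (\ref{71.9.c}) to see that the chosen generators remain a minimal generating set of $K$, and conclude that the entries of the resulting relations put $\n$ inside $\Fitt^1_S(K)$. (Minor slip: not every column of the presentation matrix has total degree at most $\mgd(B')+1$, only the columns containing linear entries; but those linear entries already generate $\mathfrak M'$, so this is harmless.) The genuine gap is hypothesis (\ref{(2)}): you never produce the element $\theta$, and you say explicitly that its construction is ``the main technical hurdle'' you anticipate. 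That construction is the real content of the lemma; in the paper it is exactly Lemma~\ref{71.8}, whose proof shows that $B'$ must have a minimal generator in degree $\tsd(P'/B')+1$, hence that there is $\theta'\in[P']_{\tsd(P'/B')}$ with $\theta'\mathfrak M'\subseteq B'$ but $\theta'[\mathfrak M']_1\not\subseteq \mathfrak M'B'$, and this $\theta'$ is then pushed into $S$ using (\ref{71.9.c}). Your conditional bookkeeping (the $Y$-free component of $\mathfrak M B$, and $[A]_{e+1}=0$ below degree $v(S)$) is fine, but without the existence of $\theta$ the proof of hypothesis (\ref{(2)}) is missing.

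Moreover, the route you sketch would not close this gap. A linear entry of the presentation matrix only tells you $X_kf_{j_0}\in\mathfrak M'B'$, which carries no information ($B'/\mathfrak M'B'$ is killed by all of $\mathfrak M'$), and nothing in (\ref{71.9.a}) and (\ref{71.9.b}) forces a minimal generator of $B'$ to factor as a linear form times a socle representative in degree at most $\tsd(P'/B')-1$. Note also the direction of the inequality: the mechanism that actually works, Lemma~\ref{71.8}, requires $\tsd(P'/B')<\mgd(B')$, which is what holds in the paper's applications (see Lemma~\ref{71.5} and the proof of Theorem~\ref{mar-16-17}(\ref{mar-16-17.f})); the inequality printed in (\ref{71.9.b}) appears to be reversed, and by following it literally you aimed your construction at a socle element strictly below the top degree, whereas the element that does the work lives in degree exactly $\tsd(P'/B')$ and arises from $B'$ requiring a generator in degree $\tsd(P'/B')+1$.
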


\begin{proof}Let $d$ denote the top degree of $P'/B'$ and $\n$ denote the maximal homogeneous ideal of $S$. Hypothesis (\ref{71.9.b}) enables us to apply Lemma~\ref{71.8} in order  to learn that there is an element $\theta'\in [P']_{d}$ with $\theta'\mathfrak M'\subseteq B'$ but $\theta'[\mathfrak M']_{1}\not\subseteq [\mathfrak M' B']_{d+1}$. Observe that $\theta'$ is also in $[P]_{d}$ with $$\theta'\mathfrak M\subseteq B\quad\text{but}\quad \theta'[\mathfrak M]_{1}\not\subseteq [\mathfrak M B]_{d+1}.$$ Apply the natural quotient map $\pi:P\to S$. Let $\theta\in [S]_{d}$ denote  $\pi(\theta')$. Hypothesis (\ref{71.9.c}) guarantees that $$\pi:[\mathfrak M B]_{d+1}\to [\mathfrak n K]_{d+1}$$ is an isomorphism; thus $\theta\in [S]_{d}$ with $$\theta\mathfrak n\subseteq K\quad\text{but}\quad \theta[\mathfrak n]_{1}\not\subseteq [\mathfrak n K]_{d+1},$$
and Hypothesis (\ref{(2)}) of Theorem~\ref{(0.1)} holds for $R$.

Now we verify that Hypothesis  (\ref{(1)}) of Theorem~\ref{(0.1)} holds for $R$.
Let $f_1,\dots, f_{c_1}$ be a minimal homogeneous generating for $B'$ and 
$$(P')^{c_2}\xrightarrow{\phi'} (P')^{c_1}\xrightarrow{\pmb f=\bmatrix f_1,\dots, f_{c_1}\endbmatrix } B'$$ be a presentation of $B'$. It follows that
$$P^{c_2+sc_1+\binom s2}\xrightarrow{\phi} P^{c_1+s}\xrightarrow{\bmatrix \pmb f&Y_1&\dots& Y_{s}\endbmatrix } B,$$with
$$\phi=\left[\begin{array}{c|c|c|c|c|c|c|c}
\phi'&Y_1 I_{c_1}&Y_2 I_{c_1}&\cdots&Y_s I_{c_1}&&&\\\hline
&-\pmb f&&&&-Y_2&&\\\hline
&&-\pmb f&&&Y_1&&\\\hline
&& &\ddots&&&\cdots &\\\hline
&& &&&&&-Y_s\\\hline
&&&&-\pmb f&&&Y_{s-1}\\
\end{array}\right],$$
is a presentation $B$ by free $P$-modules with $f_1,\dots, f_{c_1}, Y_1,\dots, Y_{s}$
a minimal generating set for $B$. Hypothesis (\ref{71.9.a}) guarantees that $I_1(\phi')=\mathfrak M'$. It follows that \begin{equation}\label{71.10.1}I_1(\phi)=\mathfrak M.\end{equation} Apply $-\t_P S$ to obtain the complex 
\begin{equation}\label{cx}S^{c_2+sc_1}\xrightarrow{\phi\t_P S } S^{c_1+s}\xrightarrow{\bmatrix \pmb f&Y_1&\dots& Y_{s}\endbmatrix\t_PS } K.\end{equation} Observe that $\pi_1(f_1),\dots, \pi(f_{c_1}),\pi(Y_1),\dots,\pi(Y_s)$ is still a minimal generating set for $K$ because Hypothesis (\ref{71.9.c}) ensures that $v(S)$ is larger than the maximal generator degree of $K$. (See the proof of Lemma~\ref{71.8}, if necessary.) The fact that (\ref{cx}) is a complex ensures that $I_1(\pi(\phi))\subseteq \Fitt^1_S(K)$. On the other hand, (\ref{71.10.1}) ensures that
$I_1(\pi(\phi))$, which is equal to $\pi(I_1(\phi))$, is equal to $\pi(\mathfrak M)=\mathfrak n$. Thus, $\Fitt^1_S(K)=\n$ and Hypothesis  (\ref{(1)}) of Theorem~\ref{(0.1)} holds for $R$.
\end{proof}

\begin{chunk}\label{Proof of Theorem -- part 2}
{\bf Proof of assertions (\ref{mar-16-17.f}) -- (\ref{mar-16-17.g}) of Theorem~\ref{mar-16-17}.} Throughout this proof, the ring $S$ of Theorem~\ref{mar-16-17} is standard-graded over $[S]_0=\kk$, the polynomial ring $P$ of Remark~{83.1.1} is standard-graded over $[P]_0=\kk$, the ideals $K$ in $S$ of Remark~\ref{apr14} and $A$ and $B$ in $P$ of Remarks~\ref{83.1.1} and \ref{B1} are homogeneous.
The ring $S/K$ is standard-graded and is equal to the associated graded ring $\agr{(S/K)}$.

Recall the elements $x_1,\dots,x_k,y_1,\dots, y_s$ in $S$ and the corresponding elements $X_1,\allowbreak \dots,\allowbreak X_k,\allowbreak Y_1\dots,\allowbreak  Y_s$ of P from the beginning of \ref{Proof of Theorem}. Let $P'$ be the subring 
$$P'=\kk[X_1\dots,X_k]$$ of $P$. Observe that $B=(Y_1,\dots,Y_s)+B'P$ for some  homogeneous ideal $B'$ of $P'$; furthermore, there is a natural 
isomorphism $(P'/B')\cong S/K$, which is induced by
\begin{equation}\label{P'/B'}\xymatrix{P'\ar@{^(->}[r]&P\ar@{->>}[r]^{\pi}& S.}\end{equation}

\medskip\noindent {\bf (\ref{mar-16-17.f})} We assume that $(k,c_S(R))=(2,5)$ and $4\le v(S)$.
There are two possible Hilbert series for $S/K$ which are consistent with the hypothesis   $(k,c_S(R))=(2,5)$; that is,  $\HS(S/K)$ is equal to 
$1+2t+t^2+t^3$ or $1+2t+2t^2$. Recall the standard-graded polynomial $P'=\kk[X_1,X_2]$, the homogeneous ideal $K'$ of $P'$ and the isomorphism $P'/B'\cong S/K$ of (\ref{P'/B'}). Apply Lemma~\ref{71.5} to see that $\Fitt^1_{P'}(B')$ is equal to the maximal homogeneous ideal $\M'=(X_1,X_2)$ of $P'$. The proof of Lemma~\ref{71.5} identifies $\tsd(P'/B')$ and $\mgd(B')$ for both choices of $\HF(P'/B')$. In each case, the inequality $\tsd(P'/B')<\mgd(B')$ holds; furthermore in each case $\tsd(P'/B')$ is at most $3$. All of the hypotheses of Lemma~\ref{71.9} are in effect. It follows that Theorem~\ref{(0.1)} applies to $R$.

\begin{chunk-no-advance}\label{mar-23}
{\bf  The case $(k,c_S(R))=(3,5)$.} 
Examine the filtration  (\ref{(4)}) in order to see that $\HS(S/K)=1+3t+t^2$.
It follows, in particular, that  \begin{equation}\label{one-dim'}(K+(x_1, x_2, x_3)^2)/K\end{equation} is a one-dimensional $\kk$-vector space and that $(x_1,x_2,x_3)^3\subseteq K$.
Thus, $$K = (y_1,\dots, y_s, q_1,\dots, q_5) + (x_1, x_2, x_3)^3,$$ 
where $q_1,\dots, q_5$ are linearly independent quadratic forms in $(x_1,x_2,x_3)$.
Furthermore,  $y_1,\dots, y_s, q_1,\dots, q_5$ is the beginning of a minimal generating set for $K$.
We show that Hypothesis (\ref{(1)}) of Theorem~\ref{(0.1)} is satisfied. Consider the ten products
\begin{equation}\label{no x_3^3}\{x_iq_j\in S\mid i \in \{1, 2\}\text{ and $1 \le j \le 5$} \},\end{equation}
which is a subset of the ten-dimensional vector space generated by the monomials of degree $3$ in $x_1,x_2,x_3$. (We have used the hypothesis that $3\le v(S)$.)
The monomial $x_3^3$ is not in the vector space spanned by (\ref{no x_3^3}); so the elements of (\ref{no x_3^3}) must be
linearly dependent. This gives rise to a non-trivial relation $\sum_i q_i\ell_i=0$, with each $\ell_i$ a linear form in $x_1$ and $x_2$. The vector space generated by $\ell_1,\dots,\ell_5$ must have dimension more than one because $q\ell=0$ is not possible in $S$ with $q$ a non-zero quadratic form in  $x_1,x_2,x_3$ and $\ell$ a non-zero linear form  in  $x_1,x_2,x_3$. (Again, we have used the hypothesis that $3\le v(S)$.) Thus, $(x_1,x_2)\subseteq \Fitt_S^1(K)$.
A permutation of the subscripts yields that
$x_3$ is  in $\Fitt_S^1(K)$, as
well.  We conclude that Hypothesis  (\ref{(1)}) holds for $R$.

At this point we have identified the beginning, $y_1,\dots,y_s,q_1,\dots,q_5$, of a minimal generating set for $K$ and the beginning, $[S/K]_2$, of the socle of $S/K$. The exact argument we use to finish the proof depends on whether more generators are needed to generate all of $K$ and whether more socle elements are needed to generate all of $\socle(S/K)$.
\end{chunk-no-advance}

\noindent {\bf(\ref{mar-16-17.h})} Assume $(k,c_S(R))=(3,5)$ and $3\le \maxgendeg(K)$. 
We proved in \ref{mar-23} that Hypothesis~(\ref{(1)}) is satisfied. We now consider 
Hypothesis~(\ref{(2)}).
The inclusion $(x_1, x_2, x_3)^3 \subseteq K$ guarantees that $(x_1, x_2, x_3)^2 \subseteq (K :_S \n)$. On the other hand, the hypothesis that some element of $
(x_1, x_2, x_3)^3$ is a minimal generator of $K$ ensures that   $$(x_1, x_2, x_3)^2 \not\subseteq (\n K :_S \n);$$ so, Hypothesis~(\ref{(2)}) is satisfied and Theorem \ref{(0.1)} applies to $R$ in this case.

\medskip\noindent {\bf(\ref{mar-16-17.i})} Assume $(k,c_S(R))=(3,5)$ and
$2\le \dim_{\kk}\socle(S/K)$. We proved in \ref{mar-23} that Hypothesis~(\ref{(1)}) is satisfied. We now consider 
Hypothesis~(\ref{(2)}). 
 In this case, there must be an element $\ell$ of $[S]_1$ which represents an element of the socle of $S/K$. It is clear that $\ell\in (K:_S\n)$. On the other hand, $\ell\n$ contains some of the minimal quadratic generators of $K$; so $\ell\not\in (\n K:_S\n)$. Thus, Hypothesis~(\ref{(2)}) is satisfied and
 Theorem~{\rm\ref{(0.1)}} applies to $R$.

\medskip\noindent {\bf(\ref{mar-16-17.g})} Assume $(k,c_S(R))=(3,5)$,  $\maxgendeg(K)\le 2$,  $\dim_{\kk}\socle(S/K)=1$, and $5\le v(S)$. We use the notation introduced in \ref{mar-23}. The hypotheses ensure that $K$ is minimally generated by $y_1,\dots,y_s,q_1,\dots,q_5$ and the socle of $S/K$ is equal to the one-dimensional vector space $[S/K]_2$. 
 The ideal $B$ of Remark~\ref{B1} is equal to $$(Y_1,\dots,Y_s,Q_1,\dots,Q_5)+A,$$ where  $P$ is the standard-graded polynomial ring $\kk[Y_1,\dots,Y_s,X_1,X_2,X_3]$ and $Q_i$ is the  quadratic form in $X_1,X_2,X_3$ which represents $q_i$. The hypothesis $5\le v(S)$ implies that 
$$A\subseteq \M^{v(S)}\subseteq \M^5\subseteq (Y_1,\dots,Y_s,Q_1,\dots,Q_5),$$ where $\M$ is the maximal homogeneous ideal of $P$; consequently, 
$$B=(Y_1,\dots,Y_s,Q_1,\dots,Q_5).$$Observe that $A\subseteq \M^5$, $S=P/A$ is an Artinian  Gorenstein local ring, and $$[\socle(P/B)]_1=[\socle(S/K)]_1=0.$$ 
Theorem~\ref{0.2B} applies to $P/(A:_PB)$, which according to Remark~\ref{B1}, is equal to $R$. This completes the proof of Theorem~\ref{mar-16-17}
\qed \end{chunk}

\section{The first splitting theorem.}\label{FST}
 
Although not explicitly stated there, Theorem~\ref{(0.1)}  was shown in \cite{SV}.
The proof we give is essentially the dual of the earlier proof.

\begin{lemma}\label{lemma-for-0.1} Let $(S,\n,\kk)$ be a local ring, $J$ be an ideal of $S$, 
and $d:F\to G$ be a homomorphism of free $S$-modules of finite rank. Suppose that 
$I_1(d)\subseteq \n$ and $$(J:_S\n)\cdot  I_1(d)\not\subseteq \n J.$$ 
Then $\kk$ is a direct summand of the $\frac SJ$-module $\ker (d\otimes_S \frac SJ)$.\end{lemma}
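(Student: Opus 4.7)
The plan is to pull a specific element of $F$ out of the hypothesis, show that its image in $\ker(d\otimes_S \tfrac{S}{J})$ generates a copy of $\kk$, and then use a coordinate of $d$ itself as the functional that witnesses this copy as a direct summand.

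First I would choose bases of $F$ and $G$ so that $d$ is represented by a matrix with entries $d_{ij}$. Since $I_1(d)$ is generated by these entries, the assumption $(J:_S\n)\cdot I_1(d)\not\subseteq \n J$ produces $x\in J:_S\n$ and an entry $a=d_{ij}$ with $xa\notin \n J$. Let $\bar m$ denote the image of $xe_j$ in $\overline F:=F/JF$, where $e_j$ is the $j$-th basis vector of $F$.

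Next I would record the easy properties of $\bar m$. The image $d(xe_j)$ is $x$ times the $j$-th column of $d$, whose entries lie in $I_1(d)\subseteq \n$, so $d(xe_j)\in x\n G\subseteq JG$ and $\bar m\in \ker(d\otimes_S \tfrac{S}{J})$. If $xe_j\in JF$ then $x\in J$, forcing $xa\in J\cdot I_1(d)\subseteq \n J$, a contradiction; hence $\bar m\neq 0$. The annihilator of $\bar m$ in $S/J$ is $(J:_S x)/J$, which contains $\n$ because $x\in J:_S\n$ and is a proper ideal because $x\notin J$; thus it is exactly $\n/J$ and $(S/J)\bar m\cong \kk$.

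The main step is to show $(S/J)\bar m$ splits off from $\bar N:=\ker(d\otimes_S \tfrac{S}{J})$. Since $\bar N$ is finitely generated over the local ring $S/J$ and $(S/J)\bar m\cong \kk$, this reduces by Nakayama's lemma to verifying $\bar m\notin (\n/J)\bar N$. Here I would use $d$ itself as a dual: let $\pi_i:G\to S$ be the $i$-th coordinate projection and set $\phi:=\pi_i\circ d:F\to S$. Writing $N:=d^{-1}(JG)\subseteq F$, so that $\bar N=N/JF$, one computes $\phi(N)\subseteq J$, $\phi(\n N)\subseteq \n J$, and $\phi(JF)\subseteq J\cdot I_1(d)\subseteq \n J$, whereas $\phi(xe_j)=x\, d_{ij}=xa\notin \n J$. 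Therefore $xe_j\notin \n N+JF$, which is precisely the statement $\bar m\notin (\n/J)\bar N$. Once this is in hand, extending $\bar m$ to a minimal generating set of $\bar N$ and sending the other generators to zero produces the required $(S/J)$-linear retraction $\bar N\to (S/J)\bar m\cong \kk$. The hard part is the verification that $\bar m$ escapes $(\n/J)\bar N$; the crucial observation is that the single functional $\pi_i\circ d$ carries $\n N+JF$ into $\n J$ yet carries $xe_j$ outside $\n J$.
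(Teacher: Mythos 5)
Your proposal is correct and follows essentially the same route as the paper: you take the element $xe_j$ (the paper's ${\tt s}\theta$), show it lies in $\ker(d\otimes_S S/J)$, is killed by $\n$, and is a minimal generator — your computation with $\phi=\pi_i\circ d$ showing $\phi(\n N+JF)\subseteq \n J$ while $\phi(xe_j)\notin\n J$ is just a repackaging of the paper's contradiction argument — and then split off the resulting copy of $\kk$ by the same standard minimal-generating-set device the paper uses.
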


\begin{proof}Let $R$ denote $\frac SJ$. 
If $f$ is an element of $F$, then let $\bar f$ denote the image of $f$ in $F\otimes _SR$, which is equal to $\frac {F}{JF}$.

According to the hypothesis, 
there are elements ${\tt s}\in S$, $\theta\in F$, and $\alpha\in G^*$ such that 
${\tt s} \n\subseteq J$ and 
 ${\tt s}\alpha(d(\theta))$ 
is a minimal generator of $J$.
Observe first that   $\overline{{\tt s}\theta}$ is in $\ker(d\otimes_SR)$.
Indeed, 
$$d({\tt s}\theta)\in {\tt s}\im d\subseteq {\tt s} \n G\subseteq JG.$$
Now observe that $\overline{{\tt s}\theta}$ is a minimal generator of $\ker(d\otimes_SR)$. Otherwise, there are elements $s_i\in \n$ and $\theta_i \in F$ with
${\tt s} \theta-\sum_is_i \theta_i\in JF$ and $d\theta_i\in JG$. In this case,
\begin{align*}{\tt s} \alpha(d(\theta))&{}=\alpha\Big(\sum_i s_i d(\theta_i)+\text{an element of $\n J\cdot G$}\Big)\\&{}= \alpha(\text{an element of $\n J\cdot G$})\in \n J,\end{align*}which is a contradiction.

Let $\overline{{\tt s}\theta}, \phi_2,\dots,\phi_m$ be a minimal generating set for 
$\ker(d\otimes_SR)$. Observe that as an $R$-module, $\ker(d\otimes_SR)$ is equal to the direct sum
$$R\overline{{\tt s}\theta}\oplus R(\phi_2,\dots,\phi_m).$$
It is clear that $R\overline{{\tt s}\theta}+ R(\phi_2,\dots,\phi_m)=\ker(d\otimes_SR)$. Furthermore, if $$r \overline{{\tt s}\theta}\in R(\phi_2,\dots,\phi_m),$$ for some $r$ in $R$, then the definition of the $\phi$'s ensures that $r$ is in the maximal ideal of $R$ and  
$r\overline{{\tt s}\theta}=0$ in $F/JF$.

We have shown that $R\overline{{\tt s}\theta}$ is a non-zero direct summand of $\ker(d\otimes_SR)$. The proof is complete because $\n {\tt s}\subseteq J$; and therefore $R\overline{{\tt s}\theta}$ is isomorphic to $\kk$.  \end{proof}

\begin{chunk}\label{beats me}{\bf Proof of Theorem~\ref{(0.1)}.} We apply Lemma~\ref{lemma-for-0.1} to the Artinian Gorenstein local ring $(S, \n, \kk)$. Take $J$ to be the  
ideal $(0:_SK)$, which defines $R$, from the statement of Theorem~\ref{(0.1)}, and take $d$ to be a minimal presentation matrix for $K$ as an $S$-module.  It follows that $I_1(d)=\Fitt_S^1(K)$, see \ref{con1}.
According to Gorenstein duality, see \ref{2C}, the hypothesis $(J:_S\n)\cdot I_1(d)\not\subseteq \n J$ of Lemma~\ref{lemma-for-0.1} is equivalent to
\begingroup\allowdisplaybreaks\begin{align*}
&(0:_S\n J)\not\subseteq \big(0:_S\big((J:_S\n)\cdot I_1(d)\big)\big)\\\iff&
((0:_S J):_S\n)\not\subseteq \big(\big(0:_S(J:_S\n)\big) :_S     I_1(d)\big)\\
\iff&(K:_S\n)\not\subseteq (0:_S(0:_S\n K)):_SI_1(d)\\\iff&
(K:_S\n)\not\subseteq (\n K:_SI_1(d)).
\end{align*}\endgroup
In Theorem~\ref{(0.1)} the hypotheses $\Fitt_S^1(K) = \n$ and 
$K :_S \n \not\subseteq \n K :_S \n$
are both in effect; hence Lemma~\ref{lemma-for-0.1} applies and the proof of Theorem~\ref{(0.1)} is complete. \qed\end{chunk}

\section{The method for the second and third splitting theorems.}\label{68}

The main results in this section are Lemmas~\ref{75.3-ci} and \ref{75.3}. In each result   we identify a particular summand of a second syzygy. These results are used in Section~\ref{65} to prove Theorems~\ref{0.2A} and \ref{0.2B}.

\begin{setup}\label{75.2}Let $P$ be a commutative Noetherian ring,   $A\subseteq B$ be ideals of $P$ with $A\subseteq B^2$, and $\Delta$ be an element of $P$ with 
\begin{enumerate}[\rm(a)]
\item\label{75.2.a}$(A:_PB)=(A,\Delta)$, and
\item\label{75.2.b}$(A:_P\Delta)=B$.
\end{enumerate}
Let 
$S$ denote the ring $S=P/A$, and $R$ denote the ring $R=P/(A,\Delta)$. 
Let $$\text{$\bar{\phantom{x}}$ denote  $-\t_PS$ \quad  and \quad  $\dbar{\phantom{x}}$ denote $-\t_PR$.}$$
We study $BS$ as an $R$-module. 
\end{setup}

\begin{remark} \label{2.4.1} If Setup~\ref{75.2} is in effect and $S$ is  Artinian, Gorenstein, and local, then $BS$ is the canonical module of $R$ because 
$S$ and $R$ have the same Krull dimension 
 and $S$ is a Gorenstein local ring which maps onto $R$. It follows that the canonical module of $R$ is
\begin{equation}\notag 
\textstyle\omega_R=\Hom_S(R,S)=\Hom_{P/A}\left(\frac P{(A,\Delta)},\frac PA\right)=\frac {A:_P\Delta}{A}=\frac BA=BS.\end{equation}\end{remark}

\begin{assumption}\label{75.3.a} In the notation of Setup~{\rm{\ref{75.2}}}, let 
$$B_2\xrightarrow{b_2}B_1\xrightarrow{b_1}B\to 0$$ be a presentation of $B$ by free $P$-modules. Assume that there exists a $P$-module homomorphism $L: B_1\to B_2$ with 
$$b_2\circ L \equiv \Delta\cdot \id_{B_1} \mod AB_1.$$
\end{assumption}

\begin{lemma}\label{75.3-ci} Assume that the notation and hypotheses of {\rm\ref{75.2}} and {\rm\ref{75.3.a}} are in effect. 
 If   $B$  is  generated by a regular  sequence  and  $B\cdot I_1(L)\subseteq (A,\Delta)$, then  the $R$-module $B/B^2$ is a direct summand of the $R$-module $\syz_2^R(BS)$. 
\end{lemma}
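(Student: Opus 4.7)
The plan is to construct an explicit injection $\bar\iota:B/B^2\hookrightarrow\syz_2^R(BS)$ from the map $L$ given by Assumption~\ref{75.3.a}, and then split it off. First I set up an $R$-free presentation of $\syz_2^R(BS)$. Since $B$ is generated by the $P$-regular sequence $g_1,\dots,g_n$, I take the Koszul resolution of $B$ with $B_1=P^n$, $B_2=\wedge^2 P^n$, and the standard Koszul differentials. I then choose a free $P$-module $F_0$ surjecting onto $A$, and use $A\subseteq B^2=b_1(BB_1)$ to lift this surjection to $\phi_0:F_0\to BB_1\subseteq B_1$. A mapping-cone argument makes
\[
G_1=(B_2\oplus F_0)\otimes_P R\ \xrightarrow{(\bar{b_2},\bar{\phi_0})}\ G_0=B_1\otimes_P R\twoheadrightarrow BS
\]
a partial $R$-free presentation of $BS$, so $\syz_2^R(BS)$ is the kernel of $G_1\to G_0$ up to a free summand.

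Next I define $\iota:R^n\to G_1$ by $\iota(e_i)=(\bar L(e_i),0)$. Assumption~\ref{75.3.a} gives $b_2L\equiv\Delta\,\id_{B_1}\pmod{AB_1}$, so reducing modulo $(A,\Delta)B_1$ places $\iota(R^n)$ inside $\syz_2^R(BS)$. To compute $\ker\iota$: the hypothesis $B\cdot I_1(L)\subseteq(A,\Delta)$ forces $g\bar L(e_i)=0$ in $B_2\otimes R$ for every $g\in B$, so $\bar BR^n\subseteq\ker\iota$. Conversely, if $\sum r_j\bar L(e_j)=0$ in $B_2\otimes R$, I lift $r_j$ to $\tilde r_j\in P$ and compose with $b_2$; the relation $b_2L(e_j)\equiv\Delta e_j\pmod{AB_1}$ then yields $\Delta\tilde r_j\in(A,\Delta)$ for each $j$, and Setup~\ref{75.2}(b), $(A:_P\Delta)=B$, together with the fact that $\Delta\in B$ (forced by $b_2(B_2)\subseteq BB_1$ in Assumption~\ref{75.3.a}) gives $\tilde r_j\in B$. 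Thus $\ker\iota=\bar BR^n$. Since $R/\bar B\cong P/B$ and the regular-sequence hypothesis makes $B/B^2$ free of rank $n$ over $P/B$ on $\bar g_1,\dots,\bar g_n$, the induced map $\bar\iota:B/B^2\hookrightarrow\syz_2^R(BS)$ is an injection.

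The main obstacle is the splitting. I plan to construct a retraction $\psi:\syz_2^R(BS)\to B/B^2$ by producing an $R$-linear map $\rho_2:B_2\otimes R\to B/B^2$ with $\rho_2(\bar L(e_i))=\bar g_i$, extending by zero on $F_0\otimes R$, and restricting to $\syz_2^R(BS)$. Because $B$ is a complete intersection, $B_2=\wedge^2 B_1$ carries the usual Koszul contraction pairings with $B_1$, and I expect $\rho_2$ to arise as a contraction against the transpose of $L$. The hypothesis $B\cdot I_1(L)\subseteq(A,\Delta)$ should make $\rho_2$ well-defined modulo $B^2$, and I will also need to check that contributions coming from the $F_0\otimes R$-summand of $G_1$ vanish upon restriction to $\syz_2^R(BS)$. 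Carrying out this construction and verifying the compatibilities is the most technical step I foresee.
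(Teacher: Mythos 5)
Your first half is sound and is essentially the paper's own route to the injection: your presentation $G_1\to G_0\to BS$ is the reduction modulo $(A,\Delta)$ of the mapping cone used in Notation~\ref{75.4}, and your computation of $\ker\iota$ is Proposition~\ref{82.1}(a). One step there is misstated: from $\sum_j\tilde r_jL(e_j)\in(A,\Delta)B_2$ you do not get the (vacuous) statement $\Delta\tilde r_j\in(A,\Delta)$; applying $b_2$ and Assumption~\ref{75.3.a} gives $\Delta\big(\sum_j\tilde r_je_j-b_2(\beta_2)\big)\in AB_1$ for some $\beta_2\in B_2$, and then $(A:_P\Delta)=B$ together with the Koszul fact $b_2(B_2)\subseteq BB_1$ yields $\tilde r_j\in B$. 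That is fixable and is exactly how the paper argues.

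The genuine gap is the splitting, which is the actual content of the lemma, and the route you sketch cannot work. If there were an $R$-linear map $\rho_2:B_2\otimes_PR\to B/B^2$ defined on the whole free module (extended by zero on $F_0\otimes_PR$) with $\rho_2(\dbar L(e_i))=\bar g_i$, then the composite $B/B^2\xrightarrow{\bar\iota}G_1\xrightarrow{\rho}B/B^2$ would be the identity, so $B/B^2\cong(R/BR)^{n}$ would be a direct summand of the free $R$-module $G_1$; since $B$ annihilates $B/B^2$, this would split the surjection $R\to R/BR$ and (in the local situation of every application) force $BR=0$, i.e.\ $B=(A,\Delta)$, which is false in general and not implied by the hypotheses. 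Equivalently, prescribing $\rho_2(\dbar L(e_i))=\bar g_i$ on a free module amounts to finding a left inverse over $P/B$ of the matrix $L$ reduced mod $B$, impossible when the entries of $L$ are non-units. So the retraction must be defined only on $\syz_2^R(BS)$ itself, and for that you need two things your proposal does not supply: a generating set of $\syz_2^R(BS)$ and a complementary submodule to $\im\dbar L$ inside it. This is what the paper's machinery provides: using a comparison map $c_\bullet$ from a resolution of $A$ to the Koszul complex of $B$ with $c_1(A_1)\subseteq BB_1$, and the Koszul multiplication $\phi$, it builds the map $\delta_{2,\ell}$ and proves in Proposition~\ref{75.5} that $\im\dbar{\delta_{2,\ell}}+\im\dbar L=\syz_2^R(BS)$; the splitting is then obtained in \ref{The proof of Lemma 75.3-ci} by showing $\im\dbar{\delta_{2,\ell}}\cap\im\dbar L=0$, using $c_1(A_1)\subseteq BB_1$, $b_2(B_2)\subseteq BB_1$, $(A:_P\Delta)=B$, and $B\cdot I_1(L)\subseteq(A,\Delta)$. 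Without the generation statement and this complement, the direct-summand conclusion is not reached.
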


\begin{Remark} Our application of Lemma~\ref{75.3} occurs when $B''$ is generated by variables which are not involved in $B'$. One can read the statement of  Lemma~\ref{75.3} with this application in mind; indeed, the statement is   meaningful (and easier to digest) if $B''$, $B_1''$, and $B_2''$ all are zero and $b_1=b_1'$, $b_2=b_2'$ and $L=L'$. \end{Remark}

\begin{lemma}\label{75.3}
Assume that the notation and hypotheses of {\rm\ref{75.2}} and {\rm\ref{75.3.a}} are in effect.  
Suppose that there are free $P$-module  decompositions of $B_1$ and $B_2$ of the form{\rm:}  
$$B_1=B_1'\p B_1''\quad\text{and}\quad B_2=B_2'\p B_2''$$ 
and corresponding decompositions of the homomorphisms $b_1$,  $b_2$, and $L$ of the form{\rm:} 
$$b_1=\bmatrix b_1'&b_1''\endbmatrix,\quad  b_2=\bmatrix b_2'&b_2'''\\0&b_2'',\endbmatrix,\quad \text{and}\quad L=\bmatrix L'&0\\0&L''\endbmatrix,$$
where 
\begin{align*} &b_1':B_1'\to B,&&b_1'':B_1''\to B,&& b_2':B_2'\to B_1',&&b_2'':B_2''\to B_1'',\\
 &b_2''':B_2''\to B_1', &&L':B_1'\to B_2', \quad \text{and}&& L'':B_1''\to B_2'' \end{align*}
are homomorphisms of free $P$-modules. Let $B'$ and $B''$ be the ideals $b_1'(B_1')$ and $b_1''(B_1'')$ of $P$, respectively. 
Suppose further that 
\begin{enumerate} [\rm(a)]
\item\label{75.3.b}
$B\cdot I_1(L')\subseteq (A,\Delta)$, 
\item\label{75.3.c}  
$L'\circ b_2'\equiv \Delta \cdot \id_{B_2'}\mod AB_2'$,
\item\label{75.3.d}
$I_1(L'\circ b_2''')\subseteq (A,\Delta)$, and
\item\label{5.5.d} the complex $B_2'\xrightarrow{b_2'}B_1'\xrightarrow{b_1'}B'\to 0$ is exact.
\end{enumerate}
Then the $R$-module $B'/BB'$ is a direct summand of the $R$-module $\syz_2^R(BS)$.  
\end{lemma}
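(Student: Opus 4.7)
The plan is to identify $\syz_2^R(BS)$ with $\ker\bar{b_2}$ and then exhibit $B'/BB'$ as a direct summand of that kernel by producing both an inclusion and a retraction, following the philosophy of Lemma~\ref{75.3-ci}.

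The first step is to verify that
$$B_2\otimes_P R \xrightarrow{\bar{b_2}} B_1\otimes_P R \xrightarrow{\bar{b_1}} BS \to 0$$
is an $R$-free presentation of $BS$. Surjectivity onto $BS=B/A$ is clear since $b_1$ is surjective onto $B$. The relation in Assumption~\ref{75.3.a}, $b_2\circ L\equiv\Delta\cdot\id_{B_1}\bmod AB_1$, forces $\Delta B_1\subseteq\im b_2+AB_1$, and therefore $\im b_2+(A,\Delta)B_1 = \im b_2+AB_1$; this matches $\ker\bar{b_1}$ with $\im\bar{b_2}$ inside $B_1/(A,\Delta)B_1$. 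Hence $\syz_2^R(BS)=\ker\bar{b_2}$.

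The second step is to build the candidate embedding $\tilde L': B'/BB'\hookrightarrow\ker\bar{b_2}$ from $\bar L':B_1'\otimes R\to B_2'\otimes R\subseteq B_2\otimes R$. The block-diagonal form of $L$ together with the $(1,1)$-entry of $b_2\circ L$ from Assumption~\ref{75.3.a} (namely $b_2'\circ L'\equiv\Delta\cdot\id_{B_1'}\bmod AB_1'$) gives $\bar{b_2}\circ\bar L'=0$ in $R$, placing $\im\bar L'\subseteq\ker\bar{b_2}$. Hypothesis~(a) kills $\bar L'(BB_1')$; hypothesis~(b), applied as $L'\circ b_2'\equiv\Delta\cdot\id_{B_2'}\bmod AB_2'$, kills $\bar L'(\im b_2')$; and the inclusions $\Delta B_1'\subseteq\im b_2'+AB_1'\subseteq\im b_2'+BB_1'$ (using $A\subseteq B$, together with $AB_1'\subseteq BB_1'$) show $(A,\Delta)B_1'\subseteq\im b_2'+BB_1'$. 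So $\bar L'$ descends to $\tilde L':B_1'/(\im b_2'+BB_1')\to B_2'\otimes R$, and hypothesis~(d) identifies the source with $B'/BB'$.

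The technical crux is producing a retraction $\rho:\ker\bar{b_2}\to B'/BB'$ with $\rho\circ\tilde L'=\id$. Given $z=(z',z'')\in\ker\bar{b_2}\subseteq(B_2'\oplus B_2'')\otimes R$, the block-upper-triangular form of $\bar{b_2}$ yields $\bar{b_2''}z''=0$ and $\bar{b_2'}z'=-\bar{b_2'''}z''$. Applying $\bar L'$ to the second identity and invoking hypothesis~(c), $I_1(L'\circ b_2''')\subseteq(A,\Delta)$, makes the contribution of $z''$ vanish in $R$; together with the matrix-factorization relations coming from hypothesis~(b) and Assumption~\ref{75.3.a}, this lets me recognize $z'$ modulo $(A,\Delta)B_2'$ as $\bar L'$ of a class that is well-defined in $B_1'\otimes R/(BB_1'+\im b_2') = B'/BB'$. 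The main obstacle, and the real work of the proof, is checking that this ``inverse'' of $\bar L'$ on $\ker\bar{b_2}$ is well-defined, independent of the choices involved, which requires the interplay among all four hypotheses~(a)--(d) with the block structure to neutralize the contributions from both the $B_2'$ and the $B_2''$ components simultaneously.
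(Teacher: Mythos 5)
There is a genuine gap at the very first step, and it propagates through the rest of the argument. You claim that $B_2\t_P R\xrightarrow{\dbar{b_2}}B_1\t_PR\xrightarrow{\dbar{b_1}}BS\to 0$ is exact, so that $\syz_2^R(BS)=\ker\dbar{b_2}$. But the kernel of $\dbar{b_1}$ consists of classes of elements $\beta_1\in B_1$ with $b_1(\beta_1)\in A$ (since $BS=B/A$), and such preimages of elements of $A$ need not lie in $\im b_2+(A,\Delta)B_1$; your argument via $\Delta B_1\subseteq \im b_2+AB_1$ only handles the relations coming from $\Delta$, not those coming from $A$ itself. A concrete counterexample within Setup~\ref{75.2} and Assumption~\ref{75.3.a}: take $P=\kk[x,y]$, $B=(x,y)$, $A=(x^2,y^2)$, $\Delta=xy$, $b_1=\bmatrix x& y\endbmatrix$, $b_2=\bmatrix -y\\ x\endbmatrix$, $L=\bmatrix -x& y\endbmatrix$. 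The element $(x,0)\in B_1$ maps to $x^2\in A$, hence lies in $\ker\dbar{b_1}$, but one checks easily that $(x,0)\notin \im b_2+(x^2,y^2,xy)B_1$; so your two-term complex is not exact in the middle and your identification of the second syzygy is wrong. This is exactly why the paper builds the mapping cone of a comparison map between free resolutions of $A$ and of $B$ (Notation~\ref{75.4}, Proposition~\ref{75.5}): the correct presentation of $BS$ over $R$ is $\dbar{B_2}\p\dbar{A_1}\xrightarrow{\dbar{\delta_1}=\bmatrix \dbar{b_2}&\dbar{c_1}\endbmatrix}\dbar{B_1}\to BS\to 0$, and $\syz_2^R(BS)=\ker\dbar{\delta_1}=\im\dbar{\delta_2}$ involves the extra blocks $c_2$, $a_2$, and $\phi$. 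Any splitting argument must contend with these extra generators of the syzygy module, which your proposal never sees.

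Beyond this, the retraction step is not actually carried out: you acknowledge that the ``real work'' is verifying that your inverse of $\dbar{L'}$ on $\ker\dbar{b_2}$ is well defined, but you do not do it, and on the (incorrect) module $\ker\dbar{b_2}$ it cannot be done as stated. The paper's route is different and worth internalizing: it first computes $\ker\dbar{L'}=\big(b_2'(B_2')+B\cdot B_1'\big)/(A:_PB)B_1'$ (Proposition~\ref{75.6}, using hypotheses (\ref{75.3.b}), (\ref{75.3.c}), (\ref{5.5.d}) and Assumption~\ref{75.3.a}), so that $\im\dbar{L'}\cong B'/BB'$, and then proves the direct-sum decomposition $\big(\im\dbar{\delta_{2,\ell}}+\im\dbar{L''}\big)\oplus\im\dbar{L'}=\syz_2^R(BS)$ by showing the intersection is zero; that last computation is where hypothesis (\ref{75.3.d}) and the choice $c_1(A_1)\subseteq BB_1$ enter, and it is precisely the part of the argument that your proposal leaves as an unverified claim.
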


\noindent Lemmas ~\ref{75.3-ci} and \ref{75.3} are proved in \ref{The proof of Lemma 75.3-ci} and \ref{The proof of Lemma 75.3}, respectively.
 We first identify $\syz_2^R(BS)$ in terms of homomorphisms than can be readily calculated from the given data; see (\ref{75.5.1}). This part of the argument is fairly routine and uses Assumption~\ref{75.3.a} but does not  use the extra hypotheses of either  Lemma~\ref{75.3-ci} or \ref{75.3}.

\begin{further notation}\label{75.4} Assume the notation and hypotheses of Setup~\ref{75.2} and Assumption~\ref{75.3.a}. 
Let
 \begin{equation}\label{75.4.1}\xymatrix{ 
\dots\ar[r]&A_2\ar[r]^{a_2}\ar[d]^{c_2} &A_1\ar[r]^{a_1}\ar[d]^{c_1}&A\ar[r]\ar
@{^(->}[d]&0\\
\dots\ar[r]&B_2\ar[r]^{b_2} &B_1\ar[r]^{b_1}&B\ar[r]&0}\end{equation} be a commutative diagram of $P$-modules with  exact rows and each $A_i$ and $B_i$ free. (Take the bottom row of (\ref{75.4.1}) to be  a Koszul complex on a regular sequence which generates $B$, whenever this is possible.) The hypothesis that $A\subseteq B^2$ guarantees that we may choose $c_1:A_1\to B_1$ with 
\begin{equation}\label{75.4.2} c_1(A_1)\subseteq B B_1.\end{equation} 
Let $\mu:\bigwedge^2 B_1\to B_2$ be a $P$-module homomorphism with $$(b_2\circ \mu)(\beta_1\w \beta_1')= b_1(\beta_1)\cdot \beta_1'-b_1(\beta_1')\cdot \beta_1.$$
 Define
$\phi:A_1\t B_1 \to B_2$ to be  the composition
$$A_1\t B_1 \xrightarrow{c_1\t 1} B_1\t B_1\xrightarrow{\text{natural quotient map}} \textstyle \bigwedge^2 B_1\xrightarrow{\mu}B_2.$$
Let $$ \begin{matrix}
{ B}_3\\\oplus\\ { A}_2\\\oplus \\{ A}_1 \t { B}_1\
\end{matrix}\xrightarrow{\delta_{2,\ell}} \begin{matrix}
{ B}_2\\\oplus\\ { A}_1\end{matrix}, \quad\quad B_1\xrightarrow{\delta_{2,r}} \begin{matrix}
{ B}_2\\\oplus\\ { A}_1\end{matrix}, 
\quad\quad\begin{matrix}
{ B}_3\\\oplus\\ { A}_2\\\oplus \\{ A}_1 \t { B}_1\\\oplus \\B_1
\end{matrix}\xrightarrow{\delta_{2}} \begin{matrix}
{ B}_2\\\oplus\\ { A}_1\end{matrix}, \quad\quad \text{and}\quad\quad  \begin{matrix}
{ B}_2\\\oplus\\ { A}_1\end{matrix}\xrightarrow{\delta_{1}} B_1$$
represent the $P$-module homomorphisms
$$\delta_{2,\ell}=\bmatrix { b}_3&{ c}_2&{ \phi}\\0&-{ a}_2&1\t { b}_1\endbmatrix,\ \ \delta_{2,r}=\bmatrix L\\0\endbmatrix,\ \  \delta_2=\bmatrix \delta_{2,\ell}
&\delta_{2,r}\endbmatrix, \ \ \text{and}\ \  \delta_1=\bmatrix b_2&c_1\endbmatrix
.$$
\end{further notation}

\begin{proposition}\label{75.5}If the notation and hypotheses of {\rm\ref{75.4}} are in effect, then the following statements  hold.
\begin{enumerate}[\rm(a)]
\item\label{75.5.a}The homomorphisms 
$$\begin{matrix}
{\bar B}_3\\\oplus\\ {\bar A}_2\\\oplus \\{\bar A}_1 \t {\bar B}_1\
\end{matrix} \xrightarrow{\ \ \bar{\delta}_{2,\ell}\ \ 
} \begin{matrix}
{\bar B}_2\\\oplus\\ {\bar A}_1\end{matrix} \xrightarrow{\ \ \bar{\delta}_1\ \ 
} 
{\bar B}_1 \xrightarrow{\ \ {\bar b}_1\ \ }BS \to 0 
$$ form an exact complex of $S$-modules.
\item\label{75.5.b}The homomorphisms 
$$\begin{matrix}
\bar{\bar B}_3\\\oplus\\ \bar{\bar A}_2\\\oplus \\\bar{\bar A}_1 \t \bar{\bar B}_1\\\oplus\\\bar{\bar B}_1
\end{matrix} \xrightarrow{\ \ \dbar{\delta_2}\ \ 
} \begin{matrix}
\bar{\bar B}_2\\\oplus\\ \bar{\bar A}_1\end{matrix} \xrightarrow{\ \ \dbar{\delta_1}\ \ } 
\bar{\bar B}_1 \xrightarrow{\ \ \bar{\bar b}_1\ \ }BS \to 0 
$$ form an exact complex of $R$-modules and 
\begin{equation}\label{75.5.1}\syz_2^R(BS)=\ker \dbar{\delta_1}= \im \dbar{\delta_2}.\end{equation}{\rm(}Recall from {\rm\ref{68.1}} that ``$\syz_2^R(BS)$'' is meaningful up to 
formation of direct sum with a $R$-projective module.{\rm)}
\end{enumerate}
\end{proposition}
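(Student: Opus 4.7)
The plan is to prove (a) by a standard horseshoe/mapping-cone argument applied to the $P$-free resolutions of $A$ and $B$ in (\ref{75.4.1}), and then to deduce (b) by invoking Assumption~\ref{75.3.a} to absorb the extra $\Delta$. Throughout I identify $BS$ with $B/A$, which is legitimate since $A\subseteq B^2\subseteq B$.

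For (a), surjectivity of $\bar b_1$ is immediate. Exactness at $\bar B_1$ reduces to showing $\{\beta_1\in B_1\mid b_1(\beta_1)\in A\}=b_2(B_2)+c_1(A_1)$, which is a one-line chase: write $b_1(\beta_1)=a_1(\alpha_1)$ so that $\beta_1-c_1(\alpha_1)\in\ker b_1=\im b_2$. For exactness at $\bar B_2\p\bar A_1$, I would first verify the complex property; using $b_2c_2=c_1a_2$ the $A_2$-column contributes nothing, while the defining property of $\mu$ gives $b_2\mu(c_1(\alpha_1)\w\beta_1)=a_1(\alpha_1)\beta_1-b_1(\beta_1)c_1(\alpha_1)$, so together with the $1\t b_1$ entry of $\delta_{2,\ell}$ the composition yields $a_1(\alpha_1)\beta_1\in AB_1$, as needed. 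For the converse, given $(\beta_2,\alpha_1)$ with $b_2(\beta_2)+c_1(\alpha_1)=\sum_i a_1(\alpha_{1,i})\beta_{1,i}$, subtract $\sum_i\phi(\alpha_{1,i}\t\beta_{1,i})$ from $\beta_2$ and $\sum_i b_1(\beta_{1,i})\alpha_{1,i}$ from $\alpha_1$; the resulting pair lies in the $P$-level kernel of $[b_2,c_1]\colon B_2\p A_1\to B_1$, and a second two-step diagram chase (using $\ker a_1=\im a_2$, $b_2c_2=c_1a_2$, and $\ker b_2=\im b_3$) shows that this kernel is exactly the image of the map $B_3\p A_2\to B_2\p A_1$ given by the first two columns of $\delta_{2,\ell}$.

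Part (b) builds on (a). First, $\Delta\in(A:_PB)$ gives $\Delta(B/A)=0$, so $BS$ is genuinely an $R$-module. The complex property at the new column reads $\delta_1\delta_{2,r}(\beta_1')=b_2L(\beta_1')\equiv\Delta\beta_1'\pmod{AB_1}$ by Assumption~\ref{75.3.a}, and $\Delta\beta_1'$ vanishes modulo $(A,\Delta)B_1$. For exactness at $\bar{\bar B}_2\p\bar{\bar A}_1$, given $(\beta_2,\alpha_1)$ with $b_2(\beta_2)+c_1(\alpha_1)=u+\Delta\beta_1'$ for some $u\in AB_1$, I replace $\beta_2$ by $\beta_2-L(\beta_1')$; Assumption~\ref{75.3.a} then cancels the $\Delta\beta_1'$ piece and returns us to the setting of~(a), which produces the required preimage in $B_3\p A_2\p(A_1\t B_1)\p B_1$ modulo $(A,\Delta)$. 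Once (b) is known to be exact, the identity (\ref{75.5.1}) is immediate from the definition of second syzygy recalled in \ref{68.1}.

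The work is bookkeeping rather than conceptual. The real content is that $\mu$ lifts the Leibniz identity $b_1(\beta)\beta'-b_1(\beta')\beta$ to $B_2$ and that $L$ provides a $B_2$-level homotopy for multiplication by $\Delta$ on $B_1$ modulo $A$; the main nuisance will be keeping the signs in $\delta_{2,\ell}$ consistent throughout the diagram chases and verifying at every step that the various ``$AB_1$-valued'' remainders are actually absorbed upon passage to $\bar{\phantom{x}}$ or $\dbar{\phantom{x}}$.
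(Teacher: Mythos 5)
Your proposal is correct and follows essentially the same route as the paper: both treat the mapping cone of the comparison map $c_\bullet$ as a $P$-free resolution of $BS$, tensor down to $S$ (resp.\ $R$), use $\phi$ (via $\mu$) to absorb the $AB_1$-valued discrepancies, and use Assumption~\ref{75.3.a} with the column $\delta_{2,r}=\bmatrix L\\0\endbmatrix$ to absorb the $\Delta$-terms, after which (\ref{75.5.1}) follows from \ref{68.1}. The only difference is cosmetic: you verify the mapping-cone exactness at the $P$-level by an explicit diagram chase, whereas the paper simply quotes it.
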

\begin{proof} In each case the indicated maps form a complex. We show that the complexes are exact. The mapping cone
\begin{equation}\label{75.5.2}\dots \xrightarrow{\bmatrix b_4&c_3\\0&-a_3\endbmatrix}
\begin{matrix} B_3\\\oplus\\ A_2\end{matrix}\xrightarrow{\bmatrix b_3&c_2\\0&-a_2\endbmatrix}
\begin{matrix} B_2\\\oplus\\ A_1\end{matrix}\xrightarrow{\bmatrix b_2&c_1\endbmatrix} B_1\xrightarrow{b_1} \frac BA=BS\to 0
\end{equation} of (\ref{75.4.1}) is a  resolution of $BS$ by free $P$-modules.

\medskip\noindent (\ref{75.5.a}) Apply $-\t_PS$ to (\ref{75.5.2}) to see that \begin{equation}\notag 
\begin{matrix} \sbar {B_2}\\\oplus\\ \sbar {A_1}\end{matrix}\xrightarrow{\sbar{\delta_1}=\bmatrix \sbar {b_2}&\sbar {c_1}\endbmatrix} \sbar {B_1}\xrightarrow{\sbar {b_1}} BS\to 0
\end{equation} is an exact sequence of $S$-modules. We compute $\ker (\sbar{\delta_1})$. Suppose $\theta\in B_2\oplus A_1$ and $\sbar {\delta_1}(\sbar{\theta})=0$ in $\sbar {B_1}$. In this case, $\delta_1(\theta)=\sum_ia_1(\alpha_{1,i})\cdot \beta_{1,i}$ for some $\alpha_{1,i}\in A_1$ and $\beta_{1,i}\in B_1$. Observe that
\begin{align*}(\delta_1\circ\delta_{2,\ell})\left(\bmatrix 0\\0\\\sum\limits_i \alpha_{1,i}\t \beta_{1,i}\endbmatrix\right)&=\bmatrix b_2&c_1\endbmatrix \bmatrix
\sum\limits_i \mu(c_1(\alpha_{1,i})\w \beta_{1,i})\\\sum\limits_i b_1(\beta_{1,i})\cdot \alpha_{1,i}\endbmatrix\\&= \sum_ia_1(\alpha_{1,i})\cdot \beta_{1,i}=
\delta_1(\theta).\end{align*} Thus,
$$\theta -\delta_{2,\ell}\left(\bmatrix 0\\0\\\sum\limits_i \alpha_{1,i}\t \beta_{1,i}\endbmatrix\right)\in \ker (\delta_1).$$
 We see from (\ref{75.5.2}) that 
$$\ker (\delta_1)=\im \left(\bmatrix b_3&c_2\\0&-a_2\endbmatrix\right).$$ It follows that $\sbar{\theta}$ is in the image of $\sbar{\delta_{2,\ell}}$. The completes the proof of (\ref{75.5.a}).

\medskip\noindent (\ref{75.5.b}) The proof of (\ref{75.5.b}) is essentially the same as the proof of (\ref{75.5.a}). The right exactness of tensor product yields the exact sequence
\begin{equation}\notag 
\begin{matrix} \dbar {B_2}\\\oplus\\ \dbar {A_1}\end{matrix}\xrightarrow{\dbar{\delta_1}=\bmatrix \dbar {b_2}&\dbar {c_1}\endbmatrix} \dbar {B_1}\xrightarrow{\dbar {b_1}} BS\to 0.
\end{equation} We compute $\ker (\dbar{\delta_1})$. Suppose $\theta\in B_2\oplus A_1$ and $\dbar {\delta_1}(\dbar{\theta})=0$ in $\dbar {B_1}$. In this case, $\delta_1(\theta)\equiv \Delta \beta_1\mod AB_1$ for some  $\beta_{1}\in B_1$. Apply Assumption~\ref{75.3.a} and observe that
$$(\delta_1\circ \delta_{2,r})\left(\beta_1 \right)
=(b_2\circ L)(\beta_1)\equiv \Delta \beta_1\equiv \delta_1(\theta)\mod AB_1.$$
Thus $\sbar{\theta}-\sbar{\delta_{2,r}}(\sbar{\beta_1})\in \ker(\sbar{\delta_{1}})=\im(\sbar{\delta_{2,\ell}})$, by part (\ref{75.5.a}); and therefore, $\dbar{\theta}$ is in the image of $\dbar{\delta_{2}}$. This completes the proof that the complex of (\ref{75.5.b}) is exact. 

The exact complex of (\ref{75.5.b}) gives rise to the exact sequence 
of $R$-modules
$$0\to \ker \dbar{\delta_1}\to \big(\dbar{B_2}\p \dbar{A_1}\big) \xrightarrow{\dbar{\delta_1}}\dbar{B_1} \xrightarrow{\dbar{b_1}}BS\to 0$$ with $\dbar{B_2}\p \dbar{A_1}$ and $\dbar{B_1}$ free. It follows from Definition~\ref{68.1} that $\ker(\dbar{\delta_1})=\syz_2^R(BS)$. The exactness of the complex of (\ref{75.5.b}) also yields that 
$\ker(\dbar{\delta_1})=\im(\dbar{\delta_2})$. Assertion (\ref{75.5.1}) has been established and the proof of Proposition~\ref{75.5} is complete. \end{proof}

\begin{proposition}\label{82.1} If the notation and hypotheses of Lemma~{\rm\ref{75.3-ci}}  and Notation~{\rm\ref{75.4}} are in effect, then the following statements  hold{\rm:}
 \begin{enumerate}[\rm(a)]
\item\label{82.1.a}$\ker\dbar{L}=\frac{BB_1}{(A:B)B_1}$, and
\item\label{82.1.b}$\im \dbar{L}\cong B/B^2$.\setcounter{nameOfYourChoice}{\value{enumi}}\end{enumerate}
\end{proposition}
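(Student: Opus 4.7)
The equality $(A:_P B) = (A,\Delta)$ of Setup~\ref{75.2}.(\ref{75.2.a}) means the claimed submodule $BB_1/(A:B)B_1$ of $\dbar{B_1} = B_1/(A,\Delta)B_1$ is exactly the image of $BB_1$ in $\dbar{B_1}$. Part (\ref{82.1.b}) will follow from part (\ref{82.1.a}) by a first--isomorphism--theorem argument using $b_1$, so the main task is (\ref{82.1.a}). Throughout I will exploit that Lemma~\ref{75.3-ci} lets us take the bottom row of (\ref{75.4.1}) to be the Koszul complex on a regular sequence generating $B$; in particular $\im b_2 \subseteq BB_1$.

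\textbf{Proof of (\ref{82.1.a}), inclusion $\supseteq$.} This is the easy direction and only uses the hypothesis $B\cdot I_1(L)\subseteq (A,\Delta)$. If $x\in BB_1$, its coordinates lie in $B$, and each coordinate of $L(x)$ is a $P$-linear combination of entries of $L$ multiplied by those coordinates, hence lies in $B\cdot I_1(L)\subseteq (A,\Delta)$. Therefore $L(BB_1)\subseteq (A,\Delta)B_2$, and the image of $BB_1$ in $\dbar{B_1}$ sits inside $\ker\dbar L$.

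\textbf{Proof of (\ref{82.1.a}), inclusion $\subseteq$.} This is the main step. Suppose $x\in B_1$ maps to an element of $\ker\dbar L$, i.e.\ $L(x)\in (A,\Delta)B_2$. Write $L(x) = a' + \Delta\, b$ with $a'\in AB_2$ and $b\in B_2$. Apply $b_2$ and use Assumption~\ref{75.3.a}, i.e.\ $b_2\circ L\equiv \Delta\cdot\id_{B_1}\mod AB_1$, to deduce
\[
\Delta\, x \;\equiv\; b_2(L(x)) \;=\; b_2(a') + \Delta\, b_2(b) \;\equiv\; \Delta\, b_2(b) \pmod{AB_1}.
\]
Thus $\Delta\bigl(x-b_2(b)\bigr)\in AB_1$. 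Reading this one coordinate at a time and applying Setup~\ref{75.2}.(\ref{75.2.b}), which says $(A:_P\Delta) = B$, I conclude $x - b_2(b)\in BB_1$. Because $B$ is generated by a regular sequence and the bottom row of (\ref{75.4.1}) is the corresponding Koszul complex, $b_2(b)\in \im b_2\subseteq BB_1$, so $x\in BB_1$. This shows $\ker\dbar L\subseteq BB_1/(A,\Delta)B_1 = BB_1/(A:B)B_1$, completing (\ref{82.1.a}).

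\textbf{Proof of (\ref{82.1.b}).} By (\ref{82.1.a}) and the first isomorphism theorem,
\[
\im\dbar L \;\cong\; \dbar{B_1}/\ker\dbar L \;\cong\; B_1/\bigl(BB_1 + (A,\Delta)B_1\bigr).
\]
The surjection $b_1:B_1\twoheadrightarrow B$ sends $BB_1+(A,\Delta)B_1$ onto $B^2 + AB+\Delta B$, which equals $B^2$ since $A\subseteq B$ gives $AB\subseteq B^2$ and Setup~\ref{75.2}.(\ref{75.2.b}) gives $\Delta B\subseteq A\subseteq B^2$. Hence $b_1$ induces a surjection onto $B/B^2$; its kernel is $(\ker b_1 + BB_1 + (A,\Delta)B_1)/\bigl(BB_1+(A,\Delta)B_1\bigr)$, which vanishes because $\ker b_1 = \im b_2\subseteq BB_1$ from the Koszul form of the bottom row. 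Thus $\im\dbar L\cong B/B^2$.

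\textbf{Main obstacle.} The only delicate point is the inclusion $\subseteq$ in (\ref{82.1.a}): one has to pass from $L(x)\in (A,\Delta)B_2$ to information about $x$ itself, and for this one needs simultaneously the ``section'' relation $b_2\circ L\equiv \Delta\cdot\id\pmod{AB_1}$, the colon identity $(A:_P\Delta)=B$, and the Koszul containment $\im b_2\subseteq BB_1$. Together, these three ingredients compress a potentially technical diagram chase into a two-line argument.
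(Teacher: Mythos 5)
Your proof is correct and follows essentially the same route as the paper: the key set identity $\{\beta_1\in B_1\mid L(\beta_1)\in(A,\Delta)B_2\}=BB_1$, proved with the same three ingredients ($b_2\circ L\equiv\Delta\cdot\id \bmod AB_1$, $(A:_P\Delta)=B$, and $\im b_2\subseteq BB_1$ from the Koszul complex), followed by the same routine passage to $\im\dbar{L}\cong B/B^2$ via $b_1$. Your part (\ref{82.1.b}) merely spells out the kernel computation that the paper leaves as "routine."
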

\begin{proof} We prove (\ref{82.1.a}) by showing that
\begin{equation}\label{LOOK ME UP}\{\beta_1\in B_1\mid L(\beta_1)\in (A,\Delta)B_2\}=BB_1.\end{equation}
The inclusion ``$\supseteq$'' is guaranteed by the hypothesis $B\cdot I_1(L)\subseteq (A,\Delta)$. We prove ``$\subseteq$''. Let $\beta_1$ be an element of $B_1$ with $L(\beta_1)\in (A,\Delta)B_2$. It follows that there is an element $\beta_2\in B_2$ with $L\beta_1\equiv \Delta \beta_2\mod AB_2$. Apply $b_2$ to both sides and then use Assumption~\ref{75.3.a} to obtain 
$$\Delta \beta_1\equiv (b_2\circ L)(\beta_1)\equiv \Delta b_2(\beta_2)\mod AB_1.$$ It follows that $\Delta (\beta_1-b_2(\beta_2)) \in AB_1$ and
$$ \beta_1-b_2(\beta_2)\in (A:_P\Delta)B_1=BB_1.$$ The hypothesis that $B$ is generated by a regular sequence ensures that 
the bottom row of (\ref{75.4.1}) is a Koszul complex; and therefore,
$b_2(\beta_2)\in BB_1$. We conclude that $\beta_1\in BB_1$; hence, (\ref{82.1.a}) is established. The proof of (\ref{82.1.b}) follows in a routine manner:
$$\im \dbar{L}\cong \frac {\dbar {B}_1}{\ker \dbar{L}}\cong \frac {B_1}{BB_1}\cong \frac{B}{B^2}.$$ The middle isomorphism is due to (\ref{82.1.a}) and the final isomorphism is induced by the surjection $b_1:B_1\to B$.
\end{proof}

\begin{proposition}\label{75.6} If the notation and hypotheses of Lemma~{\rm\ref{75.3}}  and Notation~{\rm\ref{75.4}} are in effect, then the following statements  hold{\rm:} 
 \begin{enumerate}[\rm(a)]
\item\label{75.6.a}$\ker\dbar{L'}=\frac{b_2'(B_2')+B\cdot B_1'}{(A:_PB)B_1'}$, and
\item\label{75.6.b}$\im \dbar{L'}\cong B'/BB'$.\setcounter{nameOfYourChoice}{\value{enumi}}\end{enumerate}
\end{proposition}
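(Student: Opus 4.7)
The plan is to follow the pattern of the proof of Proposition~\ref{82.1}, adapted to the block decomposition of Lemma~\ref{75.3}. The key step for (\ref{75.6.a}) is to establish the set-level identity
\begin{equation*}\{\beta_1'\in B_1'\mid L'(\beta_1')\in (A,\Delta)B_2'\}=b_2'(B_2')+B\cdot B_1',\end{equation*}
from which (\ref{75.6.a}) follows by passing to the quotient modulo $(A:_PB)B_1'=(A,\Delta)B_1'$, using Setup~\ref{75.2}.(\ref{75.2.a}).

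For the inclusion ``$\supseteq$'', hypothesis~(\ref{75.3.c}) of Lemma~\ref{75.3} directly gives $L'(b_2'(\beta_2'))\equiv\Delta\beta_2'\mod AB_2'$, which lies in $(A,\Delta)B_2'$, while hypothesis~(\ref{75.3.b}) yields $L'(B B_1')\subseteq B\cdot I_1(L')\cdot B_2'\subseteq (A,\Delta)B_2'$.

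The reverse inclusion is the main obstacle. For this I would first extract from Assumption~\ref{75.3.a} the companion identity
\begin{equation*}b_2'\circ L'\equiv \Delta\cdot\id_{B_1'}\mod AB_1',\end{equation*}
obtained by reading off the top-left block of the matrix equation $b_2\circ L\equiv\Delta\id_{B_1}\mod AB_1$ under the given block decompositions of $b_2$ and $L$ (the zero in the lower-left block of $b_2$ and the block-diagonal form of $L$ are crucial here, since they guarantee that no $B_2''$-contribution infects the $(1,1)$-entry). If $L'(\beta_1')=\Delta\beta_2'+a$ for some $\beta_2'\in B_2'$ and $a\in AB_2'$, then applying $b_2'$ and invoking the companion identity gives $\Delta\beta_1'\equiv\Delta b_2'(\beta_2')\mod AB_1'$; hence $\beta_1'-b_2'(\beta_2')\in (A:_P\Delta)B_1'=B B_1'$ by Setup~\ref{75.2}.(\ref{75.2.b}), so $\beta_1'\in b_2'(B_2')+B B_1'$.

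For (\ref{75.6.b}), combining (\ref{75.6.a}) with the first isomorphism theorem gives
\begin{equation*}\im\dbar{L'}\cong \dbar{B_1'}/\ker\dbar{L'}\cong B_1'/\big(b_2'(B_2')+B B_1'\big).\end{equation*}
The exactness hypothesis~(\ref{5.5.d}) of Lemma~\ref{75.3} identifies $B_1'/b_2'(B_2')$ with $B'$ via $b_1'$, under which the image of $B B_1'$ is $b_1'(B B_1')=B\cdot b_1'(B_1')=B B'$. Therefore $\im\dbar{L'}\cong B'/B B'$, as claimed.
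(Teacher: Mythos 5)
Your proposal is correct and follows essentially the same route as the paper: prove the set-level identity $\{\beta_1'\in B_1'\mid L'(\beta_1')\in(A,\Delta)B_2'\}=b_2'(B_2')+B\cdot B_1'$ (using (\ref{75.3.c}) and (\ref{75.3.b}) for ``$\supseteq$'' and the block-diagonal reading of Assumption~\ref{75.3.a} together with $(A:_P\Delta)=B$ for ``$\subseteq$''), pass to the quotient by $(A:_PB)B_1'=(A,\Delta)B_1'$, and deduce (\ref{75.6.b}) from the first isomorphism theorem plus Hypothesis~(\ref{5.5.d}). The only cosmetic deviations are that you bypass (\ref{75.3.c}) in the reverse inclusion by applying the companion identity $b_2'\circ L'\equiv\Delta\cdot\id_{B_1'}\bmod AB_1'$ directly, and you absorb the paper's separate containment $(A:_PB)B_1'\subseteq b_2'(B_2')+B\cdot B_1'$ into the quotient-passing step, where it is automatic from $P$-linearity of $L'$.
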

\begin{proof}
The proof makes use of the following statements:
\begin{enumerate}[\rm(a)]\setcounter{enumi}{\value{nameOfYourChoice}}
\item\label{75.6.c} $\{\beta_1'\in B_1'\mid L'(\beta_1')\in(A,\Delta)B_2'\}=
b_2'(B_2')+B\cdot B_1'$,  
and 
\item \label{75.6.d} $(A:_PB)B_1'\subseteq b_2'(B_2')+B\cdot B_1'$.
\end{enumerate}

\medskip We  prove (\ref{75.6.c}). Hypotheses (\ref{75.3.c})  and (\ref{75.3.b}) 
of Lemma~\ref{75.3} ensure that 
$$(A,\Delta)B_2'\supseteq L'\big(b_2'(B_2')+B\cdot B_1'\big);$$ and this establishes the inclusion ``$\supseteq$''.  Now, we prove ``$\subseteq$''. Suppose $\beta'_1$ is in $B_1'$ 
with
$$L'(\beta_1')\equiv \Delta \beta_2'\mod AB_2',$$ for some $\beta_2'\in B_2'$. Use Hypothesis \ref{75.3}.(\ref{75.3.c}) to write
$$L'(\beta_1')\equiv (L'\circ b_2') (\beta_2')\mod AB_2'.$$
Apply $b_2'$ to obtain
$$(b_2'\circ L')(\beta_1' - b_2' \beta_2')\equiv 0\mod AB_1'.$$
Employ Assumption~\ref{75.3.a} to see that
$$\Delta(\beta_1' - b_2' \beta_2')\in AB_1';$$
hence, $$\beta_1' - b_2' \beta_2'\in (A:_P\Delta)B_1'=BB_1',$$
by \ref{75.2}.(\ref{75.2.b}), and $\beta_1'\in b_2'(B_2')+ BB_1'$. This completes the proof of 
(\ref{75.6.c}).

\medskip  We prove (\ref{75.6.d}). Recall from \ref{75.2} that $(A:_PB)=(A,\Delta)$ and $A\subseteq B$. Thus, it suffices to consider $\Delta B_1'$. Assumption~\ref{75.3.a} and the decompositions of Lemma~\ref{75.3}  ensure that 
$$\Delta\cdot B_1'\equiv (b_2\circ L)(B_1')=(b_2'\circ L')(B_1')\subseteq b_2'(B_2').$$

\medskip The proof of (\ref{75.6.a}) is now straightforward:
\begin{align*} \ker\dbar{L'}&{}=\frac{\{\beta_1'\in B_1'\mid L'(\beta_1')\in(A,\Delta)B_2'\}+(A:_PB)B_1'}{(A:_PB)B_1'}\\
&{}=\frac{b_2'(B_2')+B\cdot B_1'+(A:_PB)B_1'}{(A:_PB)B_1'},&&\text{by (\ref{75.6.c}),}\\
&{}=\frac{b_2'(B_2')+B\cdot B_1'}{(A:_PB)B_1'},&&\text{by (\ref{75.6.d}).}\end{align*}
The proof of (\ref{75.6.b}) is also straightforward:
$$
\im \dbar{L'}
\cong 
\frac{\dbar{B_1'}}{\ker \dbar{L'}}=\frac {\frac{B_1'}{(A:_PB)B_1'}}{\frac{b_2'(B_2')+B\cdot B_1'}{(A:_PB)B_1'}}
=\frac{B_1'}{b_2'(B_2')+B\cdot B_1'}
\\
\cong 
\frac {B'}{BB'}.$$
The final isomorphism is induced by the surjection $\xymatrix{b_1':B_1'\ar@{->>}[r]& B'}$ and uses Hypothesis~(\ref{5.5.d}) of Lemma~\ref{75.3}
\end{proof}

\begin{chunk}\label{The proof of Lemma 75.3-ci} {\bf The proof of Lemma \ref{75.3-ci}.} We use all of
notation and hypotheses of {\rm\ref{75.2}, \ref{75.3-ci},} and {\rm\ref{75.4}}.
We proved in Proposition~\ref{82.1}.(\ref{82.1.b}) that 
$$\im(\dbar{\delta_{2,r}})\cong B/B^2.$$ 
We complete the proof of Lemma~\ref{75.3-ci} by showing that
$\im(\dbar{\delta_{2,r}})$ is a direct summand of the $R$-module $\syz_2^R(BS)$.
We accomplish this goal by showing that $$\im(\dbar{\delta_{2,\ell}})\oplus \im(\dbar{\delta_{2,r}})=\syz_2^R(BS).$$ We know from (\ref{75.5.1}) that $$\im(\dbar{\delta_{2,\ell}})+ \im(\dbar{\delta_{2,r}})=\syz_2^R(BS).$$ We show that $\im(\dbar{\delta_{2,\ell}})\cap \im(\dbar{\delta_{2,r}})=(0)$. Suppose $$\beta_1\in B_1\quad \text{and}\quad \theta\in B_3\oplus A_2\oplus (A_1\t B_1)$$ with \begin{equation}\label{68.7.1}\delta_{2,r}(\beta_1)-\delta_{2,\ell}(\theta)\quad\text{in}\quad (A,\Delta)B_2\oplus (A,\Delta)A_1.\end{equation} We complete the proof by showing 
that
\begin{equation}\label{68.7.2} L(\beta_1)\in (A,\Delta)B_2.\end{equation}
We are told in (\ref{68.7.1}) that 
$$\bmatrix L(\beta_1)\\0\endbmatrix -\delta_{2,\ell}(\theta)=\bmatrix \Delta\beta_2\\ \Delta \alpha_1 \endbmatrix+\bmatrix \text{an element of $AB_2$}\\\text{an element of $AA_1$} \endbmatrix,$$for some $\beta_2\in B_2$ and $\alpha_1\in A_1$.  
Apply $\delta_1=\bmatrix b_2&c_1\endbmatrix$ to see that 
\begin{equation}\label{68.7.3}b_2L \beta_1-\delta_1\delta_{2,\ell}(\theta)=\Delta b_2 \beta_2+\Delta c_1 \alpha_1+\text{an element of $AB_1$}.\end{equation}We know from \ref{75.5}.(\ref{75.5.a}) that $\delta_1\delta_{2,\ell}(\theta)\in AB_1$. 
The choice $c_1(A)\subseteq BB_1$, which was made in (\ref{75.4.2}),
guarantees that  
$\Delta c_1 \alpha_1\in \Delta BB_1\subseteq AB_1$. Of course, Assumption~\ref{75.3.a} guarantees that $b_2L \beta_1\equiv\Delta \beta_1\mod AB_1$. Thus, (\ref{68.7.3}) yields
$$\Delta(\beta_1-b_2\beta_2)\in AB_1;$$and therefore, $\beta_1-b_2\beta_2\in BB_1$. Thus, $$\beta_1\in b_2(B_2)+BB_1\subseteq BB_1.$$ (The hypothesis that $B$ is generated by a regular sequence ensures that 
the bottom row of (\ref{75.4.1}) is a Koszul complex; and therefore,
$b_2(B_2)\subseteq  BB_1$ and the final inclusion holds.)  Apply the hypothesis $BI_1(L)\subseteq(A,\Delta)$   to conclude $L(\beta_1)\in (A,\Delta)B_2$, which establishes (\ref{68.7.2}) and completes the proof.
\qed\end{chunk}

\begin{chunk}\label{The proof of Lemma 75.3} {\bf The proof of Lemma~\ref{75.3}.} 
We use all of
notation and hypotheses of {\rm\ref{75.2}, \ref{75.3},} and {\rm\ref{75.4}}.
We proved in Proposition~\ref{75.6}.(\ref{75.6.b}) that 
$$\im(\dbar{L'})\cong B'/BB'.$$ 
We complete the proof of Lemma~\ref{75.3} by showing that
$\im(\dbar{L'})$ is a direct summand of the $R$-module $\syz_2^R(BS)$.
We accomplish this goal by showing that $$\Big(\im(\dbar{\delta_{2,\ell}})+ \im \dbar{L''}\Big)\oplus \im(\dbar{L'})=\syz_2^R(BS).$$ We know from (\ref{75.5.1}) that $$\Big(\im(\dbar{\delta_{2,\ell}})+ \im \dbar{L''}\Big)+ \im(\dbar{L'})=\syz_2^R(BS).$$ 
We show that $\Big(\im(\dbar{\delta_{2,\ell}})+ \im \dbar{L''}\Big)\cap \im(\dbar{L'})=(0)$. Suppose $$\beta_1'\in B_1',\quad \beta_1''\in B_1'',\quad \text{and}\quad \theta\in B_3\oplus A_2\oplus (A_1\t B_1)$$ 
with \begin{equation}\label{75.7.1}L'(\beta_1')-L''(\beta_1'')-\delta_{2,\ell}(\theta)\quad\text{in}\quad (A,\Delta)B_2'\oplus (A,\Delta)B_2''\oplus (A,\Delta)A_1.\end{equation} We complete the proof by showing 
that
\begin{equation}\label{75.7.2} L'(\beta_1')\in (A,\Delta)B_2'.\end{equation}
We are told in (\ref{75.7.1}) that 
$$\bmatrix L(\beta_1'-\beta_1'')\\0\endbmatrix -\delta_{2,\ell}(\theta)=\bmatrix \Delta\beta_2\\ \Delta \alpha_1 \endbmatrix+\bmatrix \text{an element of $AB_2$}\\
\text{an element of $AA_1$} \endbmatrix,$$for some $\beta_2\in B_2$,   and $\alpha_1\in A_1$.  
Apply $\delta_1=\bmatrix 
b_2
&c_1\endbmatrix$ to see that 
\begin{equation}\label{75.7.3}(b_2\circ L) (\beta_1'-\beta_1'')
-\delta_1\delta_{2,\ell}(\theta)=\Delta b_2 \beta_2+\Delta c_1 \alpha_1+\text{an element of $AB_1$}.\end{equation}We know from \ref{75.5}.(\ref{75.5.a}) that $\delta_1\delta_{2,\ell}(\theta)\in AB_1$. 
The choice $c_1(A)\subseteq BB_1$, which was made in (\ref{75.4.2}),
guarantees that  
$\Delta c_1 \alpha_1\in \Delta BB_1\subseteq AB_1$. Of course, Assumption~\ref{75.3.a} guarantees that $b_2L \equiv\Delta \mod AB_1$. Thus, (\ref{75.7.3}) yields
$$\Delta(\beta_1'-\beta_1''-b_2\beta_2)\in AB_1;$$and therefore, \begin{equation}\label{newnew}\beta_1'-\beta_1''-b_2\beta_2\in (A:_P\Delta)B_1=BB_1.\end{equation}
Separate (\ref{newnew}) into the components $B_1'\oplus B_1''=B_1$. Recall that 
$b_2$ has the form $$b_2=\bmatrix b_2'&b_2'''\\0&b_2''\endbmatrix;$$ and write $\beta_2=\beta_2'+\beta_2''$ with $\beta_2'\in B_2'$ and $\beta_2''\in B_2''$. Conclude that
$$\bmatrix \beta_1'\\-\beta_1''\endbmatrix -\bmatrix b_2'&b_2'''\\0&b_2''\endbmatrix
\bmatrix \beta_2'\\\beta_2''\endbmatrix= \bmatrix \text{an element of $BB_1'$}\\
\text{an element of $BB_1''$} \endbmatrix.$$ 
It follows that 
$$\beta'_1-b_2'\beta_2'-b_2'''\beta_2'''\in BB_1'.$$
Apply $L'$ and invoke Hypotheses (\ref{75.3.b}), (\ref{75.3.c}), and (\ref{75.3.d}) of Lemma~\ref{75.3} to conclude $L'\beta_1'$ is in $(A,\Delta)B_2'$, 
 which establishes (\ref{75.7.2}) and completes the proof.
\qed\end{chunk}

\section{Complete intersections and five quadratics in three variables.} \label{65}

In this section, we prove Theorems 
\ref{0.2A} and \ref{0.2B} which were promised  in Section~\ref{Intro}.
The proof of Theorem~\ref{0.2A} is based on 
Lemma~\ref{75.3-ci} and is given in {\ref{prove T1}}. 
The  
proof of Theorem~\ref{0.2B}   is given in \ref{proof-of-65.1} after 
 we collect the results that  ensure that the hypotheses of Lemma~\ref{75.3} are satisfied.

\begin{chunk}\label{prove T1} {\bf The proof of Theorem~\ref{0.2A}.}
 We apply Lemma~\ref{75.3-ci} and Remark~\ref{2.4.1}. 
Gorenstein duality guarantees that $(A:_PB)/A$ is a cyclic $P/A$-module. 
Let   $\Delta$ be an element of $P$ with 
$(A:_PB)=(A,\Delta).$ 
Gorenstein duality also guarantees that
$(A:_P\Delta)=B$. 

Let  $\theta_1,\dots, \theta_n$ be a regular sequence in $P$ which generates $B$ and 
let 
$$B_2\xrightarrow{b_2}B_1\xrightarrow{b_1}B\to 0$$ be 
 the beginning of the Koszul complex on 
this generating set.  
We are now in the situation of  Setup~\ref{75.2}.  It remains to  establish the existence of a homomorphism $L:B_1\to B_2$ which satisfies 
\begin{enumerate}[\rm(a)]
\item\label{W.28}$(b_2\circ L) \equiv \Delta\cdot \id_{B_1} \mod AB_1$, and
\item\label{W.29} $B\cdot L(B_1)\subseteq (A,\Delta)B_2$.
\end{enumerate}

Property (\ref{W.29}) is satisfied provided
$$L(B_1)\subseteq \Big((A,\Delta):B)\Big) B_2=\Big((A:B):B)\Big) B_2
=(A:B^2) B_2.$$Consequently, it suffices to prove that 
$$(A:B)B_1\subseteq b_2\Big((A:B^2)B_2\Big)+AB_1.$$

We think of $B_1$ as $P^n$ and of the surjection $B_1\to B$ as the map given by the matrix $[\theta_1,\ \dots,\ \theta_n]$. We show that 
$$\bmatrix (A:B)\\0\\\vdots\\0\endbmatrix\subseteq P^{n}$$ is in $ b_2\Big((A:B^2)B_2\Big)+AB_1$. One completes the argument by symmetry.
Every Koszul relation on $\theta_1,\dots,\theta_n$ is in the image of $b_2$; so, in particular,
each column of 
$$\bmatrix \theta_2& \cdots&  \theta_n\\\hline &-\theta_1I_{n-1}&\endbmatrix$$
 is in the image of $b_2$, where $I_{n-1}$ is the $(n-1)\times (n-1)$ identity matrix.
Let $\Theta$ be an element of $(A:B)$. We show that there exist elements $r_2,\dots, r_n$ in $(A:\theta_1)\cap (A:B^2)$ with $\sum_{i=2}^n r_i \theta_i\equiv \Theta$, mod $A$. In other words, it suffices to show that 
$$\Theta\in \Big((A:\theta_1)\cap (A:B^2)\Big)  (\theta_2,\dots, \theta_n)+A.$$ Let $\Theta$ roam over  $(A:B)$. It suffices to show that 
$$(A:B)\subseteq (A:(\theta_1,B^2))  (\theta_2,\dots, \theta_n)+A.$$
By Gorenstein duality, it suffices to show
\begin{equation}\label{W.30}(\theta_1,B^2):(\theta_2,\dots, \theta_n) \subseteq B.\end{equation} The hypothesis that $B$ is a complete intersection of grade at least two ensures that  (\ref{W.30}) holds and the proof is complete. \qed \end{chunk}

 We turn our attention to the proof  of Theorem~\ref{0.2B}. Some preliminary results are needed. 

\begin{observation}\label{2.2} Let $\kk$ be a field of arbitrary characteristic,   $P'$ be a standard-graded polynomial ring in three variables over $\kk$, and  $B'$ be an ideal in $P'$ which is generated by a $5$-dimensional subspace of $[P']_2$. Assume that  \begin{equation}\label{NDH}
[\socle(P'/B')]_1=0.
\end{equation} Then the ring $P'/B'$ is Gorenstein and has Hilbert series $\HS(P'/B')=1+5t+t^2${\rm;}
furthermore, the ideals $(B')^2$ and $([P']_1)^4$ of $P$ are equal.
\end{observation}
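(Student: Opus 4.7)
My plan is to first determine the Hilbert function of $R := P'/B'$, then derive the Gorenstein property from the socle hypothesis, and finally deduce the ideal identity using the structural consequences.

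Since $B'$ has no generators in degrees $\le 1$, $\dim[R]_0 = 1$ and $\dim[R]_1 = \dim[P']_1$; and $\dim[R]_2 = \dim[P']_2 - \dim[B']_2 = 1$. Pick $\bar q$ generating $[R]_2$. Multiplication in $R$ then packages into the symmetric bilinear form $\beta$ on $[R]_1$ defined by $\bar\ell\bar\ell' = \beta(\bar\ell,\bar\ell')\bar q$, together with the linear map $\mu_q \colon [R]_1 \to [R]_3$, $\bar\ell\mapsto\bar\ell\bar q$, whose image is all of $[R]_3$. Associativity gives the identity $\beta(\bar\ell_1,\bar\ell_2)\mu_q(\bar\ell_3) = \beta(\bar\ell_2,\bar\ell_3)\mu_q(\bar\ell_1)$, and the socle hypothesis translates to $\operatorname{rad}(\beta)\cap \ker\mu_q = 0$. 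Noting that $\beta\not\equiv 0$ (since $[B']_2\subsetneq [P']_2$), a short case analysis on $\mu_q$ shows: if $\mu_q\not\equiv 0$ then picking $\bar n_0$ with $\mu_q(\bar n_0)\ne 0$ and $\bar m_0$ with $\beta(\bar n_0,\bar m_0)\ne 0$ (such $\bar m_0$ must exist, else $\bar n_0\in\operatorname{rad}(\beta)$ and the identity collapses to $\beta\equiv 0$), the identity forces $\dim\operatorname{im}(\mu_q)\le 1$ and $\ker\mu_q = \operatorname{rad}(\beta)$, so $\dim\operatorname{rad}(\beta)\ge 2$ — contradicting the socle hypothesis. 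Hence $\mu_q\equiv 0$, so $[R]_3 = 0$ and therefore $[R]_{\ge 3} = 0$; moreover $\operatorname{rad}(\beta) = [\socle R]_1 = 0$, so $\beta$ is non-degenerate. This establishes the stated Hilbert function, and since $[R]_2$ is one-dimensional and killed by $[R]_1$, the socle equals $[R]_2$ and $R$ is Gorenstein.

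The inclusion $(B')^2\subseteq ([P']_1)^4$ is immediate. For the reverse, in each degree $d\ge 5$ one has $[(B')^2]_d\supseteq [B']_2[P']_1[P']_{d-3} = [B']_3[P']_{d-3} = [P']_d$ using $[B']_3 = [P']_3$ (which follows from $[R]_3 = 0$). The essential degree is $d = 4$, where the desired equality $[B']_2\cdot[B']_2 = [P']_4$ is between two spaces of the same dimension $15$, so it suffices to show surjectivity of the multiplication $\Sym^2[B']_2\to [P']_4$. I would deduce this from the Buchsbaum--Eisenbud structure theorem: $R$ is codimension-$3$ Gorenstein with five quadratic generators and socle degree $2$, so its minimal graded $P'$-resolution has the self-dual form $0\to P'(-5)\to P'(-3)^5\to P'(-2)^5\to P'\to R\to 0$ with $F_i^\ast\cong F_{3-i}(5)$. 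Combining this self-duality with the Matlis-duality identity $\Tor_i^{P'}(R,R)^{\vee}\cong \Ext^i_{P'}(R,R^{\vee})$ for Artinian modules and the Gorenstein isomorphism $R^{\vee}\cong R(2)$ yields $\Tor_i^{P'}(R,R)^{\vee}\cong \Tor_{3-i}^{P'}(R,R)(7)$. Matching Hilbert series at $i = 1$ (where $\Tor_1^{P'}(R,R) = B'/(B')^2$) forces the degree-$4$ component of $B'/(B')^2$ to vanish, which together with $[B']_4 = [P']_4$ (from $[R]_{\ge 3}=0$) is exactly $[(B')^2]_4 = [P']_4$.

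The main technical obstacle is the last step — establishing $[B']_2\cdot[B']_2 = [P']_4$. The duality route above is compact but invokes nontrivial machinery. An alternative, elementary but characteristic-sensitive approach is to extend scalars to $\overline{\kk}$, put $\beta$ into a normal form (diagonal if $\operatorname{char}\kk\neq 2$, containing an isotropic pair otherwise), and verify the spanning statement by explicit monomial calculation on the $15$ products of the resulting generators of $[B']_2$.
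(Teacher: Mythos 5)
Your first half is correct and follows a genuinely different route from the paper. The paper chooses coordinates: it diagonalizes the symmetric form attached to $[B']_2$ (with a separate characteristic-two argument), writes the explicit generators $B'=(xy,xz,yz,u_2x^2-u_1y^2,u_3x^2-u_1z^2)$ of (\ref{B}), exhibits the alternating matrix (\ref{b_2}) whose Pfaffians generate $B'$, and reads Gorensteinness, the resolution (\ref{6.2.6}), and the Hilbert series from the Buchsbaum--Eisenbud complex. Your intrinsic argument with the multiplication form $\beta$ and the map $\mu_q$ is correct as far as it goes, works uniformly in all characteristics, and needs no normal form; the trade-off is that the paper's explicit normal form and the matrix $b_2'$ are reused later (in Lemma~\ref{2.3}, to construct $\Delta$ and $L'$), so the coordinate computation there does double duty. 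One incidental point: the series $1+5t+t^2$ in the statement is a typo for $1+3t+t^2$ (indeed $\dim_{\kk}[P']_1=3$); what you actually prove is the correct series, and the paper's own proof carries the same misprint.

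The one genuine gap is the degree-four step. Your duality gives $\Tor_1^{P'}(R,R)^\vee\cong\Tor_2^{P'}(R,R)(7)$, hence $\dim_{\kk}[B'/(B')^2]_4=\dim_{\kk}[\Tor_2^{P'}(R,R)]_3$; since the pairing relates $\Tor_1$ to $\Tor_2$ rather than to itself, ``matching Hilbert series at $i=1$'' produces only this one equation and cannot by itself force the left-hand side to vanish (summing lengths over all degrees returns the same identity). You must separately show $[\Tor_2^{P'}(R,R)]_3=0$. This is true and quick: a degree-three cycle in $F_2\otimes_{P'}R=R(-3)^5$ is a constant vector $v\in\kk^5$ whose image under the linear-entry map $F_2\to F_1$ has entries in $[B']_1=0$, so $v$ is a degree-three element of $\ker(F_2\to F_1)$ over $P'$ itself; exactness of the minimal resolution places $v$ in the image of $F_3=P'(-5)$, which vanishes in degree three, so $v=0$. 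With that one observation your duality route closes (and your $d\ge 5$ chain is anyway superfluous once degree four is done, since $[(B')^2]_d\supseteq[P']_{d-4}[(B')^2]_4$). By contrast, the paper handles this point by checking $([P']_1)^4\subseteq(B')^2$ directly from its normal form, or by quoting the resolution of $P'/(B')^2$ from the reference [KU]; your fallback of an explicit monomial check after putting $\beta$ in normal form is essentially that computation, and it does go through in characteristic two without modification.
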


\begin{proof} Let $\phi:[P']_2\to \kk$ be a $\kk$-module homomorphism with $[B']_2=\ker \phi$. If the characteristic of $\kk$ is different than two, then use standard results about symmetric bilinear forms over a field (essentially Gram-Schmidt orthogonalization) to choose a basis $x,y,z$ for $[P']_1$ so that the matrix
\begin{equation}\label{matrix T}T=\bmatrix \phi(x^2)&\phi(xy)&\phi(xz)\\\phi(xy)&\phi(y^2)&\phi(yz)\\\phi(xz)&\phi(yz)&\phi(z^2)\endbmatrix \end{equation} is a diagonal matrix. The non-degeneracy hypothesis (\ref{NDH}) ensures that the entries on the main diagonal of $T$ are units in $\kk$, say $u_1,u_2,u_3$.
The same conclusion holds in characteristic two; but the argument is more subtle. The argument we offer was taken from some expository notes written by Keith Conrad \cite[Exercise~5.4]{C}.

Assume $\kk$ has characteristic two. The non-degeneracy hypothesis (\ref{NDH}) ensures that there is an element $z_1\in [P']_1$ with ${z_1}^2\notin B'$. (Otherwise, if $x_1$, $y_1$, $z_1$ is any basis of  $[P']_1$, then $\phi(y_1z_1) x_1+\phi(x_1z_1)y_1+\phi(x_1y_1)z_1$ represents a non-zero element of $
[\socle(P'/B')]_1$.) If there is an element $y_1$ with $y_1z_1\in B'$ and $y_1^2\notin B'$, then the usual argument maybe used to complete the proof. On the other hand, if every element of $B':_{[P']_1}z_1$ squares to an element of $B'$, then let $x_1,y_1$ be a basis of $B':_{[P']_1}z_1$ with $\phi(x_1y_1)=1$. Consider the basis
$$x=\phi(z_1^2)x_1+z_1, \quad  y=(1+\phi(z_1^2))x_1+y_1+z_1,\quad \text{and}\quad z=x_1+y_1+z_1$$ for $[P']_1$.
Observe that the matrix (\ref{matrix T}) is a diagonal matrix. In this case also,
we take the entries on the main diagonal of $T$ to be the  units  $u_1,u_2,u_3$ of  $\kk$.

It is now clear that \begin{equation}\label{B}
B'=(xy,xz,yz,u_2x^2-u_1y^2,u_3x^2-u_1z^2).\end{equation} It is also clear that the row vector of signed maximal order Pfaffians  of the alternating matrix 
\begin{equation}\label{b_2}b_2'=\bmatrix
 0   &  u_3x   & -u_1u_3y& -z & 0    \\
 -u_3x & 0     & u_1u_3z  &0  & y    \\
 u_1u_3y& -u_1u_3z& 0      &u_3x& -u_2x \\
 z    & 0     & -u_3x   &0  & 0    \\
 0    & -y   &  u_2x    &0 &  0    \endbmatrix\end{equation}
is \begin{equation}\label{2.2.3}b_1'=[u_3xy,u_2xz,yz,u_2u_3x^2-u_1u_3y^2,u_3^2x^2-u_1u_3z^2].\end{equation} Observe that the image of $b_1'$ is equal to $B'$ and that $B'$ has grade $3$. It follows that
\begin{equation}\label{6.2.6}0\to P'(-5)\xrightarrow{{b_1'}^{\rm T}} P'(-3)^5 \xrightarrow{b_2'}P'(-2)^5 \xrightarrow{b_1'}
P'\end{equation} is a minimal homogeneous resolution of $P'/B'$; hence, $P'/B'$ is a 
 Gorenstein ring 
with Hilbert series $1+5t+t^2$. One can check the assertion ${B'}^2=([P']_1)^4$ by hand or one can read the minimal homogeneous resolution
$$0\to P'(-6)^{10}\to P'(-5)^{24}\to P'(-4)^{15}\to P'$$
of $P'/{B'}^2$ from \cite[page 36]{KU} in order to conclude that the socle degree of $P'/{B'}^2$ is three; thus, $([P']_1)^4\subseteq {B'}^2$. The inclusion ${B'}^2\subseteq ([P']_1)^4$ is obvious.
\end{proof}

\begin{lemma}\label{2.3}
 Let $\kk$ be a field of arbitrary characteristic,   $P=\kk[X_1,X_2,X_3,Y_1,\dots,Y_s]$ be a standard-graded polynomial ring over $\kk$  in $3+s$  variables for some nonnegative integer $s$,  $B'$ be an ideal in $P$ which is generated by five linearly independent quadratic forms in the variables $X_1,X_2,X_3$,
$B''$ be the ideal $(Y_1,\dots,Y_s)$ of $P$,  $B$ be the ideal $B'+B''$ of $P$,
and  $A\subseteq ([P]_1)^5$ be a homogeneous ideal of $P$ with $P/A$ an Artinian Gorenstein local ring.  
Assume    $[\socle(P/B)]_1=0$.
Then there is a presentation
\begin{equation}\label{pres}B_2\xrightarrow{b_2}B_1\xrightarrow{b_1}B\to 0\end{equation}
 of $B$ by 
free $P$-modules,
a 
$P$-module homomorphism $L: B_1\to B_2$, and direct sum decompositions
$$B_1=B_1'\p B_1'',\ \  B_2=B_2'\p B_2'',\ \  
b_1=\bmatrix b_1'&b_1''\endbmatrix,\ \   b_2=\bmatrix b_2'&b_2'''\\0&b_2'',\endbmatrix,\ \  \text{and}\ \  L=\bmatrix L'&0\\0&L''\endbmatrix,$$
which satisfy Assumption~{\rm\ref{75.3.a}} and conditions {\rm(\ref{75.3.b})}, {\rm(\ref{75.3.c})}, {\rm(\ref{75.3.d})}, and {\rm(\ref{5.5.d})}
of Lemma~{\rm\ref{75.3}}. \end{lemma}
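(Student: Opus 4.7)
The plan is to verify that Setup~\ref{75.2} is in force for the ring $R = P/(A:_PB)$, to build an explicit presentation of $B$ by combining the Buchsbaum--Eisenbud Pfaffian resolution of $B'$ from Observation~\ref{2.2} with Koszul-type data for the $Y$-variables and the mixed relations, and finally to construct the homomorphism $L$ with the required factorization and symmetry properties.

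First, I would note that since $B'' = (Y_1,\dots,Y_s)$, the quotient $P/B$ equals $P'/B'$, so Observation~\ref{2.2} gives that $P/B$ is Gorenstein with Hilbert series $1+5t+t^2$ and that $(B')^2 = ([P']_1)^4$. This latter identity, combined with $A \subseteq ([P]_1)^5$, upgrades to $A \subseteq B^2$: every monomial of total degree $5$ lies in $B^2$ (with at least two $Y$-factors it sits in $(B'')^2$; with exactly one $Y$-factor it sits in $B''\cdot ([P']_1)^4 = B''\cdot (B')^2$; pure in the $X$'s it sits in $[P]_1\cdot (B')^2$). Since $S = P/A$ is Artinian Gorenstein and $S/BS = P/(A+B) = P/B$ is Gorenstein, Gorenstein duality (see~\ref{2C}) applied to $S$ shows that $(A:_PB)/A$ is a cyclic $S$-module, furnishing an element $\Delta$, and dualizing once more yields $(A:_P\Delta) = B$. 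Thus Setup~\ref{75.2} is in force.

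Next, I would take $B_1' = P(-2)^5$ with $b_1'$ the row of Pfaffians from~(\ref{2.2.3}), $B_1'' = P(-1)^s$ with $b_1'' = [Y_1,\dots,Y_s]$, $B_2' = P(-3)^5$ with $b_2'$ the alternating matrix from~(\ref{b_2}), and $B_2''$ the direct sum of the Koszul module $\bigwedge^2 P(-1)^s$ (capturing the Koszul relations on the $Y_j$) and an additional $5s$ copies of $P$ encoding the mixed relations $q_i Y_j - Y_j q_i = 0$. With these choices, $b_2'''$ collects the $B_1'$-components of the mixed relations (the blocks $-Y_j I_5$), while $b_2''$ collects the $B_1''$-components of the Koszul-$Y$ relations and the mixed relations. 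Condition~(\ref{5.5.d}) is immediate from the exactness of the Pfaffian resolution~(\ref{6.2.6}).

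The main obstacle will be constructing $L' : B_1' \to B_2'$ satisfying both $b_2' L' \equiv \Delta\cdot \id_{B_1'} \mod AB_1'$ (the $B_1'$-part of Assumption~\ref{75.3.a}) and $L' \circ b_2' \equiv \Delta\cdot \id_{B_2'} \mod AB_2'$ (condition~(\ref{75.3.c})), together with $B \cdot I_1(L') \subseteq (A,\Delta)$ (condition~(\ref{75.3.b})). My intended tool is the self-duality of the Buchsbaum--Eisenbud Pfaffian resolution of $B'$: the alternating matrix $b_2'$ is (up to twist) its own negative transpose, and the canonical module of $P/B'$ is $P/B'$ itself. Since $\Delta \cdot B' \subseteq A$, multiplication by $\Delta$ on $B'$ is null-homotopic modulo $A$ along this Pfaffian resolution, and the self-dual structure lets me choose the first-order component of the homotopy as a single $5 \times 5$ matrix $L'$ that simultaneously satisfies the left and right matrix-factorization relations modulo $A$. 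The entries of $L'$ obtained in this way lie in $((A,\Delta):_P B)$, which delivers~(\ref{75.3.b}); condition~(\ref{75.3.d}) on $L' \circ b_2'''$ then reduces to~(\ref{75.3.b}) because the entries of $b_2'''$ are $\pm Y_j \in B$. Finally, I would construct $L'' : B_1'' \to B_2''$ and verify the remaining components of Assumption~\ref{75.3.a} ($b_2'' L'' \equiv \Delta\cdot \id_{B_1''}$ and $b_2''' L'' \equiv 0$ mod $A$) by a direct Koszul-style null-homotopy using $\Delta Y_j \in A$ and $\Delta q_i \in A$.
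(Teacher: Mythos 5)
Your setup and your choice of presentation coincide with the paper's: the same reduction to Observation~\ref{2.2}, the Pfaffian resolution (\ref{6.2.6}) extended to $P$, the Koszul complex on $Y_1,\dots,Y_s$, and the decomposition with $b_2'''=\bmatrix -1\otimes b_1''&0\endbmatrix$, which gives (\ref{5.5.d}) at once; your verification that $A\subseteq B^2$ is a welcome explicit check of Setup~\ref{75.2}. The gap is in the construction of $L'$, which is the actual content of the lemma. Your key sentence -- ``since $\Delta\cdot B'\subseteq A$, multiplication by $\Delta$ on $B'$ is null-homotopic modulo $A$ along this Pfaffian resolution'' -- is unjustified: after applying $-\otimes_P(P/A)$ the Pfaffian resolution is merely a complex of free $S$-modules, not acyclic (its higher homology is $\Tor_+^P(P/B',S)$), so the comparison theorem does not produce a homotopy, and the fact that $\bar\Delta$ annihilates $B'S$ does not make $\bar\Delta\cdot\id$ null-homotopic on a non-acyclic complex. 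Moreover, even granting a homotopy $\{h_i\}$, its identities read $\sbar{b_2'}h_1+h_0\sbar{b_1'}=\bar\Delta\cdot\id$ and $\sbar{b_3'}h_2+h_1\sbar{b_2'}=\bar\Delta\cdot\id$; Assumption~\ref{75.3.a} and condition (\ref{75.3.c}) require the \emph{clean} two-sided identities $b_2'L'\equiv\Delta\cdot\id$ and $L'b_2'\equiv\Delta\cdot\id$ mod $A$ with one and the same matrix, and the self-duality of (\ref{6.2.6}) only converts a one-sided identity for $L'$ into a one-sided identity for $-{L'}^{\rm T}$ (since $b_2'$ is alternating); the difference $L'+{L'}^{\rm T}$ has entries in $B$, and multiplying it by $b_2'$ does not land in $A$, so no symmetrization argument is available. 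Likewise, that the entries of $L'$ lie in $((A,\Delta):_PB)$ is not a formal consequence of any homotopy construction; in the paper it is a degree and dual-basis computation made possible because the entries are the specific classes $\alpha_m$ of cubic monomials for the Gorenstein pairing on $S$, with $\Delta=u_1\alpha_{x^2}+u_2\alpha_{y^2}+u_3\alpha_{z^2}$ and $L'$ the explicit matrix of Table~\ref{table}, verified by direct multiplication against (\ref{b_2}).

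A second, smaller gap is the construction of $L''$. It is not enough to know $\Delta Y_j\in A$ and $\Delta q_i\in A$: with $b_2''=\bmatrix b_1'\otimes 1& 1\otimes b_1''\endbmatrix$ and $L''(\beta_1'')=c\otimes\beta_1''$, you need an element $c\in B_1'$ with $b_1'(c)\equiv\Delta\bmod A$ \emph{and} $B''\cdot I_1(c)\subseteq A$, otherwise the mixed component $b_2'''L''(\beta_1'')=-b_1''(\beta_1'')\cdot c$ does not vanish modulo $A$. This is exactly Claim~\ref{Apr-10}, $\Delta\in B'\cdot(A:_PB'')+A$, which the paper proves by a separate Gorenstein-duality and degree argument using $A\subseteq([P]_1)^5$ and the Hilbert series of $P'/B'$; your Koszul-style homotopy does not supply it. In short, the skeleton of your argument matches the paper, but the two nontrivial existence statements (the two-sided factorization matrix $L'$ and the special representation of $\Delta$ needed for $L''$) are asserted rather than proved.
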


\begin{proof} Let $P'$ be the polynomial ring $\kk[X_1,X_2,X_3]$. View $P'$ as a subring of $P$. Notice that $P/B=P'/(P'\cap B')$; so we may apply Observation~\ref{2.2} and pick a basis $x,y,z$ of $[P']_1$ so that the generators of $(P'\cap B')$ are given in (\ref{B}) and the minimal homogeneous resolution of $P'/(P'\cap B')$ by free $P'$-modules is given in (\ref{6.2.6}) with $b_1'$ and $b_2'$ given in (\ref{2.2.3}) and (\ref{b_2}), respectively. 

Apply the functor $P\t_{P'}-$ to (\ref{6.2.6}) to obtain the minimal homogeneous resolution 
$$\mathbb B':\quad 0\to B_3'=P(-5)\xrightarrow{{b_1'}^{\rm T}}B_2'=P(-3)^5 \xrightarrow{b_2'}
B_1'=P(-2)^5\xrightarrow{b_1'} B_0'=P$$ of $P/B'$. (This establishes Hypothesis~(\ref{5.5.d}) of 
Lemma~\ref{75.3}.) Let $B_1''$ be a free $P$-module of rank $s$,  $b_1'':B_1''\to P$ be the homomorphism $$P(-1)^s\xrightarrow{\bmatrix Y_1&\cdots&Y_s\endbmatrix} P,$$ and $\bigwedge^{\bullet}B_1''$ be the Koszul complex associated to $b_1''$. The minimal homogeneous resolution of $P/B$ is $\mathbb B'\t_P \bigwedge^{\bullet}B_1''$. In this language, the presentation (\ref{pres}) of $B$ by free $P$-modules is 
$$\begin{matrix} B_2'\t_P \bigwedge^0B_1''\\\p\\B_1'\t \bigwedge^1B_1''
\\\p\\B_0'\t \bigwedge^2B_1''\end{matrix} \xrightarrow{\ \ b_2\ \ } 
\begin{matrix} B_1'\t_P \bigwedge^0B_1''\\\p\\B_0'\t \bigwedge^1B_1''
\end{matrix}\xrightarrow{\ \ b_1=\bmatrix b_1'&b_1''\endbmatrix\ \ } B=B'+B''\to 0,$$where 
$$b_2=\left[\begin{array}{c|cc} b_2'&-1\t b_1''&0\\\hline0&b_1'\t 1&1\t b_1''\end{array}\right].$$
To complete the promised decomposition of  (\ref{pres}) let
\begin{align}\label{promised}B_2''= \begin{matrix} B_1'\t \bigwedge^1B_1''
\\\p\\B_0'\t \bigwedge^2B_1'',\end{matrix}\qquad 
&b_2'''=\bmatrix -1\t b_1''&0\endbmatrix\quad \text{
and},\\\notag  &b_2''=\bmatrix b_1'\t 1&1\t b_1''\endbmatrix.\end{align}

The homogeneous ideals $A$ and $B$ satisfy $A\subseteq B$ and both ideals define Gorenstein rings. Gorenstein duality ensures the existence of a homogeneous element $\Delta$ of $P$ with
\begin{equation}\label{2.3.0}(A:_PB)=(A,\Delta) \quad\text{and}\quad (A:_P\Delta)=B.\end{equation}
Let $S$ denote $P/A$, and $\bar{\phantom{x}}$ denote $-\t_PS$.
 Let $\binom{x,y,z,Y_1,\dots,Y_s}{i}$ denote the set of monomials in $P$ of degree $i$ in $x,y,z,Y_1,\dots,Y_s$. The hypothesis $A\subseteq ([P]_1)^5$  
guarantees that $$\textstyle\{\sbar{m}\mid m\in \binom{x,y,z,Y_1,\dots,Y_s}{i}\}$$ is a basis for $[S]_i$ for $0\le i\le 4$. 
Let $\s$ denote the socle degree of $S$ and let $\alpha_1$ be an element of $[P]_\s$ with the property  that $\sbar{\alpha_1}$ is a basis for the socle $[S]_\s$ of $S$. For each integer $i$, with $0\le i\le \s$, the  multiplication map \begin{equation}\label{pp}[S]_i\times [S]_{\s-i}\to [S]_\s\end{equation} is a perfect pairing. For $1\le i\le 3$, select $\{\alpha_m\in [P]_{\s-i}\mid m\in \binom{x,y,z,Y_1,\dots,Y_s}{i}\}$
so that 
  $$\textstyle \{\sbar{\alpha_m}\mid m\in \binom{x,y,z,Y_1,\dots,Y_s}{i}\}$$ is a basis for $[S]_{\s-i}$ which is dual to $\{\sbar{m}\mid m\in \binom{x,y,z,Y_1,\dots,Y_s}{i}\}$ in the sense that
if $m,m'\in \binom{x,y,z,Y_1,\dots,Y_s}{i}$, then 
$$\sbar {m'}\, \sbar{\alpha_{m}}=\begin{cases}\sbar{\alpha_1},&\text{if $m=m'$, and }\\0,&\text{if $m\neq m'$}.\end{cases}$$ It follows from (\ref{pp}) that if 
$m\in\binom{x,y,z,Y_1,\dots,Y_s}{i}$ and $m'\in\binom{x,y,z,Y_1,\dots,Y_s}{i'}$ for some $i$ and $i'$ with $0\le i,i'\le 3$, then
$$\sbar {m'}\, \sbar{\alpha_{m}}=
\begin{cases} \sbar{\alpha_{m''}},&\text{if $m'm''=m$, and }\\0,&\text{if $m'$ does not divide $m$.}\end{cases}$$

The generators of $B$ are $Y_1,\dots,Y_s$, together with the five quadratics  given in (\ref{B}). Use the perfect pairing (\ref{pp}), with $i=2$, to observe that 
$$[A:_PB]_{\s-2}=[A]_{\s-2}+\kk(u_1\alpha_{x^2}+u_2\alpha_{y^2}+u_3\alpha_{z^2});$$
hence, we may take the $\Delta$ of (\ref{2.3.0}) to be  \begin{equation}\label{2.3.3}\Delta=u_1\alpha_{x^2}+u_2
\alpha_{y^2}+u_3\alpha_{z^2}.\end{equation}

\begin{table}
\begin{center}
Let $L'$ be the matrix
$$\left[\begin{array}{c|c|c|c|c}
0&-\frac{u_1}{u_3}\alpha_{x^3}&0&u_2\alpha_{y^2z}&0\\
&&&+u_3\alpha_{z^3}&\\
\hline
0&0&0&-\frac{u_1u_2}{u_3}\alpha_{x^2y}&-u_1\alpha_{x^2y}\\
&&&&-u_2\alpha_{y^3}\\&&&&-u_3\alpha_{yz^2}\\
\hline
     -\frac{u_2}{u_1u_3}\alpha_{y^3}&\frac1{u_1}\alpha_{z^3}&0&-\frac{u_1}{u_3}\alpha_{x^3}&0\\\hline
     -u_1\alpha_{x^2z}&\frac{u_2^2}{u_3}\alpha_{y^3}&\frac{u_2}{u_3}\alpha_{xy^2}&-u_1u_2\alpha_{xyz}&-u_1u_3\alpha_{xyz}\\
-u_3\alpha_{z^3}&&&&\\
\hline
     -\frac{u_1u_3}{u_2}\alpha_{x^2z}&u_2\alpha_{y^3}&-\frac{u_1}{u_2}\alpha_{x^3}&0&0\\
-\frac{u_3^2}{u_2}\alpha_{z^3}&&-\frac{u_3}{u_2}\alpha_{xz^2}&&
\end{array}  \right]. $$
\caption{{ The matrix $L'$ from the proof of Lemma~\ref{2.3}.}}\label{table}
\end{center}\end{table}
Let $L'$ be the $5\times 5$ matrix of Table~\ref{table} with entries in $P$. Recall the matrix $b_2'$ from (\ref{b_2}).
A straightforward calculation shows that $b_2'  L'$ and $L'b_2'$ are both congruent to $\Delta  I_5$, modulo $A$. The conclusion 
$b_2'  L'\equiv \Delta  I$ establishes half of Assumption~\ref{75.3.a}. (The other half is studied at (\ref{apr26}).) The conclusion $L'b_2'\equiv \Delta  I$ demonstrates that   condition  Lemma~\ref{75.3}.(\ref{75.3.c})  holds. 
Observe that
\begin{equation}\label{B''}I_1(b_2''')\cdot S\subseteq B''\cdot S\subseteq \ann_S I_1(L');\end{equation}
and therefore condition   Lemma~\ref{75.3}.(\ref{75.3.d})  holds. (The second inclusion is easily read from the definition of $L'$.)
To prove condition  Lemma~\ref{75.3}.(\ref{75.3.b}), 
it suffices to show that $B^2\cdot I_1(L')\cdot S=0$. On the other hand, 
\begin{align*}B^2\cdot I_1(L')\cdot S&{}\subseteq 
({B'}^2+B'')I_1(L')\cdot S\\&{}= {B'}^2I_1(L')\cdot S&&\text{see (\ref{B''})}\\
&{}\subseteq 
[S]_4\cdot [S]_{\s-3}\cdot S\subseteq [S]_{\s+1}\cdot S=0.\end{align*}

\begin{chunk-no-advance}\label{apr26} To complete the proof, we must establish the rest of Assumption~\ref{75.3.a}; that is, we must  exhibit a $P$-module homomorphism $L'':B_1''\to B_2''$ with \begin{equation}\label{final goal}b_2\bmatrix 0\\L''\endbmatrix\equiv\bmatrix 0\\\Delta \cdot \id_{B_1''}\endbmatrix \mod A B_1.\end{equation}
The next calculation is the main step in that direction. \end{chunk-no-advance}
\begin{claim-no-advance}\label{Apr-10} The element $\Delta$ of $P$ is  in the ideal $B'\cdot(A:_PB'')+A$.\end{claim-no-advance}

\medskip\noindent{\bf Proof of Claim~\ref{Apr-10}.} 
By Gorenstein duality, \ref{2C}, it suffices to show that
$$A:_P(B'\cdot(A:_PB'')+A)\subseteq A:_P(A,\Delta). $$
The module on the right is $B$. The module on the left is
$$(A:_P(A:_PB'')):_PB'=(A,B''):_PB'.$$ To prove Claim~\ref{Apr-10} it suffices to show that 
\begin{equation}\label{STS}(A,B''):_PB' \subseteq B.\end{equation}
Let $p$  be a homogeneous element  in $(A,B''):_PB'$. Write $p=p'+p''$ for homogeneous elements $p'$ and $p''$, with $p'$ in the subring $\kk[x,y,z]$ of $P$ and $p''$ in the ideal $B''$ of $P$. Of course, $p''$ is in $(A,B''):_PB'$ and $p''\in B$; consequently, $p'$ is in $(A,B''):_PB'$
and it suffices to prove that $p'\in B$. We are told that 
$p'B'\subseteq (A,B'')$. There are non-zero elements of
$$[p'B'\cap \kk[x,y,z]]_{\deg p'+2}.$$ On the other hand, the ideal $A$ is contained in $([P]_1)^5$, by hypothesis, so $$[(A,B'')\cap \kk[x,y,z]]_i=0\quad\text{for $i\le 4$.}$$Therefore, $5\le \deg p'+2$; hence $3\le \deg p'$ and $p'\in B'\subseteq B$; and this completes the proof of Claim~\ref{Apr-10}. (Recall that the Hilbert series of $P'/B'$ is given in Observation~\ref{2.2}.)

Claim~\ref{Apr-10} guarantees that there is an element $c$ in $B_1'$ with
\begin{equation}\label{from-Apr-10}b_1'(c)\equiv \Delta \mod A \quad \text{and}\quad B''I_1(c)\subseteq A.\end{equation}
Recall from (\ref{promised}) that $B_2''=(B_1'\t B_1'')\p \bigwedge^2 B_1''$. 
Define $L'':B_1''\to B_2''$ by 
$$L''(\beta_1'')=c\t \beta_1''\in B_1'\t B_1''\subseteq B_2'', \qquad\text{for $\beta_1''\in B_1''$}.$$ Observe that
$$b_2\bmatrix 0\\L''\endbmatrix(\beta_1'')=\bmatrix b_2'&-1\t b_1''&0\\0&b_1'\t 1&1\t b_1''\endbmatrix\bmatrix 0\\c\t \beta_1''\\0\endbmatrix 
=\bmatrix -b_1''(\beta_1'')\cdot c\\b_1'(c)\cdot \beta_1''\endbmatrix \equiv \bmatrix 0\\\Delta\cdot \beta_1''\endbmatrix, $$
mod $A$. The final congruence is due to (\ref{from-Apr-10}). This completes the proof of (\ref{final goal}) and also the proof of Lemma~\ref{2.3}. \end{proof}

\begin{remark} We used Macaulay2, \cite{GS}, to solve a system of 250 non-homogeneous linear equations in 300 unknowns in order to produce the matrix $L'$ of Table~\ref{table}. The matrix $L'$ is $5\times 5$; so, it has $25$ entries, and each entry is a linear combination of the ten elements $\{\alpha_m\mid m\in \binom{x,y,z}3\}$. Altogether, $25\times 10$ coefficients must be determined. Each entry of $b_2'L'$ and each entry of $L'b_2'$ is a linear combination of the six elements $\{\alpha_m\mid m\in \binom{x,y,z}2\}$. Altogether $(25+25)\times 6$ equations must be solved. It is amusing to notice that, according to the computer, the vector space of solutions of the corresponding homogeneous equations has dimension $86$, over $\mathbb Q$ and also over $\mathbb Z/(2)$; so, there are many choices for $L'$. The fact that we used the computer to identify one choice for $L'$ is essentially irrelevant because it is easy to check by hand that the matrix of Table~\ref{table} does satisfy $\sbar{b_2'L'}=\sbar{L'b_2'}=\sbar{\Delta} I$. 
\end{remark}

\begin{chunk}\label{proof-of-65.1}{\bf The proof of Theorem~\ref{0.2B}.} 
 Apply Lemma~\ref{2.3} to see that all of the hypotheses of Lemma~\ref{75.3} hold. Lemma~\ref{75.3} yields the result. 
\qed
\end{chunk}

\section{Test modules}\label{TM}
In Theorem~\ref{v2-test-module} and Proposition~\ref{7.2} we prove that many modules are proj-test modules in the sense of Definition~\ref{proj-test}. The modules ``$B'/BB'$'' of Theorem~\ref{0.2B} are covered by Theorem~\ref{v2-test-module} and Remark~\ref{plus one}. The modules ``$B/B^2$'' of Theorem~\ref{0.2A} are covered in  Proposition~\ref{7.2} and Remark~\ref{plus two}. We state and prove the main result in the paper as Theorem~\ref{Main Result}.

\begin{theorem}\label{v2-test-module} Let $(R,\m,\kk)$ be a local Noetherian ring with $3\le v(R)$.
If 
 $T$ 
is a finitely generated  $R$-module with $\m^2 T = 0$, then $T$ is a proj-test module
 for $R$, in the sense of Definition~{\rm \ref{proj-test}}. 
\end{theorem}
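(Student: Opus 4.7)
The plan is to exploit the short exact sequence
\[ 0 \to \m T \to T \to T/\m T \to 0, \]
whose outer terms are both $\kk$-vector spaces since $\m^2 T = 0$. Write $n = \dim_\kk(T/\m T)$, $m = \dim_\kk(\m T)$, and $\beta_i = \dim_\kk \Tor_i^R(\kk, N)$ for the Betti numbers of $N$. Assuming $\Tor_i^R(T, N) = 0$ for all $i \ge 1$, the Tor long exact sequence yields connecting isomorphisms
\[ \delta_j \colon \Tor_j^R(\kk, N)^n \xrightarrow{\ \sim\ } \Tor_{j-1}^R(\kk, N)^m \quad (j \ge 2), \]
plus an injection $\Tor_1^R(\kk, N)^n \hookrightarrow (\m T) \otimes_R N$. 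In particular, the recurrence $n\beta_j = m\beta_{j-1}$ holds for $j \ge 2$.

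Next, I would make $\delta_j$ explicit on a minimal free resolution $F_\bullet \to N$ whose differentials $M_j$ have entries $a_{rs} \in \m$. Writing $\bar{M}_j = (\bar{a}_{rs})$ for the reduction of $M_j$ modulo $\m^2$ and $\mu \colon (\m/\m^2) \otimes_\kk (T/\m T) \to \m T$ for the multiplication map, a routine zig-zag identifies $\delta_j$ as
\[ (T/\m T)^{\beta_j} \to (\m T)^{\beta_{j-1}}, \qquad (\bar v_s)_s \mapsto \Bigl(\sum_s \mu(\bar a_{rs} \otimes \bar v_s)\Bigr)_r, \]
using the fact that $a_{rs} v_s$ depends only on $\bar a_{rs}$ and $\bar v_s$ because $\m \cdot \m T \subseteq \m^2 T = 0$. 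Thus the demand that $\delta_j$ be an isomorphism is a strong non-degeneracy condition on $\bar{M}_j$ relative to $\mu$.

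Now the hypothesis $v(R) \ge 3$ enters precisely through the identification $\Sym^2_\kk(\m/\m^2) \xrightarrow{\sim} \m^2/\m^3$, which is equivalent to $v(R) \ge 3$. Because $M_{j+1} M_j = 0$ in $R$, its entries vanish in $\m^2/\m^3$, so $\bar{M}_{j+1}\bar{M}_j = 0$ as a matrix with entries in $\Sym^2(\m/\m^2)$; equivalently, $(P'^{\beta_\bullet}, \bar{M}_\bullet)$ is a linear complex over the polynomial ring $P' := \Sym_\kk(\m/\m^2)$. Combining this symmetric composition-vanishing with two consecutive iso conditions on $\delta_j$ and $\delta_{j+1}$ gives a dimension count: the columns of $\bar{M}_{j+1}$ allowed by $\bar{M}_j\bar{M}_{j+1} = 0 \in \Sym^2$ must simultaneously span enough of $(\m/\m^2)^{\beta_j}$ (via $\mu$) to realize the iso $\delta_{j+1}$; one checks this is possible only if $\bar{M}_j = 0$, which violates the iso of $\delta_j$ unless $\beta_j = 0$. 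The recurrence $n\beta_j = m\beta_{j-1}$ then propagates $\beta_i = 0$ for all $i \ge 1$, so $N$ is free. The case $T = R/\m^2$ is illustrative: $\mu$ becomes the identity on $\m/\m^2$, the iso condition forces the columns of $\bar{M}_{j+1}$ to be a basis of $(\m/\m^2)^{\beta_j}$, and then $\bar{M}_j\bar{M}_{j+1} = 0 \in \Sym^2$ combined with non-degeneracy of multiplication in $P'$ forces $\bar{M}_j = 0$ outright.

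The main obstacle is making this dimension count precise for arbitrary $T$, since $\mu$ varies. The crucial input is the $\Sym^2(\m/\m^2) \cong \m^2/\m^3$ identification supplied by $v(R) \ge 3$: without it the relation $\bar{M}_{j+1}\bar{M}_j = 0$ in $\m^2/\m^3$ could be realized by matrices in far too many ways, and no contradiction would emerge.
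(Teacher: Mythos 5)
Your reductions are correct as far as they go: the long exact sequence of $0\to \m T\to T\to T/\m T\to 0$ does give the isomorphisms $\delta_j$ for $j\ge 2$; your explicit description of $\delta_j$ in terms of $\bar{M}_j$ and the multiplication map $\mu$ is right (minimality makes the differentials of $\kk\otimes F_\bullet$ and of $(\m T)\otimes F_\bullet$ vanish, and $\m\cdot\m T=0$ makes the connecting map depend only on $d_j$ modulo $\m^2$); and $3\le v(R)$ does give $\Sym^2_{\kk}(\m/\m^2)\cong \m^2/\m^3$, hence $\bar{M}_j\bar{M}_{j+1}=0$ over $\Sym_{\kk}(\m/\m^2)$. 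But the heart of the argument --- ``one checks this is possible only if $\bar{M}_j=0$'' --- is exactly what is not checked, and your own last paragraph concedes it. The deduction does work when $\mu$ is injective (your $T=R/\m^2$ illustration) and also when $T$ is cyclic, because in those cases surjectivity of $\delta_{j+1}$ really does force the columns of $\bar{M}_{j+1}$ (respectively, their reductions modulo $\ker\mu$) to span a full space of tuples of linear forms over $\kk$, after which linearity and $\bar{M}_j\bar{M}_{j+1}=0$ kill $\bar{M}_j$ entrywise. For a general $T$ with $\m^2T=0$, however, $\mu$ can have a large kernel: surjectivity of $\delta_{j+1}$ constrains the columns of $\bar{M}_{j+1}$ only ``through $\mu$'' and yields no spanning statement inside $(\m/\m^2\otimes T/\m T)^{\beta_j}$, so the symmetric relation $\bar{M}_j\bar{M}_{j+1}=0$ and the isomorphism conditions do not visibly interact. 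No dimension count is supplied at this point, and it is not clear that any count using only two consecutive differentials can close the argument.

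This is reflected in the paper's proof, which is structured precisely to avoid analyzing $\mu$ and to use arbitrarily long strings of differentials rather than two. Writing $d_i=\sum_j D_{i,j}x_j$, the hypothesis $3\le v(R)$ yields $D_{i,j}D_{i+1,j}\equiv 0$ and $D_{i,j_1}D_{i+1,j_2}+D_{i,j_2}D_{i+1,j_1}\equiv 0$ modulo $\m$; an element of $(\m T)^{b_i}$ is then rewritten through the images of $T\otimes d_{i+1}, T\otimes d_{i+2},\dots$ more than $n$ times (this uses $\Tor$-vanishing at every stage, not just at two consecutive spots), and a pigeonhole on the variable indices $j_0,\dots,j_k$ together with the displayed relations shows that every coefficient matrix in the resulting expression has entries in $\m^2$, hence annihilates $T$. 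This gives $(\m T)^{b_i}=0$, so $\m T=0$, reducing to the classical case $T\cong\kk^r$, where $\Tor_1(\kk,M)=0$ forces $M$ free. So the gap in your proposal is genuine: either prove your two-step claim for an arbitrary surjective $\mu$ (which I do not believe follows from the data you have isolated), or replace it with an iteration along the resolution of the kind just described.
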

\begin{proof} Let $x_1,\dots,x_n$ be a minimal generating set for $\m$.  
The assumption  about $v(R)$ guarantees that the $\binom{n+1}2$ monomials of degree $2$ in the symbols $x_1,\dots,x_n$ represent a basis for the $\kk$-vector space $\m^2/\m^{3}$; see \ref{local}.(\ref{v(R)}).

Fix finitely generated $R$-modules $T$ and $M$ with $\m^2T=0$ and 
$\Tor_+^R(T,M)=0$. 
 Let
$$\mathcal F :\quad  
\cdots \xrightarrow{d_2}R^{b_1}\xrightarrow{d_1}R^{b_0}$$
be 
the minimal resolution of $M$. 
The matrix  $d_i$ is well-defined for all non-negative integers $i$; if necessary, $d_i$ is allowed to be the zero matrix. Furthermore, each  entry of each $d_i$ is in $\m$.
Write
$$d_i =\sum\limits_{j=1}^nD_{i,j}x_j,$$
where each $D_{i,j}$ is a $b_{i-1}\times b_{i}$ matrix 
with entries in $R$.
Use $d_id_{i+1} = 0$ and the hypothesis $3\le v(R)$ to see that  
\begin{align}\label{v2-(82.1)} D_{i,j} D_{i+1,j}&{}\equiv 0 \mod \m, &&\text{for all $j$, and}\\
\label{v2-(82.2)} D_{i,j_1}D_{i+1,j_2} + D_{i,j_2}D_{i+1,j_1} &{}\equiv 0 \mod \m, &&\text{for all $j_1 \neq j_2$}.\end{align}
 The hypotheses 
  $\m^2T=0$ and 
$\Tor_+^R(T,M)=0$ guarantee that 
$$(\m T)\t 
 R^{b_{i}} \subseteq \ker(T\t 
 d_{i}) = \im(T \t
 d_{i+1})$$

Fix a positive index $i$. If   $\pmb u$ is an arbitrary vector  in  $(\m T)^{b_{i}}$, then
\begin{equation}\label{v2-(3)} \pmb u =\sum _{j_0=1}^n
D_{i+1,j_0}x_{j_0}\pmb u_{j_0},\end{equation}
for some $\pmb u_1,\ldots, \pmb u_n \in T^{b_{i+1}}$.
Each $x_{j_0}\pmb u_{j_0}$ is in $(\m T)^{b_{i+1}}$;  therefore 
$$x_{j_0}\pmb u_{j_0} \in \im(T\t 
 d_{i+2})\quad\text{and}$$
\begin{equation}\label{v2-combine me}x_{j_0}\pmb u_{j_0} =
\sum_{
j_1=1}^n
D_{i+2,j_1}x_{j_1}\pmb u_{j_1,j_0},\end{equation}
for some $\pmb u_{j_1,j_0} \in  T^{b_{i+2}}$.
Combine  (\ref{v2-(3)}) and (\ref{v2-combine me}) to obtain
$$\pmb u =\sum^n_
{j_0=1}\sum^n_
{j_1=1}
D_{i+1,j_0}D_{i+2,j_1}x_{j_1}\pmb u_{j_1,j_0}.$$
In its turn,  $x_{j_1}\pmb u_{j_1,j_0} \in (\m T)^{b_{i+2}} \subseteq \im(T\t
d_{i+3})$. 
When the above procedure is iterated $k$ times, one obtains
\begin{equation}\label{v2-(4)} \pmb u =\sum_{j_0=1}^n\sum_{j_1=1}^n\cdots \sum_{j_k=1}^n 
D_{i+1,j_0}D_{i+2,j_1} \cdots D_{i+k+1,j_k}x_{j_k}\pmb u_{j_k,\ldots, j_1,j_0} ,\end{equation}
for some  $\pmb u_{j_k,\ldots, j_1,j_0}\in T^{b_{i+k+1}}$ .

Observe that if $n<k$, then 
\begin{equation}\label{v2-Apr9}D_{i+1,j_0}D_{i+2,j_1} \cdots D_{i+k+1,j_k}x_{j_k}\equiv 0,\mod \m\end{equation} for each such product which appears in (\ref{v2-(4)}). Indeed there must be  two $j$'s
which are the same, say $j_{i_1} = j_{i_2}$ for some $i_1$ and $i_2$ with $0 \leq i_1 < i_2 \leq k$.
If $i_2 = i_1 + 1$, then  equation (\ref{v2-(82.1)}) shows that (\ref{v2-Apr9}) holds. If not, use equation (\ref{v2-(82.2)})
to bring the equal $j$'s  closer together.

It now follows that $(\m T)^{b_i} = 0$, and therefore $\m T = 0$. In other words,
$T$ is a vector space $\bigoplus \kk$. The hypothesis that $\Tor_1(T,M)$ is zero now implies that $\Tor_1(\kk,M)=0$; and therefore, $M$ is a free $R$-module.
\end{proof}
\begin{remark}\label{plus one} Theorem~\ref{v2-test-module} may be applied to 
the module $B'/BB'$ of Theorem~\ref{0.2B}.
 Indeed,  $B'/BB'$  is annihilated by $(X_1,X_2,X_3)^2$ because
$$(X_1,X_2,X_3)^2B'\subseteq (X_1,X_2,X_3)^2(X_1,X_2,X_3)^2=(X_1,X_2,X_3)^4={B'}^2\subseteq BB'.$$The right most equality is established in Observation~\ref{2.2}. \end{remark} 

\begin{proposition}\label{7.2}Let $P$ be a regular ring, 
$\M$ be a maximal ideal of $P$,
 $I\subseteq \M$ be an   ideal of $P$ which is generated by a regular sequence, and $R$ be $P/J$, where $J$ is an $\M$-primary ideal of $P$  
and $v(R)$  is sufficiently large. Then $T=R/IR$ is a proj-test module for $R$  in the sense of Definition~{\rm \ref{proj-test}}.  \end{proposition}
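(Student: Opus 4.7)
The strategy is to reduce to Theorem~\ref{v2-test-module} by considering the quotient $T/\m^{2}T$. Since $\m^{2}(T/\m^{2}T) = 0$ and $v(R) \ge 3$ (a consequence of $v(R)$ being sufficiently large), Theorem~\ref{v2-test-module} already tells us that $T/\m^{2}T$ is a proj-test module for $R$. It therefore suffices to show that, for every finitely generated $R$-module $M$ with $\Tor_{+}^{R}(T, M) = 0$, the vanishing propagates to $\Tor_{+}^{R}(T/\m^{2}T, M) = 0$. The long exact Tor sequence attached to
\[
0 \to \m^{2}T \to T \to T/\m^{2}T \to 0
\]
reduces this propagation to verifying $\Tor_{+}^{R}(\m^{2}T, M) = 0$ together with the injectivity of $\m^{2}T \otimes_{R} M \to T \otimes_{R} M$.

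The next step is to analyse $\m^{2}T = (\M^{2} + I)/(I+J)$ using the regular-sequence structure of $\pmb f = f_{1},\dots,f_{c}$ generating $I$. The regularity of $\pmb f$ in $P$ provides a precise description of $\M^{2} \cap I$ in terms of $\pmb f$ and the Koszul relations, and the hypothesis $J \subseteq \M^{v(R)}$ keeps this intersection undisturbed by $J$ in the relevant low degrees. From this I would build a filtration
\[
\m^{2}T \supseteq N_{1} \supseteq N_{2} \supseteq \cdots \supseteq N_{t} = 0
\]
whose successive quotients are isomorphic either to copies of the residue field $k$ or to modules of the form $R/I'R$, where $I' \subsetneq I$ is generated by a proper sub-sequence of $\pmb f$.

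The final step propagates Tor-vanishing along this filtration by induction on the length $c$ of the regular sequence. At each stage, the long exact sequence of Tor, together with the fact that $k$ is a proj-test module for every local Noetherian ring and the inductive hypothesis applied to the smaller modules $R/I'R$, yields vanishing of $\Tor_{+}^{R}(N_{i}/N_{i+1}, M)$, and hence $\Tor_{+}^{R}(N_{i}, M) = 0$, all the way up to $i = 0$. This gives $\Tor_{+}^{R}(\m^{2}T, M) = 0$; the injectivity $\m^{2}T \otimes_{R} M \hookrightarrow T \otimes_{R} M$ then follows from the same long exact Tor sequence and $\Tor_{1}^{R}(T/\m^{2}T, M) = 0$.

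The principal obstacle is the middle step: constructing the filtration of $\m^{2}T$ with precisely the quotients the induction demands. This is where the regular-sequence hypothesis enters essentially, via the Koszul identities among $f_{1},\dots,f_{c}$, and where the quantitative form of ``$v(R)$ sufficiently large'' is forced upon us, since $v(R)$ must dominate the combined degrees of the $f_{i}$ together with the depth of the Koszul filtration used in the induction.
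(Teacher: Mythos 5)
Your reduction runs in the wrong logical direction and is ultimately circular. From the short exact sequence $0 \to \m^{2}T \to T \to T/\m^{2}T \to 0$, the hypothesis $\Tor_{+}^{R}(T,M)=0$ gives no control over $\Tor_{+}^{R}(T/\m^{2}T,M)$ unless you already know $\Tor_{+}^{R}(\m^{2}T,M)=0$: Tor-vanishing does not pass from a module to its submodules or subquotients. Your plan for establishing $\Tor_{+}^{R}(\m^{2}T,M)=0$ is to filter $\m^{2}T$ with subquotients isomorphic to $\kk$ and to $R/I'R$, and then to invoke the proj-test property of $\kk$ and (inductively) of $R/I'R$. But the proj-test property is the implication ``$\Tor_{+}$ vanishes $\Rightarrow$ free''; it cannot be used to \emph{produce} the vanishing $\Tor_{+}^{R}(\kk,M)=0$. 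Indeed $\Tor_{+}^{R}(\kk,M)=0$ holds if and only if $M$ is free, which is precisely the conclusion you are trying to reach, so the filtration step assumes the theorem. Even granting the existence of the filtration (which you only sketch, and which is itself doubtful for $(\M^{2}+I+J)/(I+J)$), the induction never closes: at each stage you would need Tor-vanishing against the subquotients as an input, and the hypothesis $\Tor_{+}^{R}(T,M)=0$ never supplies it. For the same reason the final ``injectivity of $\m^{2}T\otimes_{R}M\to T\otimes_{R}M$'' cannot be extracted from the data at hand.

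The paper's proof is of a completely different nature and does not reduce to Theorem~\ref{v2-test-module}. After localizing at $\M$ and inflating the residue field, one chooses by Artin--Rees an integer $n_{0}$ with $I\cap\M^{n_{0}}\subseteq I\M$ and requires $n_{0}<v(R)$. Given $M$ with $\Tor_{+}^{R}(T,M)=0$, the Auslander--Buchsbaum formula reduces the problem to showing that $M$ has finite projective dimension over $R$. One lifts the minimal $R$-free resolution of $M$ to maps of free $P$-modules and tensors with $T$; the Tor-vanishing shows the result is the minimal $T$-free resolution of $M\otimes_{R}T$, and the Eisenbud--Peeva higher matrix factorization structure of minimal resolutions over the complete intersection forces, far out in the resolution, some entries of the product $d_{i+1}d_{i+2}$ to be minimal generators of $I$; over $R$ this product is zero, while the hypothesis $n_{0}<v(R)$ guarantees that minimal generators of $I$ are nonzero in $R$ --- a contradiction unless the resolution is finite. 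If you want a transfer-style argument instead, you need a direct-summand relationship (as in Observation~\ref{2.13}), not a filtration; summands do transfer Tor-vanishing, filtrations do not.
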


\begin{proof} It does no harm to localize $P$ at $\M$ 
 and  to inflate the residue field of $P_{\M}$ in order to assume that the inflated residue field is infinite; see, for example \cite[Ch. 0, 10.3.1]{EGAIII}. 
We alter the notation to make it less cumbersome.
In the new notation, 
$(P,\M,\kk)$ is 
a regular local ring with $\kk$  infinite, $I$ 
is  an   ideal of $P$ which is generated by a regular sequence, and $R$ 
is an Artinian  quotient of $P$ with $v(R)$ sufficiently large.

The Artin-Rees Lemma guarantees the existence of an integer $n_0$ such that $$I\cap \M^{n_0}\subseteq I\M.$$ In this proof, we insist that $n_0<v(R)$.

 Let $M$ be a finitely generated $R$-module with $\Tor_i^R(M,T)=0$ for all positive $i$. The ring $R$ and the $R$-module $M$ both have depth zero; consequently, according to the Auslander-Buchsbaum formula, to show that $M$ is a free $R$-module, it suffices to show that $M$ has finite projective dimension as an $R$-module.  We assume that $M$ has infinite projective dimension and we draw a contradiction.

Let $$F:\quad
\dots \xrightarrow{d_3} F_2\xrightarrow{d_2} F_1\xrightarrow{d_1} F_0$$ be a collection of $P$-module homomorphisms of free $P$-modules with the property that
${F\t_PR}$ is the minimal $R$-resolution of $M$. The hypothesis about the vanishing of $\Tor^R_+(M,T)$ guarantees that  
$(F\t_PR)\t_RT=F\t_PT$ is the minimal resolution of $M\t_RT$ by free $T$-modules. 

The ring $T$ is a complete intersection, the ring $P$ is regular local, and the residue field $\kk$ of $P$ is infinite;  hence, the Eisenbud-Peeva \cite{EP} theorems apply to $F\t_PT$. In particular, if $i$ is sufficiently large, then the syzygy module $\im(d_i\t_PT)$ in $F\t_PT$ is a Higher Matrix Factorization module \cite[Theorem 1.3.1]{EP}; and therefore, when suitable bases are chosen in $F$, some of the entries of the product matrix $d_{i+1}d_{i+2}$ are minimal generators of $I$. The hypothesis that $n_0<v(R)$ guarantees that every minimal generator of $I$ represents a non-zero element of $R$. On the other hand, $F\t_PR$ is a complex and $d_{i+1}d_{i+2}\t_P R=0$. We have reached a contradiction
$$0\neq d_{i+1}d_{i+2}\t_P R=0;$$
and this completes the proof. 
\end{proof}

\begin{remark}\label{plus two} Proposition~\ref{7.2} may be applied to 
the module $B/B^2$ of Theorem~\ref{0.2A}.
 Indeed,  the fact that the ideal $B$ of $P$ is generated by a regular sequence ensures that $B/B^2$ is a free $P/B$ module of rank $c$, where $c$ is the minimal number of generators of $B$; hence, as an $R$-module,
$$\frac B{B^2}=\frac B{B^2}\t_P R=\left(\frac PB\right)^c\t_P R=\left(\frac R{BR}\right)^c.$$ 
 \end{remark} 

\medskip Theorem~\ref{Main Result} is the main result of the paper. Recall that if  $R=S/J$ is a quotient of the ring $S$, then $c_S(R)$ is the length of $J$ as an $S$-module. Also, recall  the parameter $v(S)$ from \ref{local}.(\ref{v(R)}). 
\begin{theorem}\label{Main Result} Let $(S,\n,\kk)$ be an equicharacteristic  Artinian Gorenstein local ring with embedding dimension at least two, 
 and  $R$ be the ring $R=S/J$ for some proper non-zero 
 ideal   $J$ of $S$. Assume that either
\begin{itemize}
\item $1\le c_{S}(R)\le 4$, or
\item $c_S(R)=5$ and $S$ is a standard-graded algebra over a field $[S]_0$.
\end{itemize}
If the parameter  $v(S)$ 
 is sufficiently large, then $R$ is $G$-regular.
\end{theorem}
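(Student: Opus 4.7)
The plan is to assemble the apparatus built up over the course of the paper. Given the hypotheses on $R$, Theorem~\ref{mar-16-17} tells us that, once $v(S)$ is taken large enough to clear the numerical thresholds in all its cases, one of the three Splitting Theorems—Theorem~\ref{(0.1)}, Theorem~\ref{0.2A}, or Theorem~\ref{0.2B}—applies to $R$. Each of these produces an explicit direct summand $T$ of $\syz_2^R(\omega_R)$: namely $T=R/\m$ when Theorem~\ref{(0.1)} is invoked, $T=B/B^2$ when Theorem~\ref{0.2A} is invoked, and $T=B'/BB'$ when Theorem~\ref{0.2B} is invoked.

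Next I would check that in each case the summand $T$ is a proj-test module for $R$, in the sense of Definition~\ref{proj-test}. The modules $T=R/\m$ and $T=B'/BB'$ are both annihilated by $\m^2$—the former tautologically, and the latter by Remark~\ref{plus one}—so Theorem~\ref{v2-test-module} applies as soon as $3\le v(R)$. For the module $T=B/B^2$ coming from Theorem~\ref{0.2A}, Remark~\ref{plus two} identifies $T$ with a direct sum of copies of $R/BR$; since $B$ is generated by a $P$-regular sequence and $P/A=S$ is Artinian Gorenstein, Proposition~\ref{7.2} delivers the proj-test property, again provided $v(R)$ is large enough.

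Having produced a proj-test summand $T$ of $\syz_2^R(\omega_R)$, I would invoke Observation~\ref{2.13}, which converts precisely this data into the conclusion that $R$ is $G$-regular. Combining the three steps yields the theorem.

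The main obstacle, such as it is, is not mathematical but bookkeeping: one must verify that ``$v(S)$ sufficiently large'' can be chosen to simultaneously satisfy every lower bound on $v(S)$ appearing in the various clauses of Theorem~\ref{mar-16-17} and every largeness hypothesis needed to apply Theorem~\ref{v2-test-module} or Proposition~\ref{7.2} \emph{to $R$} rather than to $S$. Since $c_S(R)\le 5$ is bounded and the associated graded rings of $S$ and $R$ agree in the initial degrees controlled by $v(S)$ and $\lambda_S(J)$, the invariant $v(R)$ grows together with $v(S)$ (up to a bounded shift), so choosing $v(S)$ larger than a uniform constant forces $v(R)$ to exceed $3$ (and indeed any needed threshold). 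With this observation absorbed into ``sufficiently large,'' the three step argument above completes the proof.
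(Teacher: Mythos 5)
Your proposal matches the paper's own proof: combine Theorem~\ref{mar-16-17} with the splitting Theorems~\ref{(0.1)}, \ref{0.2A}, \ref{0.2B} to obtain a summand of $\syz_2^R(\omega_R)$, verify it is a proj-test module via Theorem~\ref{v2-test-module} (with Remark~\ref{plus one}) and Proposition~\ref{7.2} (with Remark~\ref{plus two}), and conclude with Observation~\ref{2.13}. Your closing remark that $v(R)$ grows with $v(S)$ up to a bounded shift (since $\lambda_S(J)=c_S(R)\le 5$) is a reasonable way to absorb the ``sufficiently large'' bookkeeping that the paper leaves implicit.
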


\begin{proof}Apply Theorem~\ref{mar-16-17} to see that one of the Theorems \ref{(0.1)}, \ref{0.2A}, or  \ref{0.2B} applies to $R$. These theorems ensure that either $\kk$, ``$B/B^2$'', or ``$B'/BB'$'' is a direct summand of $\syz_2^R(\omega_R)$. The module $\kk$ is the prototype of a proj-test module. It is shown in Theorem~\ref{v2-test-module} (together with Remark~\ref{plus one}) and Proposition~\ref{7.2} (together with Remark~\ref{plus two}) that ``$B/B^2$'' and ``$B'/BB'$'' are proj-test modules. Apply Observation~\ref{2.13} to complete the proof. \end{proof}


\begin{thebibliography}{99}

\bibitem{AB} M.~Auslander and M.~Bridger, {\em
Stable module theory}  Mem. Amer. Math. Soc.  (1969), no. 94.

\bibitem{AM}L.~Avramov and A.~Martsinkovsky, {\em
Absolute, relative, and Tate cohomology of modules of finite Gorenstein dimension}, 
Proc. London Math. Soc. (3) {\bf 85} (2002),  393--440. 

\bibitem{Ba63} H.~Bass, {\em
On the ubiquity of Gorenstein rings},
Math. Z. {\bf 82} (1963), 8--28. 


\bibitem{BH} W.~Bruns and J.~Herzog, {\em Cohen-Macaulay rings}, Cambridge Studies in Advanced Mathematics {\bf 39}, Cambridge University Press, Cambridge, 1993,

\bibitem{CDT}O.~Celikbas, H.~Dao, and R.~Takahashi, {\em
Modules that detect finite homological dimensions},
Kyoto J. Math. {\bf 54} (2014),  295--310. 

\bibitem{C-SW}O.~Celikbas and S.~Sather-Wagstaff, {\em
Testing for the Gorenstein property},
Collect. Math. {\bf 67} (2016),  555--568. 

\bibitem{C}K.~Conrad, {\em Expository paper on bilinear forms}, \newline 
{\tiny{\tt http://www.math.uconn.edu/}
$\sim${\tt kconrad/blurbs/linmultialg/bilinearform.pdf}.}


\bibitem{EP}D.~Eisenbud and I.~Peeva, 
{\em Minimal free resolutions over complete intersections},
Lecture Notes in Mathematics, {\bf 2152}, Springer, Cham, 2016. 



\bibitem{GS}
D.~R.~Grayson, M.~E.~Stillman, \textit{Macaulay2, a software system for research 
                   in algebraic geometry}, Available at {http://www.math.uiuc.edu/Macaulay2/}.

\bibitem{EGAIII} A. Grothendieck,  \'El\'ements de g\'eom\'etrie alg\'ebrique III. \'Etude cohomologique des faisceaux
coh\'erents, Publ. Math. IHES {\bf 11} (1961).

\bibitem{KU}A.~Kustin and B.~Ulrich, {\em A family of complexes associated to an almost alternating map, with applications to residual intersections}, Mem. Amer. Math. Soc. {\bf 95} (1992), no. 461.

\bibitem{M14}J.~Majadas, {\em
On test modules for flat dimension}, 
J. Algebra Appl. {\bf 13} (2014), no. 3, 1350107. 


\bibitem{RS} M.~E.~Rossi and  L.~M.~\c{S}ega, {\em
Poincar\'e series of modules over compressed Gorenstein local rings}, 
Adv. Math. {\bf 259} (2014), 421--447. 


\bibitem{SV}J.~Striuli and A.~Vraciu, {\em Some homological properties of almost Gorenstein rings}, Commutative algebra and its connections to geometry, 201--215, Contemp. Math., {\bf 555}, Amer. Math. Soc., Providence, RI, 2011. 

\bibitem{T08} R.~Takahashi, {\em On $G$-regular local rings}, Comm. Algebra {\bf 36} (2008),  4472--4491.

\end{thebibliography}
\end{document}